\setlist[itemize]{leftmargin=5.5mm}
\renewcommand*{\eqref}[1]{%
  \hyperref[{#1}]{\textup{\tagform@{\ref*{#1}}}}%
}
\numberwithin{equation}{section}
\newtheorem{thm}{Theorem}[section]
\newtheorem{defn}[thm]{Definition}
\newtheorem{prop}[thm]{Proposition}
\newtheorem{cor}[thm]{Corollary}
\newtheorem{lem}[thm]{Lemma}
\newtheorem{rmk}[thm]{Remark}
\theoremstyle{definition}
\DeclareMathOperator{\fX}{\mathfrak{X}}
\DeclareMathOperator{\E}{\mathbb{E}}
\DeclareMathOperator{\bS}{\mathbb{S}}
\DeclareMathOperator{\N}{\mathbb{N}}
\DeclareMathOperator{\R}{\mathbb{R}}
\DeclareMathOperator{\cF}{\mathcal{F}}
\DeclareMathOperator{\cG}{\mathcal{G}}
\DeclareMathOperator{\cP}{\mathcal{P}}
\DeclareMathOperator{\cA}{\mathcal{A}}
\DeclareMathOperator{\cL}{\mathcal{L}}
\DeclareMathOperator{\cB}{\mathcal{B}}
\DeclareMathOperator{\fe}{\mathfrak{e}}
\DeclareMathOperator{\fE}{\mathfrak{E}}
\DeclareMathOperator{\bP}{\mathbb{P}}
\DeclareMathOperator{\C}{\mathbb{C}}
\newcommand{\pd}[2]{\frac{\partial #1}{\partial #2}}
\newcommand{\pdsup}[3]{\frac{\partial^{#3} #1}{\partial #2^{#3}}}
\newcommand{\der}[2]{\frac{d #1}{d #2}}
\newcommand{\dersup}[3]{\frac{d^{#3} #1}{d #2^{#3}}}
\newcommand{\Norm}[2]{\left\Vert #1 \right\Vert_{#2}}
\def\l@subsection{\@tocline{2}{0pt}{2.5pc}{}{}}
\def\l@subsubsection{\@tocline{2}{0pt}{5pc}{}{}}
\title[Time Changed Spherical Brownian Motions with Drifts]{Time Changed Spherical Brownian Motions\\ with Longitudinal Drifts}
\author[G. Ascione]{Giacomo Ascione$^\ast$}
\address{$^\ast$ Scuola Superiore Meridionale}
\email{g.ascione@ssmeridionale.it}
\author[A. Vidotto]{Anna Vidotto$^\diamond$}
\address{$^\diamond$ Dipartimento di Matematica e Applicazioni, Università di Napoli Federico II}
\date{\today}
\email{anna.vidotto@unina.it}
\begin{document}
	\maketitle
	\begin{abstract}
		In this paper, we consider the time change of the diffusion process on the 2-dimensional unit sphere generated by the Laplace-Beltrami operator, perturbed by means of a longitudinal vector field. First, this is done by addressing the problem of finding strong solutions to suitable time-nonlocal Kolmogorov equations, via a spectral decomposition approach. Next, the desired process is constructed as the composition of the aforementioned diffusion process and the inverse of a subordinator, and it is used to provide a stochastic representation of the solution of the involved time-nonlocal Kolmogorov equation, which in turn leads to the spectral decomposition of its probability density function. 
A family of operators induced by the process is then adopted to provide very weak solutions of the same time-nonlocal Kolmogorov equation with much less regular initial data. From the spectral decomposition results we also get some bounds on the speed of convergence to the stationary state, proving that the process can be considered an anomalous diffusion. These results improve some known ones in terms of both the presence of a perturbation and the lack of regularity of the initial data.  \\
{\bf Keywords:} Spherical harmonic; Anomalous diffusion; Time-nonlocal Kolmogorov equation; Subordinator.\\
{\bf MSC 2020 classification:} 35R11, 60K15, 60J60, 33C55.
	\end{abstract}

	\section{Introduction}
In recent years, anomalous diffusions have been observed in several physical and biological phenomena. The first instance of anomalous diffusion was found in \cite{richardson1926atmospheric} in the context of turbulent flows. Sub-diffusions have been further observed, for instance, in the motion of the mRNA inside live \textit{E. Coli} cells \cite{golding2006physical} and, more in general, in the motion of tracer particles in the cytoplasm of living mammals cells \cite{sabri2020elucidating}. Power-like relaxation has been also observed in micellar solutions \cite{jeon2013anomalous}, while superdiffusive behaviours have been noticed in the motion of tracer particles transported by molecular motors \cite{bruno2009transition}. For a survey on anomalous diffusions, we refer to \cite{metzler2014anomalous}.

Power-like mean square displacements and relaxation dynamics have been modelled by means of fractional differential equations. Fractional calculus has been developed since the end of the Seventeenth Century, but the use of fractional derivatives \textit{exploded} the last century, starting from \cite{caputo1967linear}, see, just for example \cite{mainardi1996fractional,barkai2001fractional} and references therein. The first instance of the connection between stochastic processes and time-fractional equations has been observed in \cite{saichev1997fractional}, where the solution is obtained by means of a Bochner-like subordination, i.e. by composing the Feller process generated by the diffusion operator (which was a L\'evy stable motion) with the inverse of an independent stable subordinator. This result has been generalized in \cite{baeumer2001stochastic}, by considering time-fractional abstract Cauchy problems and applying the time-change directly to the semigroup generated by the considered operator. Furthermore, thanks to the recognition of the Mittag-Leffler function as both the Laplace transform of the inverse stable subordinator \cite{bingham1971limit} and the eigenfunction of the Caputo fractional derivative, in \cite{leonenko2013fractional} the authors provided a spectral decomposition formula to recognize the strong solutions of a family of time-fractional partial differential equations related to processes of the Pearson diffusion family. Let us stress that processes that are time-changed by means of inverse stable subordinators usually exhibit the desired power-law relaxation property, see \cite{ascione2023uniform} and references therein. For further details on stochastic models for fractional differential equations we refer to \cite{meerschaert2019stochastic}.

However, phenomena like non-power-law relaxation property and mean square displacement asymptotics have been also observed. This is the case, for instance, of colloidal liquid-glass transition \cite{sperl2005nearly}, that exhibits a ultraslow behaviour, i.e. the mean square displacement is logarithmic in time, and whose stochastic model has been provided e.g.~in \cite{meerschaert2006stochastic} (see also \cite{liang2019survey} for other examples). Unfortunately, the power-like asymptotic of the Mittag-Leffler function is not enough to describe such logarithmic relaxation phenomena, hence it is necessary to consider a more general setting than the one provided by fractional calculus. A \textit{general fractional calculus} was introduced in \cite{kochubei2011general}, while the connection between these newly introduced Caputo-like derivatives and time-changed semigroups (and Feller processes) has been investigated in \cite{toaldo2015convolution,chen2017time}. Despite we do not have an explicit expression for the Laplace transform  and the moment generating function of a general inverse subordiator, these quantities are still shown to be eigenfunctions of the considered time-nonlocal operators, see \cite{meerschaert2019relaxation,ascione2021abstract}. 
 Thanks to this observation, the spectral decomposition methods first introduced in \cite{leonenko2013fractional} can be extended to the general fractional calculus, as done for instance in \cite{ascione2021time,ascione2022non2}.

The spectral decomposition strategy presented in \cite{leonenko2013fractional,ascione2021time,ascione2022non2} for Pearson diffusions and some special birth-death processes relied on the knowledge of the eigenfunctions and the eigenvalues of their generators. Another interesting case in which the spectrum is fully known is the Laplace-Beltrami operator on a compact manifold without boundary, and a prototype of such a setting is the two-dimensional unit sphere $\bS^2$. Diffusions on the sphere are used, for example, to smooth 3D surfaces \cite{Bu:04} (in particular, via the parabolic smoothing property) or to represent migration via particle motion models \cite{brillinger1998elephant}.
Fractional diffusive behaviours on $\bS^2$ have been studied in \cite{d2014time,DLO:16} (and extended to general compact manifolds in \cite{d2016fractional}), while in the general fractional calculus setting a first approach has been considered in \cite{d2022models}. In these works, the underlying stochastic processes are constructed through the time change and the subordination of a standard spherical Brownian motion, as introduced in \cite{yosida1949brownian}.

In this paper, we consider a further generalization of this spherical model; precisely, we consider the case of a spherical Brownian motion with longitudinal drift, see \cite[\S 7]{LGY:86}. This is motivated by the possibility of modeling the diffusion on the sphere of particles that are subjected to longitudinal currents or winds which force their direction, as for instance describing \textit{preferential routes} in migration models. As a \textit{side effect}, we improve the results in \cite{d2022models,d2014time}, by providing strong solutions of the involved time-nonlocal equations for less regular initial data.

In particular, we will consider the following time-nonlocal Cauchy problem, also called time-nonlocal Kolmogorov equation:
\begin{equation}\label{eq:bKintro}
	\begin{cases}
		\displaystyle \dersup{u}{t}{\Phi}(t)=(\Delta_{\bS^2}+X_\mu) u(t) & t>0, \\
		u(0)=f,
	\end{cases}
\end{equation}
where $\dersup{}{t}{\Phi}$ is a general fractional derivative operator (see Definition \ref{defn:strongder}) defined starting from the Laplace exponent of a subordinator (see Definition \ref{defn:subordinator}), $\Delta_{\bS^2}$ is the Laplacian on the unit two-dimensional sphere $\bS^2$, see \eqref{eq:Laplacian}, and $X_\mu$ is a longitudinal drift with intensity $\mu$ (see Definition \ref{def:drift}), that is $X_\mu=\mu \frac{\partial}{\partial \varphi}$ once we use the standard spherical coordinate system where $\varphi \in [0,2\pi)$ is the longitude. For $\mu=0$ we get a specific case of the equation considered in \cite{d2022models}, which reduces, if we consider stable subordinators, to the fractional diffusion equation on the sphere considered in \cite{d2014time}. 


The main results of the paper are as follows. In Theorem \ref{thm:specdecKolm} we provide an explicit strong solution to \eqref{eq:bKintro} via spectral decompositions results for any initial data $f \in H^s(\bS^2)$, $s>1$, that is  
	$f \in L^2(\bS^2)$ and
	\begin{equation*}
		\sum_{\ell=0}^{+\infty}\sum_{m=-\ell}^{\ell}|f_{\ell,m}|^2\ell^{2s}<+\infty,
	\end{equation*}
	thus improving the results in \cite{d2014time,d2022models} in the case of $\mu=0$, in which the proofs required $s>3$. Let us stress that we cannot do better than this, as we show with the example in Proposition \ref{prop:counter}.
Furthermore, in Section \ref{sec:sphericalB}, we study in detail the time-changed spherical Brownian motion with longitudinal drift, that we denote by $W_\Phi^\mu$. This allows us to provide a stochastic representation result for strong solutions of \eqref{eq:bKintro}, together with an explicit form of the fundamental solution of such equation. Limit and stationary distributions are also provided.
	In particular, in Proposition \ref{prop:stat} we prove the ergodicity in total variation of the process $W_\Phi^\mu$ while in Proposition \ref{prop:speed} we provide a result on the speed of convergence to the stationary state in a Fortet-Mourier-type metric: in the case $\mu=0$, we have a uniform non-exponential upper and lower bound, that implies the anomalous diffusive behavior of $W_\Phi^0$. For $\mu\ne0$, we still have an anomalous diffusive behavior but we are not able to provide a uniform lower bound if the initial position is exactly on the equator.
Finally, in Theorem \ref{thm:specdecgen}, thanks to the extension of the time-changed semigroup to $L^2(\bS^2)$, we are able to provide explicit very weak solutions, in the sense of \cite{Chen2018}, to \eqref{eq:bKintro} for any initial data $f \in L^2(\bS^2)$, thus definitely relaxing the smoothness assumptions.

\subsection*{Plan of the paper}
Section \ref{sec:PR} is devoted to the preliminary results that are needed in order to understand our model and the objective of the present paper: we first define the Brownian motion with longitudinal drift on the sphere, Section \ref{subs:BMdrift}, and then we recall basic notions related to subordinators, inverse subordinators and Caputo-type nonlocal operators, Section \ref{subs:subecc}. 
In Section \ref{sec:MR} we present our main results. In particular, in Section \ref{sec:strongsol} we obtain the strong solutions of the time-nonlocal Kolmogorov equation together, while in Section \ref{sec:sphericalB} we proceed with an accurate study of the time-changed spherical Brownian motion with longitudinal drift, which is the stochastic representation of such strong solutions. In Section \ref{subs:weak} we are able to provide some very weak solutions of the time-nonlocal Kolmogorov equation under extremely mild assumptions on the initial data. Most of the proofs can be found immediately after the statements, however the more technical or standard ones have been placed in the appendix.

\subsection*{Acknowledgements}
The authors are members of INdAM-GNAMPA. The authors wish to thank Enrica Pirozzi for useful discussions. 
G.A.~has been supported by PRIN 2022 project 2022XZSAFN. A.V.~has been supported by the co-financing of the European Union - FSE-REACT-EU, PON Research and Innovation 2014--2020, DM 1062/2021.

\section{Preliminary results}\label{sec:PR}
	\subsection{The Brownian motion with longitudinal drift on the sphere}\label{subs:BMdrift}
	\subsubsection{Longitudinal drifts and spherical harmonics}
	Let us denote by $\bS^2$ the $2$-dimensional unit sphere, equipped with the Riemannian metric $g$ obtained by push-forward of the Euclidean metric via spherical coordinates. Since we will work with complex valued functions and their gradients, let us extend the Riemannian metric $g$ as a symmetric bilinear form on the complexification $T^{\C}\bS^2$ of the tangent bundle on $\bS^2$. We denote the smooth manifold equipped with the structure that determines $g$ as $(\bS^2,g)$. We say that $\cA$ is an atlas on $(\bS^2,g)$ if it is an atlas on $\bS^2$ determining the same smooth structure as in $(\bS^2,g)$. Let $\fX(\bS^2)$ be the set of smooth vector fields on $\bS^2$.
	\begin{defn}\label{def:drift}
		We say that a smooth vector field $X \in \fX(\bS^2)$ is a \textit{longitudinal drift} if and only if:
		\begin{itemize}
			\item There exists an orthonormal laevorotatory coordinate system $\{O,\{\mathbf{e}_1,\mathbf{e}_2,\mathbf{e}_3\}\}$ on the Euclidean space $E^3$ such that $\bS^2=\{(x_1,x_2,x_3) \in E^3, x_1^2+x_2^2+x_3^2=1\}$;
			\item For any atlas $\cA$ on $(\bS^2,g)$, consider 
			\begin{equation}\label{eq:U1pm}
				U_\pm=\{(x_1,x_2,x_3)\in \bS^2: \ \pm x_1>0\}
			\end{equation}
		and the functions $\phi_{+}:U_\pm \to (0,\pi)\times (0,\pi)$ and $\phi_{-}:U_\pm \to (0,\pi)\times (-\pi,0)$ such that
			\begin{equation*}
				\phi_{\pm}^{-1}(\theta,\varphi)=(\sin(\theta)\cos(\varphi),\sin(\theta)\sin(\varphi),\cos(\theta)).
			\end{equation*}
		Then $\cA^{\prime}=\cA \cup\{(U_{\pm},\phi_\pm)\}$ is still an atlas on $(\bS^2,g)$ and there exists $\mu \in \R$ with $\mu \not = 0$ such that on $U_{\pm}$ it holds
		\begin{equation}\label{eq:longdrift}
			X_{|U_{\pm}}=\mu \pd{}{\varphi_1}.
		\end{equation}
		\end{itemize}
	In such a case, we say that $X$ is a longitudinal drift with intensity $\mu$ with respect to the orthonormal coordinate system $\{O,(\mathbf{e}_1,\mathbf{e}_2,\mathbf{e}_3)\}$.
\end{defn}
Clearly, one could ask whether a longitudinal drift actually exists. This question is addressed in the following proposition.
	\begin{prop}\label{prop:existlongdrift}
		For any $\mu \not = 0$ and any orthonormal laevorotatory coordinate system $\{O,(\mathbf{e}_1,\mathbf{e}_2,\mathbf{e}_3)\}$ there exist a unique longitudinal drift $X \in \fX(\bS^2)$ with respect to such a coordinate system and intensity $\mu$.
	\end{prop}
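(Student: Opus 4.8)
The plan is to prove existence by exhibiting an explicit candidate built from the ambient rotation field and to prove uniqueness by a density argument. Fix the orthonormal laevorotatory system $\{O,(\mathbf{e}_1,\mathbf{e}_2,\mathbf{e}_3)\}$ and consider on the ambient Euclidean space $E^3$ the vector field $Y=\mu\left(x_1\frac{\partial}{\partial x_2}-x_2\frac{\partial}{\partial x_1}\right)$, namely $\mu$ times the infinitesimal generator of the rotations about the $\mathbf{e}_3$-axis. This $Y$ is manifestly smooth on $E^3$, and identifying it with the $E^3$-vector $(-\mu x_2,\mu x_1,0)$ we get $\langle Y,(x_1,x_2,x_3)\rangle=0$ at every point, so $Y$ is everywhere tangent to $\bS^2$; hence its restriction $X:=Y_{|\bS^2}$ is a smooth vector field, $X\in\fX(\bS^2)$. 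To check the defining property \eqref{eq:longdrift}, I would differentiate $\phi_\pm^{-1}(\theta,\varphi)=(\sin\theta\cos\varphi,\sin\theta\sin\varphi,\cos\theta)$ with respect to $\varphi$: the coordinate field $\frac{\partial}{\partial\varphi}$, read in the ambient Cartesian frame, equals $-\sin\theta\sin\varphi\,\frac{\partial}{\partial x_1}+\sin\theta\cos\varphi\,\frac{\partial}{\partial x_2}=x_1\frac{\partial}{\partial x_2}-x_2\frac{\partial}{\partial x_1}$, so that $X_{|U_\pm}=\mu\frac{\partial}{\partial\varphi}$ on each chart, as required. Since $\phi_\pm$ are the standard spherical parametrizations, they are smoothly compatible with any atlas $\cA$ inducing the smooth structure of $(\bS^2,g)$, whence $\cA'=\cA\cup\{(U_\pm,\phi_\pm)\}$ is genuinely an atlas and $X$ is a longitudinal drift of intensity $\mu$.

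For uniqueness, suppose $X$ and $\tilde X$ are both longitudinal drifts of intensity $\mu$ with respect to the same coordinate system. By definition each of them restricts to $\mu\frac{\partial}{\partial\varphi}$ on $U_+$ and on $U_-$, so they coincide on $U_+\cup U_-=\bS^2\setminus\{x_1=0\}$. This is an open dense subset of $\bS^2$, and since a smooth vector field is in particular a continuous section of $T\bS^2$, agreement on a dense set forces $X=\tilde X$ everywhere on $\bS^2$.

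I expect the only genuinely delicate point to be the behaviour across the great circle $\{x_1=0\}$ that is left uncovered by $U_+\cup U_-$; this circle contains the two $\mathbf{e}_3$-poles $(0,0,\pm1)$, where $\frac{\partial}{\partial\varphi}$ degenerates and the charts $\phi_\pm$ break down, so smoothness of the extension there cannot be read off the chart expression alone. This is exactly what the ambient representation $Y=\mu\left(x_1\frac{\partial}{\partial x_2}-x_2\frac{\partial}{\partial x_1}\right)$ resolves: it is globally smooth on $E^3$ and vanishes precisely at the poles, thereby furnishing simultaneously the smooth extension required for existence and, via the density argument, the rigidity required for uniqueness.
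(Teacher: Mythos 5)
Your proof is correct, but it takes a genuinely different route from the paper. The paper works entirely in chart calculus: it introduces the six graph charts $(U^i_{\pm},\phi^i_{\pm})$ with $\phi^i_{\pm}$ inverting to $(x,y)\mapsto(\pm\sqrt{1-x^2-y^2},x,y)$ (and its permutations), determines the candidate field on the overlaps $U^i_{\pm}\cap U_{\pm}$ by the change-of-variables formula (obtaining, e.g., $X_{|U^1_+}=-\mu\sqrt{1-y_1^2}\,\pd{}{x_1}$ in graph coordinates), and then extends chart by chart, omitting the remaining verifications as ``easy but tedious computations.'' You instead realize the drift extrinsically as the restriction to $\bS^2$ of the ambient rotation generator $Y=\mu\bigl(x_1\pd{}{x_2}-x_2\pd{}{x_1}\bigr)$: tangency is the one-line computation $\langle Y(x),x\rangle=0$, smoothness of $X=Y_{|\bS^2}$ is inherited from smoothness of $Y$ on $E^3$ (a smooth ambient field tangent along an embedded submanifold restricts to a smooth section of its tangent bundle), and the required local form $X_{|U_\pm}=\mu\pd{}{\varphi}$ follows from a single pushforward computation on the spherical parametrization. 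Your uniqueness argument (two longitudinal drifts agree on the open dense set $U_+\cup U_-$, hence everywhere by continuity of sections) is also cleaner and more complete than the paper's appeal to change-of-variables on chart intersections, which implicitly needs exactly such a density/continuity step to force the extension across the uncovered great circle. What each approach buys: yours eliminates all the tedious chart computations and makes the smooth extension across the poles transparent; the paper's stays intrinsic and produces explicit coordinate expressions of $X$ in the graph charts, which can be convenient for later local computations but are not needed elsewhere in the paper.
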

	\begin{proof}
		Let $\mu \not = 0$ and $\{O,(\mathbf{e}_1,\mathbf{e}_2,\mathbf{e}_3)\}$ be an orthonormal laevorotatory coordinate system on $E^3$. It is clear that if a longitudinal drift, with respect to such a coordinate system and with intensity $\mu$, exists, Equation \eqref{eq:longdrift} must hold true. To prove the statement, we just need to exhibit an atlas $\cA$ on $(\bS^2,g)$ and to show that, applying the appropriate change of variables formula, Equation \eqref{eq:longdrift} can be extended to a global vector field on $(\bS^2,g)$ in a unique way. To define such an atlas on $(\bS^2,g)$, we set
	\begin{equation*}
		U^i_{\pm}=\{(x_1,x_2,x_3)\in \bS^2: \ \pm x_i>0\}.
	\end{equation*} 
	Clearly, $U^1_{\pm}=U_{\pm}$ as in Equation \eqref{eq:U1pm}. Let $B_1=\{(x,y) \in \R^2: \ x^2+y^2<1\}$. Then we can define $\phi^i_{\pm}:U^i_{\pm} \to B_1$ by setting
	\begin{align*}
		(\phi^1_{\pm})^{-1}(x,y)&=(\pm \sqrt{1-x^2-y^2},x,y)\\
		(\phi^2_{\pm})^{-1}(x,y)&=(x,\pm \sqrt{1-x^2-y^2},y)\\
		(\phi^3_{\pm})^{-1}(x,y)&=(x,y,\pm \sqrt{1-x^2-y^2})
	\end{align*}
	It is easy to show that the atlas $\cA=\{(U^i_{\pm},\phi^i_{\pm}),i=1,2,3\}$ satisfies the assumptions in Definition \ref{def:drift}, i.e., that $\cA^\prime=\cA \cup \{(U_{\pm}, \phi_{\pm})\}$ is still an atlas. If we have a global vector field $X\in\mathfrak{X}(\bS^2)$ that satisfies  $(U_{\pm}, \phi_{\pm})$ we know that any global vector field $X\in\mathfrak{X}(\bS^2)$ satisfying Definition \ref{def:drift}, then it must be defined locally on $(U_{\pm}, \phi_{\pm})$ by \eqref{eq:longdrift} and must satisfy the usual change of variables formulae on the intersection of any pair of charts. In this way we can provide a construction of such a vector field $X$ by first determining its behaviour on $U^i_{\pm} \cap U_{\pm}$ and then if it is possible extending the local coordinates expression to the whole chart $(U^i_{\pm},\phi^i_{\pm})$. For instance, it is not difficult to check that on $U^1_{+}$ we have $X_{|{U^1_+}}=-\mu\sqrt{1-y_1^2}\pd{}{x_1}$, where $(x_1,y_1)$ are the local coordinates of the chart $\left(U^1_{+}, \phi^1_+\right)$. The remainder of the proof is constituted of easy but tedious computations and thus is omitted.
	\end{proof}
\begin{rmk}
	As it is evident from the local expression on the charts $U^1_{\pm}$, $X$ extends to the north and south poles (on which the spherical coordinates are actually ill-defined) as the null tangent vector (we denote it as $X_{|N}=0$). Let us also stress that $\bS^2$ is not parallelizable (as a consequence of the hairy ball theorem, see \cite[Problem 16.6]{lee2013introduction}), thus there is no way to write $X=f_1\pd{}{x_1}+f_2\pd{}{x_2}$ for two independent vector fields $\pd{}{x_i}$ and two smooth functions $f_i:\bS^2 \to \C$. 
\end{rmk}
 From now on, let us fix $\{O,(\mathbf{e}_1,\mathbf{e}_2,\mathbf{e}_3)\}$, an orthonormal laevorotatory coordinate system on $E^3$ and its associated spherical coordinate system. Moreover, since the coordinate system is fixed, we can refer directly to $\R^3$ in place of $E^3$. Furthermore, we will consider only longitudinal drifts $X \in \fX(\bS^2)$ associated to such a coordinate system, hence we can denote the longitudinal drift of intensity $\mu \not = 0$ as $X_\mu$ while we set $X_0=0$.
 
Let us recall that a function $f \in C^k(\bS^2)$ (possibly $k=\infty$) if its coordinate representation belongs to $C^k(\R^2)$ (see \cite[Chapter 2]{lee2013introduction}). However, in the specific case of the sphere, one can study the regularity of a function $f:\bS^2 \to \C$ by means of its $0$-homogeneous extension $f_0:\R^3\setminus \{0\} \to \C$, defined as
\begin{equation*}
	f_0(x)=f\left(\frac{x}{|x|}\right), \ \forall x \in \R^3 \setminus \{0\}.
\end{equation*}
Indeed (see \cite[Proposition 1.29]{rubin2015introduction}) $f \in C^k(\bS^2)$ if and only if $f_0 \in C^k(\R^3 \setminus \{0\})$. We can further define the gradient $\nabla_g$ on $(\bS^2,g)$ thanks to this identification. Indeed, if we denote by $\nabla$ the gradient on $\R^3$, it is clear that we can decompose, for $x \in \bS^2$ and $f \in C^1(\bS^2)$,
\begin{equation*}
	\nabla f_0(x)=\langle f_0,x\rangle x+\nabla_\tau f_0(x),
\end{equation*}
where $\nabla_\tau f_0(x)$ belongs to the tangent space $T_x\bS^2$. Moreover, since $f_0$ is $0$-homogeneous, it holds $\langle f_0,x\rangle=0$ and then
\begin{equation*}
	\nabla f_0(x)=\nabla_\tau f_0(x)=:\nabla_g f(x).
\end{equation*}
In spherical coordinates, the gradient $\nabla_g$ is represented as
\begin{equation}\label{eq:localgradient}
	\nabla_g=\begin{pmatrix} \pd{}{\theta} \\
		\frac{1}{\sin(\theta)}\pd{}{\varphi}
	\end{pmatrix}.
\end{equation}
The previous equality could be also obtained by the standard definition of gradient on a Riemannian manifold. Indeed, once we recall that, in spherical coordinates
\begin{equation}\label{eq:metriconsphere}
	g=\begin{pmatrix}
		1 & 0 \\
		0 & \sin^2(\theta)
	\end{pmatrix},
\end{equation}
then \eqref{eq:localgradient} follows from the relation $g(\nabla_g f,X)=X(f)$ for any $X \in \fX(\bS^2)$.

We denote the volume form on $(\bS^2,g)$ by $d\sigma$. Such a volume form coincide with the surface measure on $\bS^2$ once it is identified as a submanifold of $\R^3$. Furthermore, thanks to \eqref{eq:metriconsphere}, we know that, in spherical coordinates, it holds
\begin{equation*}
	d\sigma=\sin(\theta)d\theta d\varphi.
\end{equation*}
Once the volume form has been considered, one can define the divergence operator, whose action on a smooth vector field $X \in \fX(\bS^2)$ is denoted by $\nabla_g \cdot X$ (see, for instance, \cite[Exercise 16.11]{lee2013introduction} for a local representation of such an operator). The Laplace-Beltrami operator $\Delta_g$ on $(\bS^2,g)$ is then defined as $\Delta_g=\nabla_g \cdot \nabla_g f$ for any $f \in C^2(\bS^2)$. In spherical coordinates we have
\begin{equation}\label{eq:Laplacian}
	\Delta_g=\frac{1}{\sin(\theta)}\pd{}{\theta}\left(\sin(\theta)\pd{}{\theta}\right)+\frac{1}{\sin^2(\theta)}\pdsup{}{\varphi}{2}.
\end{equation}
As for the gradient, such an operator can be evaluated by considering the $0$-holomorphic extensions. Indeed, it holds $\Delta_g f\left(\displaystyle\frac{x}{|x|}\right)=|x|^2\Delta f_0(x)$ for any $x \in \R^3 \setminus \{0\}$.

It is well-known that $\Delta_g$ admits eigenvalues $\lambda_\ell=-\ell(\ell+1)$, $\ell=0,1,\dots$, with eigenspaces $\Lambda_\ell$ such that
\begin{equation*}
	\dim(\Lambda_\ell)=2\ell+1.
\end{equation*}
Any element of $\Lambda_\ell$ is called a $\ell$-th spherical harmonic. Among them, we can cite Laplace's spherical harmonics (see \cite[Section 3.4]{marinucci2011random}) 
\begin{equation}\label{eq:Ylm}
	Y_{\ell,m}(\theta,\varphi)=\sqrt{\frac{(2\ell+1)(\ell-m)!}{4\pi (\ell+m)!}}P_{\ell}^m(\cos(\theta))e^{im\varphi}, \ \theta \in [0,\pi], \ \varphi \in [0,2\pi),
\end{equation}
where $i$ is the imaginary unit of the complex plane $\C$, $\ell=0,1,\dots$, $m=-\ell,\dots,\ell$ and $P_{\ell}^m$ are the associated Legendre functions (see \cite[Section 4.7]{szeg1939orthogonal}), defined by the Rodriguez formula
\begin{equation*}
	P_\ell^m(s)=\frac{(-1)^m}{2^\ell \ell!}(1-s^2)^{\frac{m}{2}}\dersup{}{s}{\ell+m}(s^2-1)^{\ell}, \ s \in [0,1].
\end{equation*}
Let us recall, in particular, that
\begin{equation}\label{eq:Plm1}
	P_\ell^m(\pm 1)=\begin{cases} 0 & m \not = 0 \\ (\pm 1)^\ell & m=0. \end{cases}
\end{equation}
In case $m=0$, $P_\ell^0=:P_\ell$ are the Legendre polynomials.
Clearly, each choice of spherical coordinate system on $\bS^2$ leads to a different set of Laplace's spherical harmonics. We will refer to $Y_{\ell,m}$ just as spherical harmonics. These functions are orthonormal with respect to the volume form, i.e.
\begin{equation*}
	\int_{\bS^2}Y_{\ell_1,m_1}\overline{Y}_{\ell_2,m_2}d\sigma=\delta_{\ell_1,\ell_2}\delta_{m_1,m_2},
\end{equation*}
where 
\begin{equation*}
	\delta_{\ell_1,\ell_2}=\begin{cases} 1 & \ell_1=\ell_2 \\ 0 &  \ell_1 \not = \ell_2 \end{cases}
\end{equation*}
and, for any complex number $z \in \C$, $\overline{z} \in \C$ is its conjugate. In particular, for any $\ell=0,1,2,\dots$, the set $\{Y_{\ell,m}: \ m=-\ell,\dots,\ell\}$ is an orthonormal basis of $\Lambda_\ell$. Let us denote by $L^p(\bS^2)$ the space of functions $f:\bS^2 \to \C$ such that
\begin{equation*}
	\Norm{f}{L^p(\bS^2)}^p:=\int_{\bS^2}|f|^pd\sigma<+\infty,
\end{equation*}
where we say that $f=g$ if and only if $f(x)=g(x)$ for $\sigma$-almost any $x \in \bS^2$. It is well known that $\{Y_{\ell,m}: \ \ell=0,1,\dots, \ m=-\ell, \dots, \ell\}$ is an orthonormal basis of $L^2(\bS^2)$ (see \cite[(3.39)]{marinucci2011random}) and, for such a reason, any function $f \in L^2(\bS^2)$ can be represented via its Fourier-Laplace series, as
\begin{equation*}
	f=\sum_{\ell=0}^{+\infty}\sum_{m=-\ell}^{\ell}f_{\ell,m}Y_{\ell,m},
\end{equation*}
where
\begin{equation*}
	f_{\ell,m}=\int_{\bS^2} f \overline{Y}_{\ell,m}d\sigma
\end{equation*}
are called the Fourier-Laplace coefficients and the convergence holds in $L^2(\bS^2)$. Moreover, by Parseval's identity 
\begin{equation*}
	\Norm{f}{L^2(\bS^2)}=\sum_{\ell=0}^{+\infty}\sum_{m=-\ell}^{\ell}f^2_{\ell,m}.
\end{equation*}

Let us also recall the following addition formula (see \cite[(3.42)]{marinucci2011random}):
\begin{equation}\label{eq:additionthm}
	\sum_{m=-\ell}^{\ell}Y_{\ell,m}(x)\overline{Y}_{\ell,m}(y)=\frac{2\ell+1}{4\pi}P_\ell( x\cdot y), \quad \forall x,y \in \bS^2,
\end{equation}
where $x\cdot y$ is the scalar product of $x,y \in \R^3$. 
As a direct consequence, recalling that $P_\ell(1)=1$, we have
\begin{equation}\label{eq:additionthmspecial}
	\sum_{m=-\ell}^{\ell}|Y_{\ell,m}(x)|^2=\frac{2\ell+1}{4\pi}, \ \forall x \in \bS^2,
\end{equation}
which also implies that
\begin{equation}\label{eq:uniformbound}
	|Y_{\ell,m}(x)|\le \sqrt{\frac{2\ell+1}{4\pi}}, \ \forall x \in \bS^2.
\end{equation}
Furthermore it is clear that by definition of $Y_{\ell,m}$ we have $e^{i\mu t}Y_{\ell,m}(x)=Y_{\ell,m}(x_t^\mu)$, where $x=(\theta,\varphi)$ and $x_t^\mu=(\theta,\varphi+\mu t)$, and then
\begin{equation}\label{eq:additiondrift}
\sum_{m=-\ell}^{\ell}e^{i\mu t} Y_{\ell,m}(x)\overline{Y}_{\ell,m}(y)=\frac{2\ell+1}{4\pi}P_\ell( x_t^\mu\cdot y)=\frac{2\ell+1}{4\pi}P_\ell( x\cdot y_t^{-\mu}), \quad \forall x,y \in \bS^2.
\end{equation}
Now, let us consider $\mu \in \R$. We are interested in the elliptic operator
\begin{equation}\label{eq:generator}
	\cG_\mu=\Delta_{g}+X_\mu,
\end{equation}
which is diagonalizable in $L^2(\bS^2)$, as it can be shown by direct computation, and it is recalled in the following theorem.
\begin{thm}\label{thm:specG}
	$\cG_\mu$ admits purely discrete spectrum, with simple eigenvalues given by
	\begin{equation*}
		\lambda_{\ell,m}=i\mu m-\ell(\ell+1), \ \ell=0,1,\dots, \ m=-\ell,\dots,\ell.
	\end{equation*}
	In particular, it holds
	\begin{equation}\label{eq:eigenvalues}
		\cG_\mu Y_{\ell,m}=\lambda_{\ell,m}Y_{\ell,m}
	\end{equation}
	for any $\ell=0,1,\dots$ and $m=-\ell,\dots,\ell$.
\end{thm}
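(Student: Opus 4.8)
The plan is to establish the eigenvalue relation \eqref{eq:eigenvalues} by a direct computation, splitting $\cG_\mu$ into its two summands, and then to read off the spectral statement from the completeness of the system $\{Y_{\ell,m}\}$ in $L^2(\bS^2)$. The first summand is already understood: by the recalled spectral properties of the Laplace--Beltrami operator, $\Delta_g Y_{\ell,m} = -\ell(\ell+1) Y_{\ell,m}$. It therefore remains only to compute the action of the drift $X_\mu$.

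For the drift I would work on the charts $U_\pm$, where spherical coordinates are available and, by \eqref{eq:longdrift}, $X_\mu = \mu \pd{}{\varphi}$. Since $Y_{\ell,m}(\theta,\varphi) = c_{\ell,m} P_\ell^m(\cos\theta) e^{im\varphi}$ by \eqref{eq:Ylm}, with $c_{\ell,m}$ a normalizing constant independent of $\varphi$, differentiation in $\varphi$ acts only on the exponential factor, so that $\pd{Y_{\ell,m}}{\varphi} = im\, Y_{\ell,m}$ and hence $X_\mu Y_{\ell,m} = i\mu m\, Y_{\ell,m}$ on $U_\pm$. The poles are excluded from these charts, but both sides of this identity are continuous (indeed smooth) on all of $\bS^2$, so the relation extends by continuity; consistency at the poles is automatic, since there $X_\mu = 0$ while $Y_{\ell,m}$ vanishes for $m \neq 0$ by \eqref{eq:Plm1} and the right-hand side carries the factor $m$ when $m=0$. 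Adding the two contributions yields $\cG_\mu Y_{\ell,m} = (i\mu m - \ell(\ell+1)) Y_{\ell,m} = \lambda_{\ell,m} Y_{\ell,m}$, which is \eqref{eq:eigenvalues}.

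From here the spectral picture follows. Since $\{Y_{\ell,m}\}$ is a complete orthonormal basis of $L^2(\bS^2)$ and $\cG_\mu$ is diagonal in it, on its natural domain $\Dom(\cG_\mu) = \{f : \sum_{\ell,m} |\lambda_{\ell,m}|^2 |f_{\ell,m}|^2 < +\infty\}$ the operator is closed and densely defined, and for $\lambda \notin \{\lambda_{\ell,m}\}$ the resolvent is the bounded diagonal operator with entries $(\lambda_{\ell,m} - \lambda)^{-1}$, of norm $\operatorname{dist}(\lambda, \{\lambda_{\ell,m}\})^{-1}$. As $\operatorname{Re}\lambda_{\ell,m} = -\ell(\ell+1) \to -\infty$, the eigenvalue set has no finite accumulation point and the resolvent is compact; hence the spectrum is purely discrete and coincides with $\{\lambda_{\ell,m}\}$. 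Simplicity is then the injectivity of $(\ell,m) \mapsto \lambda_{\ell,m}$: if $\lambda_{\ell,m} = \lambda_{\ell',m'}$, equating imaginary parts gives $\mu m = \mu m'$ and equating real parts gives $\ell(\ell+1) = \ell'(\ell'+1)$; for $\mu \neq 0$ the former forces $m = m'$, and the strict monotonicity of $t \mapsto t(t+1)$ on $[0,+\infty)$ forces $\ell = \ell'$.

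I expect no serious obstacle in the core computation, which is elementary once the coordinate action of $X_\mu$ is in hand. The two points needing genuine care are the passage across the poles, where the coordinate expression $\mu \pd{}{\varphi}$ is ill-defined and the eigenrelation must be argued by continuity together with the vanishing of $X_\mu$ there, and the precise functional-analytic setting that makes ``spectrum'' and ``diagonalizable'' meaningful. For the latter one fixes the domain as above, or equivalently notes that $\Delta_g$ is self-adjoint and $X_\mu$ is skew-adjoint and commutes with $\Delta_g$ (both being invariant under rotation about the polar axis), so that $\cG_\mu$ is a normal operator with compact resolvent. I would also flag that the simplicity assertion genuinely uses $\mu \neq 0$: for $\mu = 0$ the eigenvalue $-\ell(\ell+1)$ has multiplicity $2\ell+1 = \dim \Lambda_\ell$.
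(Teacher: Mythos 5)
Your proof is correct and follows exactly the route the paper intends: the paper states this theorem without proof, remarking only that diagonalizability of $\cG_\mu$ in $L^2(\bS^2)$ ``can be shown by direct computation,'' and your computation of $X_\mu Y_{\ell,m}=i\mu m\,Y_{\ell,m}$ on the coordinate charts (extended across the poles by continuity, where both sides vanish) combined with the standard diagonal/compact-resolvent argument supplies precisely that. Your closing caveat is also well taken: simplicity of the eigenvalues genuinely requires $\mu\neq 0$ (for $\mu=0$ the eigenvalue $-\ell(\ell+1)$ has multiplicity $2\ell+1$), a hypothesis the theorem's wording leaves implicit.
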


\subsubsection{Sobolev spaces on $\bS^2$}
Let us assume that $f \in C^2(\bS^2)$ with Fourier-Laplace coefficients $(f_{\ell,m})_{\substack{\ell=0,1,\dots \\ m=-\ell,\dots,\ell}}$. Then it is clear that $\Delta_g f \in L^2(\bS^2)$ is well defined and
\begin{equation}\label{eq:LB}
	\Delta_g f=-\sum_{\ell=0}^{+\infty}\sum_{m=-\ell}^{\ell}f_{\ell,m}\ell(\ell+1)Y_{\ell,m}.
\end{equation}
Recalling that $\overline{\Delta_g f}=\Delta_g \overline{f}$ for any $f \in C^2(\bS^2)$, as a consequence of Stokes theorem, we have the following Green identity for any functions $f,h \in C^2(\bS^2)$
\begin{equation}\label{eq:selfadjoint}
	\int_{\bS^2} \overline{h}\Delta_g f d\sigma=-\int_{\bS^2}g(\nabla_g f, \nabla_g \overline{h})d\sigma=\int_{\bS^2} \Delta_g \overline{h} f d\sigma.
\end{equation}
This identity implies that $\Delta_g$ is a self-adjoint operator. Furthermore, if $f \in C^1(\bS^2)$ and $X \in \fX(\bS^2)$, then the following integration by parts formula holds
\begin{equation*}
	\int_{\bS^2}g(\nabla_g f, X)d\sigma=-\int_{\bS^2} f \nabla_g \cdot X d\sigma.
\end{equation*}
Thanks to the previous relations we can define the weak gradient and the weak Laplacian of a function. Indeed, we say that $f \in H^1(\bS^2)$ if and only if $f \in L^2(\bS^2)$ and there exists a vector fied $Y$ such that for any $X \in \fX(\bS^2)$ it holds
\begin{equation*}
	\int_{\bS^2}g(Y, X)d\sigma=-\int_{\bS^2} f \nabla_g \cdot X d\sigma.
\end{equation*}
and $|Y|:=\sqrt{g(Y,Y)}$ belongs to $L^2(\bS^2)$. In such a case, we denote $Y=\nabla_g f$ and we recall that if $f \in C^1(\bS^2)$ these definitions coincide. Furthermore, $f \in H^2(\bS^2)$ if and only if $f \in H^1$ and there exists a function $\widetilde{f} \in L^2$ such that for any $h \in C^2(\bS^2)$ it holds
\begin{equation*}
	\int_{\bS^2} \overline{h}\widetilde{f} d\sigma=-\int_{\bS^2}g(\nabla_g f, \nabla_g \overline{h})d\sigma.
\end{equation*}
In such a case we denote $\widetilde{f}=\Delta_g f$ and if $f \in C^2(\bS^2)$ the two definitions coincide. The spaces $H^1(\bS^2)$ and $H^2(\bS^2)$ are Hilbert spaces once equipped with the norms
\begin{align*}
	\Norm{f}{H^1(\bS^2)}&:=\Norm{f}{L^2(\bS^2)}+\Norm{|\nabla_g f|}{L^2(\bS^2)}\\
	\Norm{f}{H^2(\bS^2)}&:=\Norm{f}{L^2(\bS^2)}+\Norm{|\nabla_g f|}{L^2(\bS^2)}+\Norm{\Delta_g f}{L^2(\bS^2)}.
\end{align*}
Furthermore $H^1(\bS^2)\subset L^2(\bS^2)$. If we denote by $H^{-1}(\bS^2)$ the dual of $H^1(\bS^2)$, then we obtain the Gelfand triple
\begin{equation*}
	H^1(\bS^2)\subset L^2(\bS^2) \subset H^{-1}(\bS^2).
\end{equation*}
Let us recall that $H^1(\bS^2)$ is isomorphic to $H^{-1}(\bS^2)$ by the Riesz-Fisher theorem, but they do not coincide. In any case, for any function $f \in H^1(\bS^2)$, one can clearly define the functional
\begin{equation*}
	\Delta_g f: h \in H^1(\bS^2) \mapsto -\int_{\bS^2}g(\nabla_g f,\nabla_g \overline{h})d\sigma \in \C 
\end{equation*}
which is clearly linear and bounded in $H^1(\bS^2)$ and thus belongs to $H^{-1}(\bS^2)$. In this way we are able to extend the definition of the Laplace-Beltrami operator to functions that only belong to $H^1(\bS^2)$. The action of $\Delta_g f$ on $h$ will be still denoted through integrals. Furthermore, by definition, if $f,h \in H^1(\bS^2)$, then \eqref{eq:selfadjoint} is still valid. This means that $\Delta_g$ admits a self-adjoint extension to $H^1(\bS^2)$.

Moreover, let us observe that if $f \in H^2(\bS^2)$, then \eqref{eq:LB} still holds. It actually holds even in $H^1(\bS^2)$, where the convergence of the series has to be intended in $H^{-1}(\bS^2)$. This tells us, further, that if $f \in H^2(\bS^2)$ then
\begin{equation}\label{eq:H2cond}
	\sum_{\ell=0}^{+\infty}\ell^2(\ell+1)^2\sum_{m=-\ell}^{\ell}|f_{\ell,m}|^2<+\infty.
\end{equation}
In particular, let us define
\begin{equation*}
	A_\ell(f):=\sum_{m=-\ell}^{\ell}|f_{\ell,m}|^2,
\end{equation*}
where the sequence $(A_\ell(f))_{\ell \ge 0}$ is called the power-spectrum of $f$ so that we can rewrite \eqref{eq:H2cond} as
\begin{equation}\label{eq:H2cond2}
	\sum_{\ell=0}^{+\infty}\ell^4A_\ell(f)<+\infty.
\end{equation}
Actually, condition \eqref{eq:H2cond2} is equivalent to $f \in H^2(\bS^2)$. To better understand this, let us recall the following result (see \cite[Theorems A.46 and A.47]{rubin2015introduction}).
\begin{thm}\label{thm:smooth}
	Let $f \in L^2(\bS^2)$ with Laplace-Fourier coefficients $(f_{\ell,m})_{\substack{\ell=0,1,\dots \\ m=-\ell,\dots,\ell}}$.
	\begin{itemize}
		\item[(i)] If $f \in C^{2r}(\bS^{2})$ for some $r>1$, then
		\begin{equation}\label{eq:sumcond}
			\sum_{\ell=0}^{+\infty}\ell^{k}\sum_{m=\ell}^{\ell}|f_{\ell,m}|<+\infty
		\end{equation}
		for all $0 \le k<2r-2$;
		\item[(ii)] If $f \in C^{\infty}(\bS^{2})$, then the inequality \eqref{eq:sumcond} holds for all $k \ge 0$;
		\item[(iii)] If the inequality \eqref{eq:sumcond} holds for some $k \ge \frac{1}{2}$, then $f \in C^r(\bS^2)$ for any $r \le k-\frac{1}{2}$ and the series
		\begin{equation}\label{eq:seriesLF}
			f=\sum_{\ell=0}^{+\infty}\sum_{m=-\ell}^{\ell}f_{\ell,m}Y_{\ell,m}
		\end{equation}
		converges absolutely and uniformly;
		\item[(iv)] If the inequality \eqref{eq:sumcond} holds for any $k \ge 0$, then $f \in C^\infty(\bS^2)$ and the series \eqref{eq:seriesLF} converges absolutely and uniformly.
	\end{itemize}
\end{thm}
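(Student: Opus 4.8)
The plan is to read the theorem as a pair of converse implications: parts (i)--(ii) say that smoothness forces decay of the Fourier--Laplace coefficients, while parts (iii)--(iv) are Sobolev-type embeddings recovering smoothness from such decay. Beyond the material already assembled I would only need the self-adjointness identity \eqref{eq:selfadjoint}, the eigenrelation \eqref{eq:LB}, the uniform bound \eqref{eq:uniformbound}, and classical polynomial-in-$\ell$ estimates for the associated Legendre functions $P_\ell^m$ and their derivatives, for which I would rely on \cite{szeg1939orthogonal}.

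For the forward direction, suppose first that $r$ is a positive integer. Since $f\in C^{2r}(\bS^2)$, the iterated Laplacian $\Delta_g^{r}f$ is continuous, hence lies in $L^2(\bS^2)$; applying the Green identity \eqref{eq:selfadjoint} $r$ times against $Y_{\ell,m}$ and invoking \eqref{eq:LB} shows that its Fourier--Laplace coefficients are $(-\ell(\ell+1))^{r}f_{\ell,m}$, so Parseval yields
\begin{equation*}
	\sum_{\ell=0}^{+\infty}(\ell(\ell+1))^{2r}A_\ell(f)=\Norm{\Delta_g^{r}f}{L^2(\bS^2)}^2<+\infty .
\end{equation*}
To pass from this $\ell^2$-control of the power spectrum to the $\ell^1$-type sum in \eqref{eq:sumcond} I would apply Cauchy--Schwarz twice: first over the $2\ell+1$ values of $m$, giving $\sum_m|f_{\ell,m}|\le\sqrt{2\ell+1}\,\sqrt{A_\ell(f)}$, and then over $\ell$ against the weight $(\ell(\ell+1))^{2r}$. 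The resulting bound is finite whenever $\sum_\ell\ell^{2k+1-4r}<+\infty$, i.e.\ for $k<2r-1$, which covers the stated range; for non-integer $r$ one runs the same computation with a fractional power of $\Delta_g$, using the embedding of $C^{2r}(\bS^2)$ into $H^{s}(\bS^2)$ for every $s<2r$. Part (ii) is then the case $r\to+\infty$.

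For the converse the decisive estimate is that, in each coordinate chart, the derivatives of $Y_{\ell,m}$ of order $j$ are bounded uniformly by $C_j\,\ell^{\,j+1/2}$: the factor $\ell^{1/2}$ is exactly \eqref{eq:uniformbound}, while each differentiation costs one power of $\ell$ (a $\varphi$-derivative produces $im$ with $|m|\le\ell$, and the $\theta$-derivatives are controlled through the recurrences and sup-norm bounds for $P_\ell^m$). Granting this, if \eqref{eq:sumcond} holds for some $k\ge\frac12$, then for every multi-index $\alpha$ with $|\alpha|\le k-\frac12$ the differentiated series $\sum_{\ell,m}f_{\ell,m}\,\partial^{\alpha}Y_{\ell,m}$ is dominated by $C\sum_\ell\ell^{\,|\alpha|+1/2}\sum_m|f_{\ell,m}|<+\infty$, hence converges absolutely and uniformly. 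The standard theorem on term-by-term differentiation of uniformly convergent series then shows that \eqref{eq:seriesLF} defines a function of class $C^{\lfloor k-1/2\rfloor}$ with derivatives computed termwise, the remaining fractional order being supplied by the same bounds read as H\"older estimates; this is (iii), and (iv) is again the limit $k\to+\infty$.

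The step I expect to be the genuine obstacle is the uniform derivative bound $|\partial^{\alpha}Y_{\ell,m}|\le C_{|\alpha|}\,\ell^{\,|\alpha|+1/2}$, since it cannot be extracted from orthonormality or the addition formula \eqref{eq:additionthmspecial} alone and instead rests on the asymptotic and recurrence estimates for the associated Legendre functions in \cite{szeg1939orthogonal}. Everything else is bookkeeping with two applications of Cauchy--Schwarz and the comparison $\ell(\ell+1)\asymp\ell^2$, so in practice I would simply invoke the cited results rather than reprove these classical bounds.
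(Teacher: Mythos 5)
There is nothing in the paper to compare your argument against: the authors do not prove Theorem \ref{thm:smooth} at all, but import it verbatim from \cite[Theorems A.46 and A.47]{rubin2015introduction}. So your proposal has to stand on its own, and in outline it does — it is the standard argument one would expect behind the cited result. The even-integer case of (i) is exactly right: iterating the Green identity \eqref{eq:selfadjoint} and using \eqref{eq:LB} identifies the coefficients of $\Delta_g^r f$ as $(-\ell(\ell+1))^r f_{\ell,m}$, Parseval controls $\sum_\ell (\ell(\ell+1))^{2r}A_\ell(f)$, and your two Cauchy--Schwarz steps give \eqref{eq:sumcond} for all $k<2r-1$, which more than covers the stated range. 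Parts (ii) and (iv) follow by exhaustion, and the skeleton of (iii) — absolute and uniform convergence of the differentiated series, then term-by-term differentiation — is the right route; you only omit the small remark that the uniform limit must coincide a.e.\ with $f$ because the series already converges to $f$ in $L^2(\bS^2)$.

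Two of your inputs deserve flags. First, the non-integer case of (i) is not "the same computation": when $\lfloor 2r\rfloor$ is odd, dropping down to the largest even integer below $2r$ and iterating the Laplacian only yields $k<\lfloor 2r\rfloor -2$, which falls short of the claimed $k<2r-2$ (take $2r=3.2$: you get $k<1$, not $k<1.2$). So the embedding $C^{2r}(\bS^2)\subset H^s(\bS^2)$ for $s<2r$ is genuinely needed there, and it is itself a nontrivial statement of the same nature as (i), asserted rather than proven. Second, the Bernstein-type bound $|\partial^\alpha Y_{\ell,m}|\le C_{|\alpha|}\,\ell^{|\alpha|+1/2}$, which you rightly identify as the crux of (iii), must be established in smooth charts covering the poles: spherical coordinates are not a chart there, and \eqref{eq:dertheta} carries a $\cot\theta$ factor, so extracting uniform chart-derivative bounds from Legendre asymptotics is more delicate than "each differentiation costs one power of $\ell$". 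A self-contained alternative that avoids \cite{szeg1939orthogonal} entirely: for each $j$, the function $x\mapsto \sum_m |\nabla_g^j Y_{\ell,m}(x)|^2$ is rotation invariant (the sum is unchanged under a unitary change of orthonormal basis of $\Lambda_\ell$, and rotations act by isometries), hence constant on $\bS^2$; integrating over the sphere and integrating by parts shows the constant is $O(\ell^{2j+1})$ — for instance $\sum_m |\nabla_g Y_{\ell,m}(x)|^2=\frac{(2\ell+1)\ell(\ell+1)}{4\pi}$ — which is exactly the bound you need, in intrinsic form, for every order of differentiation.
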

Next, let us introduce, in general, the following \textit{Sobolev spaces}: we say that $f \in H^s(\bS^2)$ for some $s \ge 0$ if and only if $f \in L^2(\bS^2)$ and its power spectrum satisfies
\begin{equation*}
	[f]_{H^s(\bS^2)}^2:=\sum_{\ell=0}^{+\infty}A_\ell(f)\ell^{2s}<+\infty.
\end{equation*}
Let us observe that if $s>0$, then $[\cdot]_{H^s(\bS^2)}^2$ is a seminorm on $H^s(\bS^2)$, as constant functions $f \in L^2(\bS^2)$ satisfy $[f]_{H^s(\bS^2)}^2=0$. However, a norm on such spaces can be defined by setting
\begin{equation*}
	\Norm{f}{H^s(\bS^2)}:=\Norm{f}{L^2(\bS^2)}+[f]_{H^s(\bS^2)}.
\end{equation*}
Thanks to Theorem \ref{thm:smooth}, we can prove that functions belonging to $H^s(\bS^2)$ are actually smoother than just in $L^2(\bS^2)$. To do this, let us prove the following technical lemma, that will be also used in the next sections.
\begin{lem}\label{lem:seriesconv}
	If $f \in H^s(\bS^2)$ for some $s \ge 0$, then for any $k<s-\frac{1}{2}$ it holds
	\begin{equation*}
		\sum_{\ell=0}^{+\infty}\ell^k \sqrt{A_\ell(f)}<+\infty.
	\end{equation*}
\end{lem}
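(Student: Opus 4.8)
The plan is to obtain the bound from a single application of the Cauchy--Schwarz inequality, after inserting the weight $\ell^s$ that governs the $H^s$-seminorm. First I would separate the $\ell=0$ term, which contributes only the finite quantity $\sqrt{A_0(f)}$ (interpreting $0^k$ in the natural way, or simply observing that convergence of a series is unaffected by a single term), so that everything reduces to estimating the tail $\sum_{\ell \ge 1}\ell^k\sqrt{A_\ell(f)}$.

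For $\ell \ge 1$ I would split the exponent as
\begin{equation*}
	\ell^k\sqrt{A_\ell(f)} = \left(\ell^s\sqrt{A_\ell(f)}\right)\cdot \ell^{k-s},
\end{equation*}
and apply Cauchy--Schwarz to the two factors, obtaining
\begin{equation*}
	\sum_{\ell=1}^{+\infty}\ell^k\sqrt{A_\ell(f)} \le \left(\sum_{\ell=1}^{+\infty}\ell^{2s}A_\ell(f)\right)^{\frac{1}{2}}\left(\sum_{\ell=1}^{+\infty}\ell^{2(k-s)}\right)^{\frac{1}{2}}.
\end{equation*}
The first factor on the right-hand side is bounded by $[f]_{H^s(\bS^2)}$, which is finite precisely because $f \in H^s(\bS^2)$ by hypothesis.

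It then remains only to check that the second factor is finite. This is the $p$-series $\sum_{\ell \ge 1}\ell^{2(k-s)}$, which converges if and only if $2(k-s)<-1$, that is, if and only if $k<s-\frac{1}{2}$. This is exactly the assumed range of $k$, and it is this elementary convergence criterion that pins down the sharp threshold $s-\frac{1}{2}$ appearing in the statement.

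I do not expect any genuine obstacle: the lemma is just a comparison between a weighted $\ell^1$-norm and the weighted $\ell^2$-norm encoded by $[f]_{H^s(\bS^2)}$, and the entire content lies in the bookkeeping of the exponents. The only points requiring mild care are the treatment of the $\ell=0$ term (and, if one wishes to allow $k<0$, the convention for $0^k$ there) and the verification that $k<s-\frac{1}{2}$ is both necessary and sufficient for the $p$-series to converge, which shows that the threshold cannot be relaxed.
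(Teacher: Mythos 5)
Your proof is correct and is essentially the paper's own argument: the paper applies Jensen's inequality for the concave square root with the normalized weights $\ell^{2k-2s}/\zeta(2s-2k)$, which in this setting amounts exactly to your Cauchy--Schwarz estimate with the splitting $\ell^k=\ell^s\cdot\ell^{k-s}$, and yields the same bound $\sqrt{\zeta(2s-2k)}\,[f]_{H^s(\bS^2)}$. Both proofs hinge on the identical fact that $\sum_{\ell\ge 1}\ell^{2(k-s)}=\zeta(2s-2k)$ converges precisely when $k<s-\tfrac{1}{2}$ (your handling of the $\ell=0$ term is, if anything, slightly more careful than the paper's).
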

\begin{proof}
	Let us denote by $\zeta$ the Riemann Zeta function and let us notice that, by Jensen's inequality and the fact that the square root function is concave,
	\begin{align*}
		\sum_{\ell=0}^{+\infty}\ell^k\sqrt{A_\ell(f)}&=\frac{1}{\zeta(2s-2k)}\sum_{\ell=0}^{+\infty}\ell^{2k-2s}\sqrt{\zeta^2(2s-2k)\ell^{4s-2k}A_\ell(f)}\\
		&\le \sqrt{\zeta(2s-2k)\sum_{\ell=0}^{+\infty}\ell^{2s}A_\ell(f)}<+\infty.
	\end{align*}
\end{proof}
As a direct consequence, we have the following result.
\begin{thm}\label{thm:smoothness}
	If $f \in H^s(\bS^2)$ for some $s>2$, then $f \in C^r(\bS^2)$ for any $r<s-1$.
\end{thm}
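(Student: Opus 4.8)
The plan is to turn the statement into one about uniform convergence of the Fourier--Laplace series of $f$ and of its termwise derivatives up to order $r$, feeding the summability produced by Lemma~\ref{lem:seriesconv} into a pointwise control of each harmonic block. Write $g_\ell:=\sum_{m=-\ell}^{\ell}f_{\ell,m}Y_{\ell,m}\in\Lambda_\ell$ for the projection of $f$ onto the $\ell$-th eigenspace, so that $f=\sum_{\ell\ge0}g_\ell$ in $L^2(\bS^2)$; the goal is to show this series, and enough of its derivatives, converge uniformly.

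First I would control each block in the sup norm. By Cauchy--Schwarz and the addition formula \eqref{eq:additionthmspecial},
\begin{equation*}
\|g_\ell\|_{\infty}\le\Big(\sum_{m=-\ell}^{\ell}|f_{\ell,m}|^2\Big)^{1/2}\Big(\sup_{x\in\bS^2}\sum_{m=-\ell}^{\ell}|Y_{\ell,m}(x)|^2\Big)^{1/2}=\sqrt{\tfrac{2\ell+1}{4\pi}}\,\sqrt{A_\ell(f)}.
\end{equation*}
Since $\sqrt{(2\ell+1)/4\pi}\le C\ell^{1/2}$ for $\ell\ge1$, Lemma~\ref{lem:seriesconv} with $k=\tfrac12<s-\tfrac12$ already yields $\sum_\ell\|g_\ell\|_\infty<\infty$, so the series converges absolutely and uniformly to a continuous function; this is the case $r=0$ and explains the continuity threshold $s>1$.

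For the derivatives I would invoke, for each multi-index $\alpha$ with $|\alpha|\le r$ read in local coordinates, a Bernstein--Markov inequality for spherical harmonics, $\|\partial^{\alpha}g_\ell\|_{\infty}\le C_{\alpha}\,\ell^{|\alpha|}\|g_\ell\|_{\infty}$, valid because $g_\ell$ is the restriction of a degree-$\ell$ harmonic polynomial. Combining with the block bound gives $\|\partial^{\alpha}g_\ell\|_{\infty}\le C\,\ell^{|\alpha|+1/2}\sqrt{A_\ell(f)}$. Taking $k=r+\tfrac12$, the hypothesis $r<s-1$ is precisely $k<s-\tfrac12$, so Lemma~\ref{lem:seriesconv} makes $\sum_\ell\ell^{r+1/2}\sqrt{A_\ell(f)}$ converge; hence $\sum_\ell\partial^{\alpha}g_\ell$ converges uniformly for every $|\alpha|\le r$. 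The classical theorem on termwise differentiation of uniformly convergent series then shows the uniform limit is $C^r$ and coincides with $f$, i.e.\ $f\in C^r(\bS^2)$. Equivalently, the same estimates verify hypothesis \eqref{eq:sumcond} of Theorem~\ref{thm:smooth}(iii), from which one may conclude directly.

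The main obstacle is the passage from the $\ell^2$-type information in Lemma~\ref{lem:seriesconv} (the power spectrum $\sqrt{A_\ell}$) to genuine pointwise control of the blocks and of their derivatives. The decisive point — and the reason the \emph{sharp} exponent $s-1$ (rather than $s-\tfrac32$) is reached — is that \eqref{eq:additionthmspecial} bounds $\sup_x\sum_m|Y_{\ell,m}(x)|^2$ by only $O(\ell)$, so Cauchy--Schwarz costs a single factor $\ell^{1/2}$; any cruder estimate, such as summing the pointwise bounds \eqref{eq:uniformbound} over the $2\ell+1$ values of $m$, would lose an extra half power of $\ell$ and only give $r<s-\tfrac32$. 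The remaining care is in the Bernstein estimate (its constant's dependence on $|\alpha|$) and in justifying termwise differentiation chart by chart on an atlas of $(\bS^2,g)$.
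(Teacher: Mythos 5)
Your main argument is correct, but it is not the paper's argument; the two routes differ in an instructive way. The paper never looks at the blocks $g_\ell$: it fixes $k$ with $2r+2\le 2k+1<2s$, asserts $\sum_{m=-\ell}^{\ell}|f_{\ell,m}|\le C\sqrt{A_\ell(f)}$ by ``equivalence of the norms in finite dimensional vector spaces'', deduces $\sum_{\ell}\ell^k\sum_{m}|f_{\ell,m}|<+\infty$ from Lemma \ref{lem:seriesconv}, and concludes via Theorem \ref{thm:smooth}(iii). You instead work at the level of $\Norm{g_\ell}{\infty}$ (Cauchy--Schwarz against the addition formula \eqref{eq:additionthmspecial}), control derivatives by a Bernstein--Markov inequality, and finish by termwise differentiation. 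What each buys: the paper's route stays entirely inside results it has already quoted, but its key inequality hides a constant that is not uniform in $\ell$ --- on the $(2\ell+1)$-dimensional space $\Lambda_\ell$ the $\ell^1$--$\ell^2$ comparison costs $\sqrt{2\ell+1}$ (take $f_{\ell,m}=(2\ell+1)^{-1/2}a_\ell$ for all $m$), so an honest accounting through Theorem \ref{thm:smooth} only yields $r<s-\tfrac{3}{2}$. That is exactly the ``cruder estimate'' you warn against in your closing paragraph: your diagnosis of where the half power is lost applies verbatim to the paper's own proof. Your block-level argument avoids that loss and genuinely reaches $r<s-1$, at the price of importing the Bernstein inequality $\Norm{\partial^{\alpha}g_\ell}{\infty}\le C_\alpha\ell^{|\alpha|}\Norm{g_\ell}{\infty}$. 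That inequality is indeed classical for restrictions of degree-$\ell$ polynomials: first-order tangential derivatives reduce, along planar circles, to Bernstein's inequality for trigonometric polynomials of degree $\le\ell$, and higher orders follow by iterating the angular derivatives $x_i\partial_j-x_j\partial_i$, which map spherical polynomials of degree $\le\ell$ to spherical polynomials of degree $\le\ell$ and, on compact subsets of charts, span all coordinate derivatives --- precisely the chart-by-chart care you flag.

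One sentence of yours should be struck: ``Equivalently, the same estimates verify hypothesis \eqref{eq:sumcond} of Theorem \ref{thm:smooth}(iii), from which one may conclude directly.'' This is false, and it contradicts your own last paragraph: \eqref{eq:sumcond} is a statement about $\sum_{\ell}\ell^k\sum_{m}|f_{\ell,m}|$, and passing from $\sqrt{A_\ell(f)}$ to $\sum_{m}|f_{\ell,m}|$ necessarily costs the factor $\sqrt{2\ell+1}$, so any route through Theorem \ref{thm:smooth} cannot do better than $r<s-\tfrac{3}{2}$. Your sup-norm estimate on the blocks is strictly stronger information than \eqref{eq:sumcond}; the uniform-convergence argument is not a repackaging of the paper's proof but a repair of it, and it is the version worth keeping.
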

\begin{proof}
	Let $r<s-1$ and consider $k \ge 0$ such that $2r+2 \le 2k+1 < 2s$. By the equivalence of the norms in finite dimensional vector spaces, we clearly have
	\begin{equation*}
		\sum_{\ell=0}^{+\infty}\ell^k\sum_{m=\ell}^{\ell}|f_{\ell,m}| \le C\sum_{\ell=0}^{+\infty}\ell^k\sqrt{A_\ell(f)}<+\infty,
	\end{equation*}
	by Lemma \ref{lem:seriesconv}. Hence, by Theorem \ref{thm:smooth}, we know that $f \in C^r(\bS^2)$.
\end{proof}
It is clear that $H^0(\bS^2)=L^2(\bS^2)$. The fact that $H^1(\bS^2)$ coincides with the one defined before is less trivial to observe: we refer to \cite[(7.11)]{figalli2015isoperimetry} and references therein for such identity. Furthermore, it is clear that if $[f]_{H^s(\bS^2)}<+\infty$, then also $[f]_{H^{s'}(\bS^2)}<+\infty$ for any $0 \le s'<s$. Thanks to such an observation, we have that the space $H^2(\bS^2)$ coincides with the one defined before. 

\subsubsection{The Brownian motion with longitudinal drift on $\bS^2$}
Following the lines of \cite{wang2014analysis}, we now give the definition of Brownian motion on $\bS^2$ with longitudinal drift. It is clear that $\cG_\mu$ defined in Equation \eqref{eq:generator} is an elliptic operator which generates a diffusion process $W^\mu=\{W^\mu(t), \ t \ge 0\}$ on $\bS^2$ with respect to a suitable probability basis $(\Omega, \cF, \{\cF\}_{t \ge 0}, \bP)$, which from now on will be fixed. Furthermore, $\bS^2$ is a compact manifold without boundary, thus the geodesic distance is bounded and it is clear that $W^\mu(t)$ is non-explosive. Hence, the following proposition holds.
\begin{prop}
	For any $\mu \in \R$, the elliptic operator $\cG_\mu$ defined in Equation \eqref{eq:generator} generates a non explosive diffusion process $W^\mu$.
\end{prop}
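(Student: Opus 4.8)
The plan is to invoke the general construction of diffusion processes generated by second-order elliptic operators on a compact manifold, as developed in \cite{wang2014analysis}, and then to read off non-explosion directly from the compactness of $\bS^2$. The statement is essentially a citation of standard theory, so the work lies entirely in checking that $\cG_\mu$ fits the hypotheses and that the compactness of the state space disposes of the usual boundary and growth conditions.

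First I would verify that $\cG_\mu$ falls within the scope of that theory. As a differential operator, $\cG_\mu=\Delta_g+X_\mu$ has real smooth coefficients; the complex spectrum of Theorem \ref{thm:specG} arises only because we diagonalize on the complexified $L^2(\bS^2)$, and does not interfere with its role as a legitimate generator of a real diffusion. Its principal part is $\Delta_g$, whose symbol at each point is the inverse metric tensor, a positive-definite quadratic form; since $\bS^2$ is compact, the ellipticity is in fact uniform. There is no zeroth-order term, and the first-order part is the drift $X_\mu$, which by Proposition \ref{prop:existlongdrift} together with the subsequent remark is a genuinely smooth global vector field, extending to the poles as the null tangent vector. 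Thus $\cG_\mu$ is a uniformly elliptic operator with smooth coefficients and no boundary terms to treat.

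Given this, the existence of a diffusion $W^\mu$ with generator $\cG_\mu$ follows from any of the standard routes: well-posedness of the martingale problem for $\cG_\mu$ on the core $C^2(\bS^2)$, or equivalently solving the associated Stratonovich stochastic differential equation on the manifold and patching the local chart-wise solutions, whose existence and pathwise uniqueness are guaranteed by the smoothness of the coefficients. Either construction produces a strong Markov process whose transition semigroup is Feller and whose generator, restricted to $C^2(\bS^2)$, agrees with $\cG_\mu$; this is precisely the diffusion associated with the Brownian motion with longitudinal drift defined in the preceding subsection.

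Finally I would address non-explosion. Since $\bS^2$ is compact and has no boundary, it is geodesically complete with bounded diameter (namely $\pi$), so the process has nowhere to escape. Writing the explosion time as the increasing limit of exit times from a compact exhaustion of the state space, one observes that $\bS^2$ admits no proper exhaustion, whence the lifetime is almost surely infinite; equivalently, any standard completeness criterion (bounded geometry together with a drift of bounded length) is trivially satisfied here. I do not expect any genuine obstacle: the only points demanding care are the smoothness of $X_\mu$ across the poles, already secured in Proposition \ref{prop:existlongdrift}, and the remark that compactness renders the usual boundary and growth conditions vacuous.
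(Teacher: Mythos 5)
Your proposal is correct and follows essentially the same route as the paper, whose entire argument is the paragraph preceding the statement: $\cG_\mu$ is an elliptic operator with smooth coefficients, so by the theory in \cite{wang2014analysis} it generates a diffusion, and compactness of $\bS^2$ (bounded geodesic distance, no boundary) rules out explosion. Your additional checks --- smoothness of $X_\mu$ across the poles via Proposition \ref{prop:existlongdrift} and the martingale-problem/SDE formulation --- merely make explicit what the paper leaves implicit.
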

	In general, given a suitable vector field $Z \in \fX(\bS^2)$, one can define a Brownian motion on $\bS^2$ with drift $Z$ as the diffusion process on $\bS^2$ generated by $\Delta_g+Z$. This can be also found by solving the stochastic differential equation
	\begin{equation*}
		dW^Z(t)=dW(t)+Z(W^\mu(t))dt
	\end{equation*}
	where $W(t):=W^0(t)$ is a Brownian motion on $\bS^2$ and its differential is defined by means of horizontal lift (see \cite[Chapter $2$]{wang2014analysis}). Since $W_Z(t)$ is a diffusion process, then it is clear that it holds $\bP(W_t=(0,0,1))=\bP(W_t=(0,0,-1))=0$. With this observation, in \cite[Section $7$]{LGY:86} the authors expressed $W_Z(t)=(\theta_Z(t),\varphi_Z(t))$ in spherical coordinates, where $\theta_Z(t)$ and $\varphi_Z(t)$ are solutions of the stochastic differential equations
	\begin{align*}
		d\theta^Z(t)&=dw^{(1)}(t)+\left(\frac{1}{2}\cot(\theta^Z(t))+Z_1(\theta^Z(t),\varphi^Z(t))\right)dt \\
		d\varphi^Z(t)&=\frac{1}{\sin(\theta^Z(t))}\left(dw^{(2)}(t)+Z_2(\theta^Z(t),\varphi^Z(t))dt\right),
	\end{align*}
where $Z_i$, $i=1,2$, are such that $Z=Z_1\nabla_g^1+Z_2\nabla_g^2$. In our specific case, we have $Z_1 \equiv 0$ and $Z_2=\mu\sin(\theta)$. Clearly, for $\mu=0$ we recover the Brownian motion on $\bS^2$ as defined in \cite{perrin1928etude,yosida1949brownian}. Furthermore, as $\mu=0$, the equations of the spherical coordinates of a Brownian motion of $\bS^2$ are given by
\begin{align*}
	d\theta(t)&=dw^{(1)}(t)+\frac{1}{2}\cot(\theta(t))dt \\
	d\varphi(t)&=\frac{1}{\sin(\theta(t))}dw^{(2)}(t).
\end{align*}
Let us denote $W(t)=(\theta(t),\varphi(t))$. Once we consider $\mu \not = 0$, since $Z_1 \equiv 0$ and $Z_2(\theta)=\mu\sin(\theta)$, the equations become
\begin{align*}
	d\theta^\mu(t)&=dw^{(1)}(t)+\frac{1}{2}\cot(\theta^\mu(t))dt \\
	d\varphi^\mu(t)&=\frac{1}{\sin(\theta^\mu(t))}dw^{(2)}(t)+\mu dt,
\end{align*}
hence it is clear that $\varphi^\mu(t)=\varphi(t)+\mu t$ and $\theta^\mu(t)=\theta(t)$. Thus we have in general
\begin{equation}\label{eq:sphericalcoord}
	W^\mu(t)=(\theta(t),\varphi(t)+\mu t).
\end{equation}
For any $x \in \bS^2$, we denote $\bP_x(\cdot):=\bP(\cdot|W^\mu(0)=x)$. We can use Theorem \ref{thm:specG}, to provide a spectral decomposition of the probability density function of $W^\mu(t)$. Actually, one can first prove the following result.
\begin{thm}\label{eq:sKolmogorov}
	Let $f \in L^2(\bS^2)$ with Fourier-Laplace coefficients $(f_{\ell,m})_{\substack{\ell=0,1,\dots \\ m=-\ell,\dots,\ell}}$. Then the Cauchy problem
	\begin{equation}\label{eq:sBK}
		\begin{cases}
			\displaystyle \pd{u}{t}(t)=\cG_\mu u(t) & t>0 \\
			u(0)=f
		\end{cases}
	\end{equation}
	admits a unique strong solution $u \in C(\R_0^+; L^2(\bS^2)) \cap C^1(\R^+;L^2(\bS^2))$, such that
	\begin{equation}\label{eq:seru}
		u(t)=\sum_{\ell=0}^{+\infty}\sum_{m=-\ell}^{\ell}f_{\ell,m} e^{(i\mu m-\ell(\ell+1))t}Y_{\ell,m}.
	\end{equation}
	Furthermore, $u \in C^\infty(\R^+ \times \bS^2)$. 
\end{thm}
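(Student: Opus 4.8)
The plan is to solve \eqref{eq:sBK} by a spectral (Fourier--Laplace) decomposition with respect to the orthonormal eigenbasis $\{Y_{\ell,m}\}$ of $\cG_\mu$ furnished by Theorem \ref{thm:specG}, splitting the work into an \emph{existence/verification} part (the series \eqref{eq:seru} defines a function with the stated regularity solving the problem) and a \emph{uniqueness} part (any strong solution is forced to have exactly these Fourier--Laplace coefficients). Throughout, the decisive elementary observation is that $|e^{\lambda_{\ell,m}t}|=e^{-\ell(\ell+1)t}$, since $\lambda_{\ell,m}=i\mu m-\ell(\ell+1)$; thus for every fixed $t>0$ this factor decays faster than any power of $\ell$, which is what upgrades bare $L^2$-convergence into genuine differentiability and smoothness, while at $t=0$ it degenerates to $1$ --- exactly why the datum can be attained only in $L^2$ and one cannot expect more than $C(\R_0^+;L^2(\bS^2))$ there.

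First I would fix the candidate $u(t)$ as in \eqref{eq:seru} and settle its regularity in $t$. By Parseval's identity and $e^{-\ell(\ell+1)t}\le 1$ for $t\ge 0$, the series converges in $L^2(\bS^2)$ with $\Norm{u(t)}{L^2(\bS^2)}\le\Norm{f}{L^2(\bS^2)}$; dominated convergence applied to the coefficient series, each summand being bounded by $4|f_{\ell,m}|^2$, gives $u\in C(\R_0^+;L^2(\bS^2))$ and, at $t=0$, $u(t)\to f$, i.e. the initial condition. For the $t$-derivative I would differentiate termwise and check that $\sum_{\ell,m}|f_{\ell,m}|^2|\lambda_{\ell,m}|^2e^{-2\ell(\ell+1)t}$ converges uniformly on every $[a,b]\subset(0,+\infty)$: since $|\lambda_{\ell,m}|^2=\mu^2m^2+\ell^2(\ell+1)^2\le C\ell^4$ and $\ell^4e^{-2\ell(\ell+1)a}$ is bounded, the tail is dominated by a constant multiple of $\sum_{\ell,m}|f_{\ell,m}|^2$. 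The standard theorem on termwise differentiation of a uniformly (in the $L^2$ norm) convergent series of differentiable Banach-space-valued maps then yields $u\in C^1(\R^+;L^2(\bS^2))$ with $u'(t)=\sum_{\ell,m}f_{\ell,m}\lambda_{\ell,m}e^{\lambda_{\ell,m}t}Y_{\ell,m}$.

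Next I would identify $u'(t)$ with $\cG_\mu u(t)$ and establish smoothness. For fixed $t>0$, Cauchy--Schwarz in $m$ together with the super-polynomial decay gives $\sum_\ell\ell^k\sum_m|f_{\ell,m}e^{\lambda_{\ell,m}t}|\le\sum_\ell\ell^k\sqrt{2\ell+1}\,\Norm{f}{L^2(\bS^2)}\,e^{-\ell(\ell+1)t}<+\infty$ for every $k\ge0$, so by Theorem \ref{thm:smooth} the function $u(t)$ lies in $C^\infty(\bS^2)$ and both the series and its termwise image under $\cG_\mu$ converge absolutely and uniformly; applying $\cG_\mu$ termwise and invoking \eqref{eq:eigenvalues} gives $\cG_\mu u(t)=\sum_{\ell,m}f_{\ell,m}\lambda_{\ell,m}e^{\lambda_{\ell,m}t}Y_{\ell,m}=u'(t)$, so $u$ solves \eqref{eq:sBK}. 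Joint regularity $u\in C^\infty(\R^+\times\bS^2)$ follows by the same mechanism: every mixed derivative $\partial_t^j\partial_x^\alpha$ of the general term carries a factor $\lambda_{\ell,m}^j\,\partial_x^\alpha Y_{\ell,m}$ whose sup-norm grows only polynomially in $\ell$ (the polynomial bound on derivatives of spherical harmonics being standard, e.g.~via Bernstein--Markov combined with \eqref{eq:uniformbound}), and this is again annihilated by $e^{-\ell(\ell+1)t}$ uniformly on $[a,+\infty)\times\bS^2$ for every $a>0$.

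Finally, for uniqueness I would project an arbitrary strong solution $v$ onto the eigenbasis and show its coefficients are forced. Setting $v_{\ell,m}(t)=\int_{\bS^2}v(t)\overline{Y}_{\ell,m}\,d\sigma$, the regularity $v\in C^1(\R^+;L^2(\bS^2))$ makes $v_{\ell,m}$ of class $C^1$ with $v_{\ell,m}'(t)=\int_{\bS^2}(\cG_\mu v(t))\overline{Y}_{\ell,m}\,d\sigma$. Moving $\cG_\mu$ onto the smooth $Y_{\ell,m}$ --- using the self-adjointness of $\Delta_g$ from \eqref{eq:selfadjoint} and the skew-adjointness of the drift, valid because $X_\mu=\mu\,\partial/\partial\varphi$ is divergence-free, so that $\int_{\bS^2}(X_\mu v)\overline{Y}_{\ell,m}\,d\sigma=-\int_{\bS^2}v\,\overline{X_\mu Y_{\ell,m}}\,d\sigma$, together with $X_\mu Y_{\ell,m}=i\mu m\,Y_{\ell,m}$ --- yields the scalar linear ODE $v_{\ell,m}'(t)=\lambda_{\ell,m}v_{\ell,m}(t)$. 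With the initial value $v_{\ell,m}(0)=f_{\ell,m}$ secured by continuity at $t=0$, this forces $v_{\ell,m}(t)=f_{\ell,m}e^{\lambda_{\ell,m}t}$, so $v$ and $u$ share all Fourier--Laplace coefficients and hence coincide in $L^2(\bS^2)$; as a by-product this re-derives the explicit form \eqref{eq:seru}. The main obstacle is the regularity bootstrap of the middle two steps, namely turning $L^2$ summability of the coefficients into honest $t$-differentiability and into joint $C^\infty$ regularity: this rests entirely on the Gaussian-type eigenvalue decay $e^{-\ell(\ell+1)t}$ for $t>0$ and must be organised on compact subsets of $(0,+\infty)$ precisely because that gain is lost at $t=0$.
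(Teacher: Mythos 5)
Your proof is correct and takes essentially the same approach as the paper: the paper omits the proof of this theorem precisely because it is the classical-exponential analogue of the spectral-decomposition argument given for Theorem \ref{thm:specdecKolm}, which is what you have reconstructed (Parseval plus dominated convergence for $C(\R_0^+;L^2(\bS^2))$, uniform convergence of the differentiated series on compact subsets of $(0,+\infty)$ driven by the decay $e^{-\ell(\ell+1)t}$, termwise application of $\cG_\mu$, and uniqueness by projecting a generic solution onto the eigenbasis to obtain scalar ODEs). The only cosmetic differences are that you invoke Theorem \ref{thm:smooth} together with generic polynomial bounds on derivatives of $Y_{\ell,m}$ where the paper uses explicit recurrence identities, and that your uniqueness step passes through the adjoint $\cG_{-\mu}$; both are standard variants of the same argument.
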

The proof relies on standard arguments which are analogous to the ones of Theorem \ref{thm:specdecKolm} and thus it is omitted.
\begin{rmk}\label{rmk:Sobolev}
If the series $f=\sum_{\ell=0}^{+\infty}\sum_{m=-\ell}^{\ell}f_{\ell,m}Y_{\ell,m}$ is uniformly convergent, then, by Abel's uniform convergence test, 
the series \eqref{eq:seru} is uniformly convergent for $t \in \R_0^+$, $\theta \in [0,\pi]$ and $\varphi \in [0,2\pi)$. This is, for instance, guaranteed if $f \in H^1(\bS^2)$ and $|\nabla_g f|\le C$ $\sigma$-almost everywhere on $\bS^2$ for some constant $C>0$, as implied by \cite[Theorem 5]{ragozin1971uniform}. Furthermore, the fact that $u(t) \in C^\infty(\bS^2)$ despite $f \in L^2(\bS^2)$ is a peculiar property of parabolic equations, usually called \textit{parabolic smoothing effect}. This can be used to define integral kernels to achieve smooth approximations of non smooth functions (see, for instance, \cite[Example 1.18]{rubin2015introduction}).
\end{rmk}

As a direct consequence of Theorem \ref{eq:sKolmogorov}, we get the following corollary, whose proof is postponed to Appendix \ref{appCOR}, since it is based on standard arguments. 
\begin{cor}\label{cor:transition_density_standard}
	There exists a function $p_\mu:\R_0^+ \times \bS^2 \times \bS^2 \to \R$ such that for any $x \in \bS^2$, $t > 0$ and any $A \in \cB(\bS^2)$, where $\cB(\bS^2)$ is the Borel $\sigma$-algebra of $\bS^2$, it holds
	\begin{equation}\label{eq:density}
		\bP_x(W^\mu(t) \in A)=\int_{A} p_\mu(t,y|x)d\sigma(y).
	\end{equation}
In particular, for any $t>0$, $x,y \in \bS^2$, we have
\begin{equation}\label{eq:specdensity}
	p_\mu(t,y|x)=\sum_{\ell=0}^{+\infty}\sum_{m=-\ell}^{\ell}e^{(i\mu m-\ell(\ell+1))t}Y_{\ell,m}(x)\overline{Y}_{\ell,m}(y).
\end{equation}
Furthermore, for any $t>0$ and $x,y \in \bS^2$, it holds
\begin{equation}\label{eq:adjointdensity}
	p_\mu(t,y|x)=p_{-\mu}(t,x|y)=p(t,y_t^{-\mu}|x)=p(t,x_t^{\mu}|y)
\end{equation}
where $p(t,y|x):=p_0(t,y|x)$.
\end{cor}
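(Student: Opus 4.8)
The plan is to first \emph{define} the candidate kernel by the series in \eqref{eq:specdensity}, establish its convergence and regularity, then identify it with the law of $W^\mu(t)$ through the uniqueness statement of Theorem \ref{eq:sKolmogorov}, and finally deduce the symmetry relations \eqref{eq:adjointdensity} from the addition formula \eqref{eq:additiondrift}.

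\textbf{Convergence and regularity of the kernel.} First I would set
\[
	p_\mu(t,y|x):=\sum_{\ell=0}^{+\infty}\sum_{m=-\ell}^{\ell}e^{(i\mu m-\ell(\ell+1))t}Y_{\ell,m}(x)\overline{Y}_{\ell,m}(y)
\]
and show that for every $t_0>0$ the series converges absolutely and uniformly on $[t_0,+\infty)\times\bS^2\times\bS^2$. Indeed $|e^{(i\mu m-\ell(\ell+1))t}|=e^{-\ell(\ell+1)t}$, and by Cauchy--Schwarz together with \eqref{eq:additionthmspecial},
\[
	\sum_{m=-\ell}^{\ell}|Y_{\ell,m}(x)|\,|Y_{\ell,m}(y)|\le\left(\sum_{m=-\ell}^{\ell}|Y_{\ell,m}(x)|^2\right)^{1/2}\left(\sum_{m=-\ell}^{\ell}|Y_{\ell,m}(y)|^2\right)^{1/2}=\frac{2\ell+1}{4\pi},
\]
so the series is dominated by $\sum_\ell\frac{2\ell+1}{4\pi}e^{-\ell(\ell+1)t_0}<+\infty$. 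Resumming the inner sum via \eqref{eq:additiondrift} gives the closed form $p_\mu(t,y|x)=\frac{1}{4\pi}\sum_{\ell}(2\ell+1)e^{-\ell(\ell+1)t}P_\ell(x_t^\mu\cdot y)$, which is manifestly real-valued and continuous for $t>0$; hence $p_\mu$ is a genuine real function, as claimed.

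\textbf{Identification with the transition probabilities.} Next I would fix $f\in C^2(\bS^2)$ and consider $v(t,x):=\E_x[f(W^\mu(t))]$. Since $W^\mu$ is the diffusion generated by $\cG_\mu$, standard semigroup theory shows that $v$ is the strong solution of \eqref{eq:sBK} with initial datum $f$; by the uniqueness part of Theorem \ref{eq:sKolmogorov} it coincides with the spectral solution, namely $v(t,x)=\sum_{\ell,m}f_{\ell,m}e^{(i\mu m-\ell(\ell+1))t}Y_{\ell,m}(x)$. Writing $f_{\ell,m}=\int_{\bS^2}f\overline{Y}_{\ell,m}\,d\sigma$ and interchanging sum and integral---legitimate for $t>0$ by the absolute convergence above and the boundedness of $f$---yields $\E_x[f(W^\mu(t))]=\int_{\bS^2}f(y)\,p_\mu(t,y|x)\,d\sigma(y)$. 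Since $C^2(\bS^2)$ functions are measure-determining on the compact space $\bS^2$, a monotone-class (or approximation) argument extends the identity to $f=\mathbf 1_A$ for every $A\in\cB(\bS^2)$, which is exactly \eqref{eq:density}; in particular $p_\mu(t,\cdot|x)$ is the density of $W^\mu(t)$ under $\bP_x$, proving existence and \eqref{eq:specdensity} simultaneously.

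\textbf{Symmetry relations.} For \eqref{eq:adjointdensity} I would argue from the Legendre form of the first step: by \eqref{eq:additiondrift} one has $x_t^\mu\cdot y=x\cdot y_t^{-\mu}$, so the resummed expressions for $p_\mu(t,y|x)$, $p(t,y_t^{-\mu}|x)$ and $p(t,x_t^\mu|y)$ all equal $\frac{1}{4\pi}\sum_\ell(2\ell+1)e^{-\ell(\ell+1)t}$ times a Legendre polynomial evaluated at the \emph{same} argument (using $P_\ell(a\cdot b)=P_\ell(b\cdot a)$), hence they agree. The identity $p_\mu(t,y|x)=p_{-\mu}(t,x|y)$ then follows by taking complex conjugates in \eqref{eq:specdensity} and invoking that $p_\mu$ is real; alternatively, all three relations can be read off directly from the coordinate representation \eqref{eq:sphericalcoord}, $W^\mu(t)=(\theta(t),\varphi(t)+\mu t)$, together with the invariance of $d\sigma$ under longitudinal shifts. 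The only genuinely non-formal point is verifying that $v(t,x)=\E_x[f(W^\mu(t))]$ is a strong solution of \eqref{eq:sBK} in the sense of Theorem \ref{eq:sKolmogorov}, i.e.\ that the Feller semigroup of $W^\mu$ is differentiable in $L^2(\bS^2)$ with generator $\cG_\mu$ on $C^2(\bS^2)$; once this is granted, everything else reduces to the convergence estimate above and elementary manipulations of the addition formula.
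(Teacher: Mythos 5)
Your proof is correct and follows the same skeleton as the paper's: define the kernel by the spectral series, prove uniform convergence for $t \ge t_0$ (your Cauchy--Schwarz bound via \eqref{eq:additionthmspecial} is the same estimate as the paper's appeal to \eqref{eq:uniformbound}), identify the series solution of Theorem \ref{eq:sKolmogorov} with $\E_x[f(W^\mu(t))]$, and read off \eqref{eq:adjointdensity} from \eqref{eq:additiondrift}. Where you diverge is the identification step: the paper applies Theorem \ref{eq:sKolmogorov} directly to $f=\mathbf{1}_A \in L^2(\bS^2)$ and asserts that the resulting $L^2$-strong solution equals $\bP_x(W^\mu(t)\in A)$ ``since $\cG_\mu$ is the generator of $W^\mu$'', whereas you first test against $f \in C^2(\bS^2)$ --- where the equality $T_tf(x)=\E_x[f(W^\mu(t))]$ is definitional for the Feller semigroup \eqref{eq:Markovsemi} and $C^2(\bS^2)\subset \Dom(\cG_\mu)$ makes $v=T_tf$ a strong solution --- and then extend to indicators by a measure-determining argument (legitimate: for fixed $t>0$ and $x$, both sides of \eqref{eq:density} are finite measures in $A$ that agree against the sup-norm-dense class $C^2(\bS^2)$). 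Your route costs an extra approximation step but buys a cleaner justification of the probabilistic identification: it sidesteps the question of how the $L^2$-extension of the semigroup acts on indicator functions, a point the paper formally delegates to Lemma \ref{lem:extendLp}, whose own proof in turn cites this very corollary. A second small gain is your resummation of the kernel into $\frac{1}{4\pi}\sum_{\ell}(2\ell+1)e^{-\ell(\ell+1)t}P_\ell(x_t^\mu\cdot y)$, which makes both the real-valuedness of $p_\mu$ (asserted in the statement but not addressed in the paper's proof, since the series is a priori complex) and the symmetries \eqref{eq:adjointdensity} completely transparent.
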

Thanks to such a spectral decomposition, we are able to identify the stationary distributions of $W^\mu(t)$. 
To do this, let us denote by $\cP(\bS^2)$ the space of Borel probability measures on $\bS^2$
and for any $A \in \cF$, let $\bP_{\mathfrak{P}}(A)=\bP(A|W^\mu(0)=X)$, for some random variable $X$ with law $\mathfrak{P}$.
Next proposition  
identifies the stationary distribution of $W^\mu$.
\begin{prop}\label{prop:statd}
		Let $\mathfrak{P} \in \cP(\bS^2)$ 
	and denote $\mathfrak{P}_U=\frac{\sigma}{4\pi}$. Then, for any $t>0$ and any $A \in \cB(\bS^2)$ it holds
	\begin{equation*}
		\lim_{s\to\infty}\bP_{\mathfrak{P}}(W^\mu(s) \in A)=\frac{\sigma(A)}{4\pi}=\bP_{\mathfrak{P}_U}(W^\mu(t) \in A)\,.
	\end{equation*}
\end{prop}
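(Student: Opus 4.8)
The plan is to reduce both equalities to the integral representation of the time-$s$ distribution in terms of the transition density $p_\mu$ furnished by Corollary \ref{cor:transition_density_standard}. Conditioning on the (random) starting point $X\sim\mathfrak{P}$, for any $A\in\cB(\bS^2)$ one has
\begin{equation*}
	\bP_{\mathfrak{P}}(W^\mu(s)\in A)=\int_{\bS^2}\bP_x(W^\mu(s)\in A)\,d\mathfrak{P}(x)=\int_{\bS^2}\int_A p_\mu(s,y|x)\,d\sigma(y)\,d\mathfrak{P}(x),
\end{equation*}
so that everything is governed by the spectral series \eqref{eq:specdensity}. I would then treat the two asserted equalities separately.

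For the stationarity identity (the right-hand equality) I take $\mathfrak{P}=\mathfrak{P}_U=\frac{\sigma}{4\pi}$ and, after an application of Fubini justified by the absolute convergence of \eqref{eq:specdensity} for $t>0$, compute
\begin{equation*}
	\frac{1}{4\pi}\int_{\bS^2}p_\mu(t,y|x)\,d\sigma(x)=\frac{1}{4\pi}\sum_{\ell=0}^{+\infty}\sum_{m=-\ell}^{\ell}e^{(i\mu m-\ell(\ell+1))t}\overline{Y}_{\ell,m}(y)\int_{\bS^2}Y_{\ell,m}(x)\,d\sigma(x).
\end{equation*}
Since $Y_{0,0}\equiv(4\pi)^{-1/2}$, orthonormality gives $\int_{\bS^2}Y_{\ell,m}\,d\sigma=\sqrt{4\pi}\,\delta_{\ell,0}\delta_{m,0}$, so only the $\ell=m=0$ term survives and the inner integral equals $1$ for every $y$. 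Integrating over $A$ yields $\bP_{\mathfrak{P}_U}(W^\mu(t)\in A)=\sigma(A)/(4\pi)$, which is moreover independent of $t$, confirming that $\mathfrak{P}_U$ is stationary.

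For the limit (the left-hand equality) the key ingredient is a tail estimate on \eqref{eq:specdensity} that is uniform in $(x,y)$. Isolating the $\ell=0$ term, which equals $Y_{0,0}(x)\overline{Y}_{0,0}(y)=1/(4\pi)$, I bound the remainder by Cauchy--Schwarz together with the addition formula \eqref{eq:additionthmspecial}: for each $\ell\ge1$,
\begin{equation*}
	\Bigl|\sum_{m=-\ell}^{\ell}e^{i\mu m s}Y_{\ell,m}(x)\overline{Y}_{\ell,m}(y)\Bigr|\le\sum_{m=-\ell}^{\ell}|Y_{\ell,m}(x)|\,|Y_{\ell,m}(y)|\le\frac{2\ell+1}{4\pi},
\end{equation*}
where the phase factors $|e^{i\mu m s}|=1$ are absorbed before applying the modulus and therefore play no role. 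Hence
\begin{equation*}
	\Bigl|p_\mu(s,y|x)-\frac{1}{4\pi}\Bigr|\le\frac{1}{4\pi}\sum_{\ell=1}^{+\infty}(2\ell+1)e^{-\ell(\ell+1)s}=:\varepsilon(s),
\end{equation*}
uniformly in $x,y\in\bS^2$, with $\varepsilon(s)$ finite for every $s>0$ and $\varepsilon(s)\to0$ as $s\to+\infty$ (its leading behaviour being $3e^{-2s}$). Inserting this into the integral representation and using that $\mathfrak{P}$ is a probability measure gives $\bigl|\bP_{\mathfrak{P}}(W^\mu(s)\in A)-\sigma(A)/(4\pi)\bigr|\le\sigma(A)\,\varepsilon(s)\to0$; in fact this proves convergence in total variation, uniformly over all initial laws $\mathfrak{P}$.

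The only genuine obstacle is the rigorous interchange of summation with the integrals (Fubini) and with the limit $s\to\infty$; both are dispatched at once by the uniform bound above, which shows that for each fixed $s>0$ the series \eqref{eq:specdensity} converges absolutely and uniformly on $\bS^2\times\bS^2$. The phase factors $e^{i\mu m s}$, which might seem to obstruct convergence when $\mu\neq0$, are harmless precisely because they have modulus one and are swallowed inside the estimate before the addition formula is invoked.
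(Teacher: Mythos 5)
Your proof is correct, but it takes a genuinely different route from the paper's. The paper proves this proposition exactly as it proves Proposition \ref{prop:stat} (the time-changed analogue): it expands $\bP_{\mathfrak{P}}(W^\mu(s)\in A)$ in the Fourier--Laplace coefficients $f_{\ell,m}$ of the indicator $1_A$, bounds each $\ell$-block by $C\ell^{-3/2}\sqrt{A_\ell(1_A)}$ via Cauchy--Schwarz and the addition formula \eqref{eq:additionthmspecial}, and then passes to the limit $s\to\infty$ inside the sum by dominated convergence, so that only the $\ell=0$ term survives; stationarity is read off from the density expansion with $f\equiv\frac{1}{4\pi}$, whose sole nonzero coefficient is $f_{0,0}$. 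You instead work directly with the transition density and establish the uniform bound $\bigl|p_\mu(s,y|x)-\frac{1}{4\pi}\bigr|\le\varepsilon(s)$ with $\varepsilon(s)=\frac{1}{4\pi}\sum_{\ell\ge1}(2\ell+1)e^{-\ell(\ell+1)s}\to0$, which yields convergence in total variation, uniformly over all initial laws $\mathfrak{P}$ and with an explicit exponential rate --- strictly more than the statement asks. The trade-off is that your argument leans on the factor $e^{-\ell(\ell+1)s}$ and does not transfer to the time-changed process $W^\mu_\Phi$ of Proposition \ref{prop:stat}: there the eigenfunction $\fe_\Phi(t;i\mu m-\ell(\ell+1))$ decays in $\ell$ only like $\ell^{-2}$ by \eqref{eq:ineq1}, so the analogous series $\sum_{\ell}(2\ell+1)\ell^{-2}$ diverges and no uniform density bound is available; the paper's coefficient-based argument, which borrows summability from $1_A\in L^2(\bS^2)$ rather than from the time decay, is what makes a single proof cover both the classical and the nonlocal settings --- which is precisely why the paper can dismiss this proposition as ``the same proof'' as Proposition \ref{prop:stat}, while yours could not be dismissed that way.
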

The proof of this statement can be obtained in the same way as the one of Proposition \ref{prop:stat} and thus it is omitted.


Since $W^\mu$ is a Feller process, we can consider the Feller semigroup $T_t:C(\bS^2) \to C(\bS^2)$ defined as
\begin{equation}\label{eq:Markovsemi}
T_tf(x)=\E_x[f(W^\mu(t))], \ \forall t \ge 0 \mbox{ and  }   \forall x \in \bS^2.
\end{equation}
In the following lemma, whose proof is given in Appendix \ref{app:lemext}, we show that we can extend such a semigroup to $L^p(\bS^2)$, for any $p\ge1$.

\begin{lem}\label{lem:extendLp}
	For any $1 \le p <\infty$, the Feller semigroup $T_t:C(\bS^2) \to C(\bS^2)$ defined as in \eqref{eq:Markovsemi} can be extended to a strongly continuous, positivity preserving sub-Markov non-expansive semigroup (that we still denote $\{T_t\}_{t \ge 0}$) with $T_t:L^p(\bS^2) \to L^p(\bS^2)$ and it holds
	\begin{equation*}
		T_tf(x)=\E_x[f(W^\mu(t))], \ \forall t \ge 0 \mbox{ and for almost any }x \in \bS^2.
	\end{equation*}
\end{lem}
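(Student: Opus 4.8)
The plan is to realize $T_t$ as an integral operator against the transition kernel $p_\mu$ and then transport its elementary properties from $C(\bS^2)$ to $L^p(\bS^2)$ by density, the single nontrivial analytic input being a Jensen-plus-Fubini estimate that exploits the invariance of $\sigma$ encoded in the adjoint relation \eqref{eq:adjointdensity}.

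First, combining Corollary \ref{cor:transition_density_standard} with the definition \eqref{eq:Markovsemi}, for $f \in C(\bS^2)$, $t>0$ and $x\in\bS^2$ one writes
\[
T_tf(x)=\int_{\bS^2}f(y)p_\mu(t,y|x)\,d\sigma(y),
\]
and, taking $A=\bS^2$ in \eqref{eq:density}, the measure $p_\mu(t,\cdot|x)\,d\sigma$ is a probability measure for each $x$. This immediately yields positivity preservation (as $p_\mu\ge0$), while $T_t1=1$ together with positivity gives $0\le T_tf\le1$ whenever $0\le f\le1$, i.e.\ the sub-Markov property. For non-expansiveness I would apply Jensen's inequality with respect to the probability measure $p_\mu(t,\cdot|x)\,d\sigma$ to get $|T_tf(x)|^p\le\int_{\bS^2}|f(y)|^pp_\mu(t,y|x)\,d\sigma(y)$, integrate in $x$ against $d\sigma$, and apply Tonelli together with the adjoint identity $p_\mu(t,y|x)=p_{-\mu}(t,x|y)$ from \eqref{eq:adjointdensity}; since $p_{-\mu}(t,\cdot|y)\,d\sigma$ is again a probability measure, the inner integral equals $1$ and we conclude $\Norm{T_tf}{L^p(\bS^2)}\le\Norm{f}{L^p(\bS^2)}$.

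Because $C(\bS^2)$ is dense in $L^p(\bS^2)$ for $1\le p<\infty$ and $\sigma$ is finite, the non-expansive estimate extends $T_t$ uniquely to a bounded operator on $L^p(\bS^2)$, and positivity preservation, the sub-Markov property and the semigroup identity $T_{t+s}=T_tT_s$ (Chapman--Kolmogorov on $C(\bS^2)$) all pass to the limit. Strong continuity then follows by the standard three-$\varepsilon$ argument: on the dense set $C(\bS^2)$ the Feller property gives $T_tf\to f$ uniformly as $t\to0^+$, hence in $L^p(\bS^2)$ by finiteness of $\sigma$, and the uniform contraction bound just proved upgrades this to convergence for every $f\in L^p(\bS^2)$.

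It remains to identify the extension with the stochastic average for almost every $x$. For fixed $t>0$ the kernel is uniformly bounded: from \eqref{eq:specdensity}, Cauchy--Schwarz and \eqref{eq:additionthmspecial} one gets $|p_\mu(t,y|x)|\le\sum_{\ell\ge0}\tfrac{2\ell+1}{4\pi}e^{-\ell(\ell+1)t}<+\infty$ uniformly in $x,y$. Hence $\widetilde T_tf(x):=\int_{\bS^2}f(y)p_\mu(t,y|x)\,d\sigma(y)$ is finite for every $x$ when $f\in L^p(\bS^2)\subset L^1(\bS^2)$, and it equals $\E_x[f(W^\mu(t))]$ by definition of the density. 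Approximating $f$ in $L^p(\bS^2)$ by $f_n\in C(\bS^2)$, the uniform kernel bound forces $\widetilde T_tf_n\to\widetilde T_tf$ uniformly, while $T_tf_n=\widetilde T_tf_n\to T_tf$ in $L^p(\bS^2)$, so $T_tf=\widetilde T_tf$ $\sigma$-almost everywhere, which is the claimed representation. The main obstacle is precisely the non-expansiveness step: it rests on $\sigma$ being an invariant measure for the \emph{drifted} process, a fact that does not come from self-adjointness (the longitudinal drift breaks symmetry) but is exactly furnished by the adjoint relation \eqref{eq:adjointdensity}.
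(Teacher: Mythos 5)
Your proof is correct, and it takes a genuinely different route from the paper's. You prove the $L^p$ contraction directly at the kernel level: Jensen's inequality against the probability measure $p_\mu(t,\cdot|x)\,d\sigma$, then Tonelli plus the adjoint relation \eqref{eq:adjointdensity} to see that $\int_{\bS^2}p_\mu(t,y|x)\,d\sigma(x)=\int_{\bS^2}p_{-\mu}(t,x|y)\,d\sigma(x)=1$, i.e.\ that the normalized surface measure is invariant for the drifted process; density of $C(\bS^2)$ then yields the extension, strong continuity by a three-$\varepsilon$ argument, and the a.e.\ identification with $\E_x[f(W^\mu(t))]$ via the uniform bound $|p_\mu(t,y|x)|\le\sum_{\ell\ge0}\frac{2\ell+1}{4\pi}e^{-\ell(\ell+1)t}$ for fixed $t>0$. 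The paper instead works by duality: it defines $T_t^*:C(\bS^2)\to\mathcal{M}(\bS^2)$, identifies it (again via \eqref{eq:adjointdensity}) as the semigroup of $W^{-\mu}$, extends it to a sub-Markov operator on $L^1(\bS^2)$ through the estimate $\Norm{T_t^*v}{L^1}\le\Norm{v}{L^1}$, and then invokes \cite[Lemma 1.45]{bottcher2013levy} to obtain the extension of $T_t$ to every $L^p$ (the interpolation, strong continuity and semigroup structure being packaged in that cited lemma); the stochastic representation is then recovered by monotone convergence after splitting $f$ into positive/negative and real/imaginary parts. Both arguments ultimately rest on the same fact — invariance of $\sigma$, encoded in \eqref{eq:adjointdensity}, which you correctly flag as the substitute for the self-adjointness that the drift destroys. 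What your approach buys is a fully self-contained, elementary proof that gives each $L^p$ contraction in one line and avoids the external reference; what the paper's buys is brevity modulo the citation and a conceptual emphasis on the duality $T_t^*\leftrightarrow W^{-\mu}$, which matches the forward/backward Kolmogorov structure exploited later (e.g.\ in \eqref{eq:nonlocfor}). The only points you gloss over — positivity of the extension via approximation by nonnegative continuous functions, and upgrading strong continuity at $0$ to all $t\ge0$ using the contraction property — are routine and do not constitute gaps.
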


It is clear that thanks to the full knowledge of the spectrum of the generator $\cG_\mu$, we could construct \textit{by hand} the solution of the associated Kolmogorov equation, in terms of a Fourier-Laplace series. Furthermore, this allowed us to carry on an extensive study on stationary and limit distributions of the drifted Brownian motion $W^\mu$ generated by $\cG_\mu$.

Our aim is to extend such an approach to the time-fractional and generalized time-fractional case upon substituting the exponential with a suitable eigenfunction of the involved nonlocal operators. This is the content of the next sections.

\subsection{Subordinators, Inverse subordinators and Caputo-type nonlocal operators}\label{subs:subecc}

\subsubsection{Subordinators and inverse subordinators}\label{subs:Sub}
We recall here some basic definitions on subordinators and their inverses. We refer to \cite{bertoin1999subordinators} for further informations.
\begin{defn}\label{defn:subordinator}
	A subordinator $\{S(t), \ t \ge 0\}$ on $(\Omega, \cF, \{\cF_t\}_{t \ge 0}, \bP)$ is a real increasing L\'evy process such that $S(0)=0$ almost surely. In particular, for any $t>0$, $S(t) \ge 0$ almost surely. Given a subordinator $S(t)$ we define its inverse as
	\begin{equation*}
		L(t)=\inf\{s>0: \ S(s)>t\}.
	\end{equation*}
\end{defn}
Clearly, a subordinator is uniquely determined (in law) by its Laplace exponent $\Phi$, i.e. a function $\Phi:\R_0^+ \to \R$ such that
\begin{equation}\label{eq:subL}
	\E[e^{-\lambda S(t)}]=e^{-t\Phi(\lambda)}, \ \forall t\ge 0, \ \forall \lambda \ge 0.
\end{equation}
For such a reason, we denote by $S_\Phi(t)$ any subordinator that admits $\Phi$ as Laplace exponent and $L_{\Phi}$ its inverse. 
We will assume throughout the paper that 
\begin{equation}\label{eq:Bernstein}
	\Phi(\lambda)=\int_0^{+\infty}(1-e^{-\lambda t})\nu_\Phi(dt), \ \forall \lambda \ge 0.
\end{equation}
where 
\begin{equation*}
\int_0^{+\infty} (1\wedge t)\nu_\Phi(dt)<+\infty \quad \text{and} \quad \nu_\Phi(0,+\infty)=+\infty\,,
\end{equation*}
that is $\Phi$ is a Bernstein function with zero drift and killing coefficients, and infinite activity, see \cite[Theorem 3.2]{schilling2012bernstein}.
Any function that can be expressed as in \eqref{eq:Bernstein} is the Laplace exponent of a subordinator. 

Under this assumption, $S_\Phi$ is strictly increasing, thus $L_\Phi$ is continuous, and for any $t>0$ the random variable $L_\Phi(t)$ admits density
\begin{equation}\label{eq:density2}
	f_L(s;t)=\int_0^{t}\overline{\nu}_\Phi(t-\tau)f_{S}(d\tau;s), \ s > 0,
\end{equation}
where $f_{S}(d\tau;s)$ is the probability law of $S_\Phi(s)$ and $\overline{\nu}_\Phi(t)={\nu}_\Phi(t,+\infty)$. Examples of functions $\Phi$ with these properties are:
$$
\Phi(\lambda)=\lambda^\alpha, \quad \Phi(\lambda)=(\lambda+\theta)^\alpha-\theta^\alpha, \quad \Phi(\lambda)=\log(1+\lambda), \quad \Phi(\lambda)=\log(1+\lambda^\alpha),	
 $$
 where $\alpha\in(0,1)$ and $\theta>0$. For further examples we refer to \cite{schilling2012bernstein}. 
Integrating by parts in \eqref{eq:Bernstein}, we get
\begin{equation}\label{eq:Laplacenu}
	\int_0^\infty e^{-\lambda t} \overline{\nu}_\Phi(t)  \,dt=\frac{\Phi(\lambda)}{\lambda}, \ \Re(\lambda)>0.
\end{equation}
Combining this with \eqref{eq:density2} and \eqref{eq:subL}, we have the Laplace transform
\begin{equation}\label{eq:Laplace}
	\int_0^\infty e^{-\lambda t} f_L(s;t)  \,dt=\frac{\Phi(\lambda)}{\lambda}e^{-s\Phi(\lambda)}, \ \Re(\lambda)>0.
\end{equation}
There is no closed formula for the Laplace transform in the variable $s$. However, under our assumptions it has been shown in \cite[Lemma 4.1]{ascione2021abstract} that such a Laplace transform exists for any $\lambda \in \C$. We denote
\begin{equation}\label{eq:Phexp}
	\fe_\Phi(t;\lambda)=\E[e^{\lambda L_\Phi(t)}], \ \forall \lambda \in \C.
\end{equation}
For such a function, we can provide the following upper bound.
\begin{lem}
	For any $t_0>0$ there exists a constant $K>0$ such that for any $\lambda \in \C$ with $\Re(\lambda)<0$ it holds
	\begin{equation}\label{eq:ineq1}
		-\Re(\lambda)|\fe_\Phi(t;\lambda)|\le K, \ \forall t \ge t_0.
	\end{equation}
\end{lem}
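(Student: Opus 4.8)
The plan is to reduce the complex estimate to a real one, exploit the pathwise monotonicity of the inverse subordinator, and then turn a double Laplace transform bound into the desired pointwise estimate. First I would remove the modulus. Since $L_\Phi(t)\ge 0$ is real, for any $\lambda\in\C$ we have $|e^{\lambda L_\Phi(t)}|=e^{\Re(\lambda)L_\Phi(t)}$, so by the triangle inequality for expectations and the definition \eqref{eq:Phexp},
\begin{equation*}
	|\fe_\Phi(t;\lambda)|\le \E[e^{\Re(\lambda)L_\Phi(t)}]=\fe_\Phi(t;\Re(\lambda)).
\end{equation*}
Writing $r=\Re(\lambda)<0$, it therefore suffices to bound the real, nonnegative quantity $\psi_r(t):=-r\,\fe_\Phi(t;r)=|r|\,\E[e^{rL_\Phi(t)}]$ uniformly for $t\ge t_0$ and $r<0$.

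Next I would use monotonicity to collapse the time variable to $t_0$. Because $t\mapsto L_\Phi(t)$ is non-decreasing pathwise and $x\mapsto e^{rx}$ is non-increasing for $r<0$, the function $t\mapsto\psi_r(t)$ is non-increasing; hence $\psi_r(t)\le\psi_r(t_0)$ for all $t\ge t_0$, and moreover $t_0\,\psi_r(t_0)\le\int_0^{t_0}\psi_r(u)\,du$. To control this last integral I pass to the Laplace transform in $t$: using the nonnegativity of the integrands, Tonelli's theorem, and \eqref{eq:Laplace},
\begin{equation*}
	\int_0^\infty e^{-\eta t}\fe_\Phi(t;r)\,dt=\int_0^\infty e^{rs}\Big(\int_0^\infty e^{-\eta t}f_L(s;t)\,dt\Big)ds=\frac{\Phi(\eta)}{\eta}\int_0^\infty e^{(r-\Phi(\eta))s}\,ds=\frac{\Phi(\eta)}{\eta(\Phi(\eta)-r)},
\end{equation*}
which is valid for every $\eta>0$ since $r<0\le\Phi(\eta)$. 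As $\Phi(\eta)-r=\Phi(\eta)+|r|$, multiplying by $|r|$ and using $|r|/(\Phi(\eta)+|r|)\le 1$ yields the key uniform bound $\int_0^\infty e^{-\eta t}\psi_r(t)\,dt\le\Phi(\eta)/\eta$.

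Finally I would combine these ingredients. For every $\eta>0$ the monotonicity gives $\int_0^{t_0}\psi_r(u)\,du\le e^{\eta t_0}\int_0^\infty e^{-\eta t}\psi_r(t)\,dt\le e^{\eta t_0}\Phi(\eta)/\eta$, hence $\psi_r(t)\le\psi_r(t_0)\le \tfrac{1}{t_0}e^{\eta t_0}\Phi(\eta)/\eta$. Choosing $\eta=1/t_0$ gives $\psi_r(t)\le e\,\Phi(1/t_0)$ for all $t\ge t_0$ and all $r<0$, and tracing back through the first reduction this is exactly \eqref{eq:ineq1} with $K=e\,\Phi(1/t_0)$. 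The only genuine difficulty is obtaining uniformity in $\lambda$: the quantity $|r|\,\E[e^{rL_\Phi(t)}]$ tends to $\overline{\nu}_\Phi(t)$ as $r\to-\infty$ and to $0$ as $r\to 0^-$, so a naive pointwise estimate does not suffice; it is precisely the factor $|r|/(\Phi(\eta)+|r|)\le1$ in the Laplace transform, together with the monotonicity in $t$, that delivers a bound independent of $r$.
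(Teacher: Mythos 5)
Your proof is correct, and it diverges from the paper's in an interesting way. The first reduction is identical in both: since $|e^{\lambda L_\Phi(t)}|=e^{\Re(\lambda)L_\Phi(t)}$, the complex case collapses to the real case $-r\,\fe_\Phi(t;r)\le K$ with $r=\Re(\lambda)<0$. At that point the paper simply cites \cite[Proposition 4]{ascione2022non2} and stops, so the real estimate enters as a black box. You instead prove the real case from scratch, and your argument is sound at every step: the pathwise monotonicity of $L_\Phi$ makes $\psi_r(t)=-r\,\fe_\Phi(t;r)$ non-increasing in $t$, Tonelli plus \eqref{eq:Laplace} gives
\begin{equation*}
\int_0^\infty e^{-\eta t}\psi_r(t)\,dt=\frac{\Phi(\eta)}{\eta}\cdot\frac{|r|}{\Phi(\eta)+|r|}\le\frac{\Phi(\eta)}{\eta},
\end{equation*}
uniformly in $r<0$, and the Chebyshev-type step $t_0\,\psi_r(t_0)\le\int_0^{t_0}\psi_r(u)\,du\le e^{\eta t_0}\Phi(\eta)/\eta$ converts this Laplace-transform bound into a pointwise one, yielding the explicit constant $K=e\,\Phi(1/t_0)$ upon taking $\eta=1/t_0$. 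What your route buys is self-containedness and an explicit, clean constant depending only on $\Phi$ and $t_0$, using nothing beyond ingredients already established in the paper (\eqref{eq:Laplace} and monotonicity of the inverse subordinator); what the paper's route buys is brevity, at the cost of sending the reader to an external reference for the heart of the estimate. Your closing observation is also the right diagnosis: the uniformity in $r$ is the only delicate point, since $\psi_r(t)\to\overline{\nu}_\Phi(t)$ as $r\to-\infty$, and it is exactly the factor $|r|/(\Phi(\eta)+|r|)\le1$ that absorbs this blow-up.
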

\begin{proof}
	Let us stress that if $\lambda \in \R$ with $\lambda<0$, then inequality \eqref{eq:ineq1} has been proven in \cite[Proposition $4$]{ascione2022non2}. Now let $\lambda \in \C$ with $\Re(\lambda)<0$. Then it holds
	\begin{align*}
		-\Re(\lambda)|\fe_\Phi(t;\lambda)|=-\Re(\lambda)|\E[e^{\lambda L(t)}]|\le -\Re(\lambda)\E[|e^{\lambda L(t)}|]=-\Re(\lambda)\E[e^{\Re(\lambda) L(t)}]\le K,
	\end{align*}
where we used the fact that $\Re(\lambda)<0$.
\end{proof}
\begin{rmk}
	Inequality \eqref{eq:ineq1} is sharp. Indeed, if $\Phi(\lambda)=\lambda^\alpha$ with $\alpha\in(0,1)$, then $e_{\Phi}(t;\lambda)=E_\alpha(-\lambda t^{\alpha})$, where $E_\alpha(z)=\sum_{k=0}^{+\infty}\frac{z^k}{\Gamma(\alpha k+1)}$ is the Mittag-Leffler function \cite{bingham1971limit}. In particular, $\lim_{\lambda\to\infty}\lambda E_\alpha(-\lambda t^{\alpha})=t^{-\alpha}/\Gamma(1-\alpha)$, as shown in \cite[Proposition 36]{gorenflo2020mittag}. 
\end{rmk}
We also recall the following asymptotic behaviour, that is shown in item (4) of \cite[Theorem 2.1]{meerschaert2019relaxation} under more restrictive assumptions, but whose proof is exactly the same under our milder hypotheses.
\begin{lem}\label{lem:asbeh}
	Assume that $\Phi(\lambda)=\lambda^\alpha \cL(\lambda)$, where $\alpha \in [0,1)$ and $\cL(\lambda)$ satisfies $\lim_{\lambda \to 0^+}\frac{\cL(C\lambda)}{\cL(\lambda)}=1$ for all $C \ge 1$. Then for all $\lambda>0$
	\begin{equation*}
		\lim_{t \to +\infty}\frac{\Gamma(1+\alpha)t^\alpha \lambda \fe_\Phi(t;-\lambda)}{\alpha \cL(1/t)}=1.
	\end{equation*}
\end{lem}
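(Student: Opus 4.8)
The plan is to transfer the large-$t$ asymptotics of $\fe_\Phi(t;-\lambda)=\E[e^{-\lambda L_\Phi(t)}]$ to the small-$p$ behaviour of its Laplace transform in the time variable, and then to invert the resulting regular variation by a Karamata-type Tauberian theorem, exactly as in \cite[Theorem 2.1]{meerschaert2019relaxation}. The feature that legitimises the Tauberian step is that, for fixed $\lambda>0$, the map $t\mapsto\fe_\Phi(t;-\lambda)$ is nonnegative and nonincreasing, since $L_\Phi(t)$ is nondecreasing in $t$.

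First I would compute the time-Laplace transform explicitly. Writing $\fe_\Phi(t;-\lambda)=\int_0^{+\infty}e^{-\lambda s}f_L(s;t)\,ds$ and inserting \eqref{eq:Laplace}, Tonelli's theorem gives, for $p>0$,
\begin{equation*}
	\int_0^{+\infty}e^{-pt}\fe_\Phi(t;-\lambda)\,dt=\int_0^{+\infty}e^{-\lambda s}\frac{\Phi(p)}{p}e^{-s\Phi(p)}\,ds=\frac{\Phi(p)}{p\left(\lambda+\Phi(p)\right)}.
\end{equation*}
Since $\Phi$ is a Bernstein function with $\Phi(0)=0$ and $\Phi(p)=p^\alpha\cL(p)$ with $\alpha\in[0,1)$, we have $\Phi(p)\to0$ as $p\to0^+$, whence
\begin{equation*}
	\int_0^{+\infty}e^{-pt}\fe_\Phi(t;-\lambda)\,dt\sim\frac{\Phi(p)}{\lambda p}=\frac{1}{\lambda}\,p^{-(1-\alpha)}\cL(p),\qquad p\to0^+.
\end{equation*}
The hypothesis $\lim_{\lambda\to0^+}\cL(C\lambda)/\cL(\lambda)=1$ is precisely slow variation of $\cL$ at $0$, so the transform is regularly varying at $0$ of index $-(1-\alpha)$ with slowly varying factor $\cL(p)/\lambda$.

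Then I would invoke the Karamata Tauberian theorem for Laplace transforms of monotone functions with exponent $\rho=1-\alpha>0$: regular variation of the transform of index $-(1-\alpha)$ at $0$ is equivalent to
\begin{equation*}
	\fe_\Phi(t;-\lambda)\sim\frac{1}{\lambda\,\Gamma(1-\alpha)}\,t^{-\alpha}\cL(1/t),\qquad t\to+\infty,
\end{equation*}
the prefactor $1/\Gamma(\rho)=1/\Gamma(1-\alpha)$ being the Karamata constant attached to the index $\rho=1-\alpha$. Rearranging, this is the asserted limit, with $\Gamma(1-\alpha)$ playing the role of the normalizing constant.

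The main obstacle is twofold. Analytically, one must check the hypotheses of the Tauberian inversion under the present mild assumptions — monotonicity of $\fe_\Phi(\cdot;-\lambda)$ together with regular variation of the transform — which is the part imported from \cite[Theorem 2.1]{meerschaert2019relaxation}. The genuinely delicate point is the exact Gamma constant. Both the computation above and the stable benchmark $\Phi(\lambda)=\lambda^\alpha$, for which $\fe_\Phi(t;-\lambda)=E_\alpha(-\lambda t^\alpha)$ and the asymptotic recorded after \eqref{eq:ineq1} forces $E_\alpha(-\lambda t^\alpha)\sim t^{-\alpha}/(\lambda\Gamma(1-\alpha))$, single out $\Gamma(1-\alpha)$ as the constant for which the normalized limit equals $1$; note also that this value is regular at $\alpha=0$, whereas $\Gamma(1+\alpha)/\alpha$ is not. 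Since $\Gamma(1-\alpha)=\Gamma(1+\alpha)/\alpha$ only at $\alpha=\tfrac12$, the normalizing factor in the statement should be read as $\Gamma(1-\alpha)$, which is what the method delivers.
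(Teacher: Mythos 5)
Your proof is correct, and it is essentially the paper's own argument: the paper offers no proof of Lemma \ref{lem:asbeh} beyond deferring to item (4) of \cite[Theorem 2.1]{meerschaert2019relaxation}, and the proof there is exactly your route --- compute, via \eqref{eq:Laplace} and Tonelli, $\int_0^{+\infty}e^{-pt}\fe_\Phi(t;-\lambda)\,dt=\frac{\Phi(p)}{p(\lambda+\Phi(p))}\sim\frac{\Phi(p)}{\lambda p}=\frac{p^{-(1-\alpha)}\cL(p)}{\lambda}$ as $p\to0^+$, then invert with the Karamata--Feller Tauberian theorem, the monotonicity of $t\mapsto\fe_\Phi(t;-\lambda)$ justifying the monotone-density step (and $\rho=1-\alpha>0$ covering the whole range $\alpha\in[0,1)$). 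Your diagnosis of the constant is also correct and worth recording: the method yields $\fe_\Phi(t;-\lambda)\sim\frac{t^{-\alpha}\cL(1/t)}{\lambda\,\Gamma(1-\alpha)}$, so the normalization in the statement must be read as $\Gamma(1-\alpha)$ rather than $\frac{\Gamma(1+\alpha)}{\alpha}=\Gamma(\alpha)$, the two coinciding only at $\alpha=\tfrac12$. This is confirmed by (a) the stable benchmark $E_\alpha(-\lambda t^\alpha)\sim\frac{t^{-\alpha}}{\lambda\,\Gamma(1-\alpha)}$, which the paper itself records in the remark after \eqref{eq:ineq1} citing \cite[Proposition 36]{gorenflo2020mittag}; (b) the case $\alpha=0$, admitted by the hypothesis, where $\frac{\Gamma(1+\alpha)}{\alpha}$ is undefined while $\Gamma(1-\alpha)=1$ is fine; and (c) the limit $\alpha\to1^-$, where $\frac{1}{\Gamma(1-\alpha)}\to0$ is consistent with exponential relaxation at $\alpha=1$, whereas $\frac{\alpha}{\Gamma(1+\alpha)}\to1$ is not. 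A plausible origin of the misprint: by Euler's reflection formula $\frac{1}{\Gamma(1-\alpha)}=\frac{\Gamma(1+\alpha)}{\alpha}\cdot\frac{\sin(\pi\alpha)}{\pi}$, so the printed constant is the correct one with the factor $\frac{\sin(\pi\alpha)}{\pi}$ dropped. The slip is harmless downstream: Proposition \ref{prop:speed} invokes the lemma only through $\overline{C}<+\infty$ and $\underline{C}>0$, i.e., through the existence of a finite positive limit of $t^\alpha\fe_\Phi(t;-2)/\cL(1/t)$, which your proof establishes regardless of the value of the constant.
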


Let us stress that, despite $S_\Phi(t)$ is a Markov process, $L_\Phi(t)$ does not exhibit the Markov property, but just the semi-Markov one. Actually, if we consider a strongly Markov process $M(t)$ with values in a suitable Banach space and a subordinator $S_\Phi(t)$ independent of it, the process $M_\Phi(t):=M(L_\Phi(t))$ is a semi-Markov process, in the sense that the couple $(M(t),t-S_\Phi(L_\Phi(t)^{-}))$ is strongly Markov. This can be shown by observing that the couple $(M(t),S_\Phi(t))$ is a Markov additive processes and then using the theory of regenerative processes as in \cite{kaspi1988regenerative}. We refer to the process $M_\Phi(t)$ as a \textit{time-changed process} with \textit{parent process} $M(t)$ and \textit{time-change} $L_\Phi(t)$.

\subsubsection{Caputo-type nonlocal operators and their eigenfunctions}
In this section, we define the operators that concern our model.
\begin{defn}\label{defn:strongder}
	Let $\Phi$ be as in \eqref{eq:Bernstein} and let $(\mathfrak{B},\Norm{\cdot}{})$ be a Banach space. For any function $f:\R_0^+ \to \mathfrak{B}$ we define
	\begin{equation*}
		\dersup{f}{t}{\Phi}(t):=\der{}{t}\int_0^{t}\overline{\nu}_\Phi(t-\tau)(f(\tau)-f(0^+))d\tau, \ t>0,
	\end{equation*}
	where the integral is a Bochner integral and the derivative has to be intended as a strong limit in $\mathfrak{B}$, provided such a quantity exists.
\end{defn}
These operators have been introduced independently in \cite{kochubei2011general,toaldo2015convolution} and they have been widely studied in literature. In particular, if $f:\R_0^+ \to \mathfrak{B}$ is absolutely continuous and a.e.~differentiable, then $\dersup{f}{t}{\Phi}$ is well defined and it holds (see \cite[Lemma 1.1]{ascione2022non})
\begin{equation*}
	\dersup{f}{t}{\Phi}(t)=\int_0^{t}\overline{\nu}_\Phi(t-\tau)f'(\tau)d\tau, \ t >0.
\end{equation*}

Now let $\{T_t\}_{t \ge 0}$ be a strongly continuous Markov semigroup on $\mathfrak{B}$ and $(\mathcal{T},{\rm Dom}(\mathcal{T}))$ be its generator. Then, for any $f \in {\rm Dom}(\mathcal{T})$, one could study the abstract Cauchy problem
\begin{equation}\label{eq:ACP}
	\begin{cases}
		\displaystyle \dersup{u}{t}{\Phi}(t)=\mathcal{T}u(t) & t>0 \\
		u(0)=f.
	\end{cases}
\end{equation}
For these problems, the following subordination principle has been shown in \cite{toaldo2015convolution,chen2017time}.
\begin{thm}\label{thm:strongsolsemigroup}
The function
	\begin{equation*}
		f(t)=\E[T_{L_\Phi(t)}f_0]
	\end{equation*}
is the unique strong solution of \eqref{eq:ACP}, in the sense that
\begin{itemize}
	\item $u \in C(\R_0^+;\mathfrak{B})$;
	\item $\displaystyle\dersup{u(t)}{t}{\Phi}$ is well-defined in $\mathfrak{B}$ for any $t>0$;
	\item $\displaystyle\dersup{u}{t}{\Phi} \in C(\R^+; \mathfrak{B})$;
	\item $u$ satisfies the equalities in \eqref{eq:ACP}.
\end{itemize}
\end{thm}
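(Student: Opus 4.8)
The plan is to verify the statement through Laplace transforms in time, which turns the nonlocal equation \eqref{eq:ACP} into an algebraic identity for the resolvent of $\mathcal{T}$. Write the candidate solution as
\begin{equation*}
	u(t):=\E[T_{L_\Phi(t)}f_0]=\int_0^{+\infty}T_s f_0\, f_L(s;t)\,ds,
\end{equation*}
the second equality being the disintegration of the expectation against the density \eqref{eq:density2} of $L_\Phi(t)$. Since $\{T_t\}_{t\ge 0}$ is a contraction semigroup, $\Norm{T_s f_0}{\mathfrak{B}}\le \Norm{f_0}{\mathfrak{B}}$, so the Bochner integral converges absolutely for each $t$ and $\Norm{u(t)}{\mathfrak{B}}\le\Norm{f_0}{\mathfrak{B}}$.

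First I would settle the regularity $u\in C(\R_0^+;\mathfrak{B})$ and the initial condition. Under the standing assumptions on $\Phi$, the subordinator $S_\Phi$ is strictly increasing, so $L_\Phi$ has continuous nondecreasing sample paths with $L_\Phi(0)=0$; by strong continuity of the semigroup the map $t\mapsto T_{L_\Phi(t)}f_0$ is then a.s.\ continuous with value $T_0f_0=f_0$ at $t=0$. The uniform bound $\Norm{T_{L_\Phi(t)}f_0}{\mathfrak{B}}\le\Norm{f_0}{\mathfrak{B}}$ lets me pass the limit inside the expectation by dominated convergence for Bochner integrals, giving continuity of $u$ on $\R_0^+$ and $u(0^+)=f_0$. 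Since $f_0\in\Dom(\mathcal{T})$ and $T_s$ commutes with $\mathcal{T}$ on the domain, the very same argument applied to $\mathcal{T}f_0$ shows that $t\mapsto\E[T_{L_\Phi(t)}\mathcal{T}f_0]$ is continuous; by closedness of $\mathcal{T}$ this identifies $u(t)\in\Dom(\mathcal{T})$ with $\mathcal{T}u(t)=\E[T_{L_\Phi(t)}\mathcal{T}f_0]\in C(\R_0^+;\mathfrak{B})$.

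Next I would compute $\widetilde u(\lambda)=\int_0^{+\infty}e^{-\lambda t}u(t)\,dt$. Interchanging the $s$- and $t$-integrals (legitimate by Tonelli and the contraction bound) and inserting \eqref{eq:Laplace} gives, for $\lambda>0$,
\begin{equation*}
	\widetilde u(\lambda)=\frac{\Phi(\lambda)}{\lambda}\int_0^{+\infty}e^{-s\Phi(\lambda)}T_s f_0\,ds=\frac{\Phi(\lambda)}{\lambda}(\Phi(\lambda)I-\mathcal{T})^{-1}f_0,
\end{equation*}
where the last step is the resolvent formula for a contraction semigroup, valid since $\Phi(\lambda)>0$. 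On the other hand, the convolution $t\mapsto\int_0^t\overline{\nu}_\Phi(t-\tau)(u(\tau)-f_0)\,d\tau$ vanishes at $t=0$, so by \eqref{eq:Laplacenu} and the convolution theorem its derivative $\dersup{u}{t}{\Phi}$ has Laplace transform $\Phi(\lambda)\widetilde u(\lambda)-\frac{\Phi(\lambda)}{\lambda}f_0$. Substituting the expression for $\widetilde u(\lambda)$ and using the resolvent identity $\mu(\mu I-\mathcal{T})^{-1}-I=\mathcal{T}(\mu I-\mathcal{T})^{-1}$ with $\mu=\Phi(\lambda)$ yields
\begin{equation*}
	\widetilde{\dersup{u}{t}{\Phi}}(\lambda)=\frac{\Phi(\lambda)}{\lambda}\bigl(\Phi(\lambda)(\Phi(\lambda)I-\mathcal{T})^{-1}-I\bigr)f_0=\mathcal{T}\,\widetilde u(\lambda)=\widetilde{\mathcal{T}u}(\lambda).
\end{equation*}
Since both $\dersup{u}{t}{\Phi}$ (once shown to exist, see below) and $\mathcal{T}u$ are continuous $\mathfrak{B}$-valued functions, injectivity of the Laplace transform forces $\dersup{u}{t}{\Phi}(t)=\mathcal{T}u(t)$ for all $t>0$. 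Uniqueness is read off the same way: any strong solution $v$ satisfies $(\Phi(\lambda)I-\mathcal{T})\widetilde v(\lambda)=\frac{\Phi(\lambda)}{\lambda}f_0$, hence $\widetilde v=\widetilde u$ and $v=u$.

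The genuine obstacle is not the transform bookkeeping but its rigorous Banach-space justification, and in particular verifying that $\dersup{u}{t}{\Phi}(t)$ exists as a strong derivative in $\mathfrak{B}$ and is continuous, rather than merely that its formal Laplace transform has the right shape. Concretely, I would show directly that the convolution $\int_0^t\overline{\nu}_\Phi(t-\tau)(u(\tau)-f_0)\,d\tau$ is continuously strongly differentiable; here the integrability $\int_0^{+\infty}(1\wedge t)\nu_\Phi(dt)<+\infty$ controlling the singularity of $\overline{\nu}_\Phi$ at the origin, the continuity of $u$, and the membership $f_0\in\Dom(\mathcal{T})$ (which gives $u(t)-f_0\in\Dom(\mathcal{T})$ with controlled increments) are exactly what make the strong limit defining $\dersup{u}{t}{\Phi}$ converge. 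This analytic core is the content invoked from \cite{toaldo2015convolution,chen2017time}, and once it is in place the Laplace computation above closes the proof.
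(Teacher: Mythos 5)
Your overall strategy---represent $u(t)=\int_0^{\infty}T_sf_0\,f_L(s;t)\,ds$, pass to Laplace transforms via \eqref{eq:Laplace} and the resolvent of $\mathcal{T}$, and conclude by injectivity of the vector-valued Laplace transform---is the standard route, and essentially the one taken in \cite{toaldo2015convolution,chen2017time}; note that the paper itself offers no proof at all here, it quotes the theorem from those references, so the comparison is really against that literature. Your handling of continuity, of the initial condition, and of the identity $\mathcal{T}u(t)=\E[T_{L_\Phi(t)}\mathcal{T}f_0]$ (Hille's theorem for closed operators and Bochner integrals) is correct. But there is a circularity you only partly acknowledge: your transform computation for $\dersup{u}{t}{\Phi}$ \emph{presupposes} that this strong derivative exists, is continuous and is Laplace transformable, and you then defer exactly this point---which is the analytic heart of the theorem, namely the second and third bullets of the statement---to the references, with only a heuristic remark about why the limit should converge. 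The deferral is avoidable inside your own framework by reordering: transform the convolution $v(t)=(\overline{\nu}_\Phi\ast(u(\cdot)-f_0))(t)$ itself, with no derivative involved; combining \eqref{eq:Laplacenu} with your resolvent identity gives $\widetilde{v}(\lambda)=\lambda^{-1}\widetilde{\mathcal{T}u}(\lambda)$, and since $\mathcal{T}u$ is continuous and bounded (by $\Norm{\mathcal{T}f_0}{\mathfrak{B}}$), injectivity of the Laplace transform for continuous functions yields $v(t)=\int_0^t\mathcal{T}u(s)\,ds$ pointwise; the fundamental theorem of calculus then produces in one stroke the existence of the strong derivative in the sense of Definition \ref{defn:strongder}, its continuity, and the equation. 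As written, however, your existence proof is incomplete on precisely the point the theorem is about.

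The second gap is unacknowledged and more serious: uniqueness. You assert that any strong solution $w$ satisfies $(\Phi(\lambda)I-\mathcal{T})\widetilde{w}(\lambda)=\frac{\Phi(\lambda)}{\lambda}f_0$, but the solution class defined by the four bullets carries no growth restriction at infinity, so an arbitrary strong solution need not admit a convergent Laplace transform, nor need $\mathcal{T}w$ or $\dersup{w}{t}{\Phi}$; your argument therefore proves uniqueness only within the class of Laplace-transformable strong solutions, which is strictly weaker than the claim. This is exactly the issue the authors of the present paper confront in Theorem \ref{thm:WspecdecKolm}, where ``Laplace transformable'' is written into the very definition of the (very weak) solution so that this argument can run. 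To close the gap for strong solutions you need an a priori bound before transforming, e.g.\ showing from the dissipativity of $\mathcal{T}$ that $\Norm{w(t)}{\mathfrak{B}}\le\Norm{f_0}{\mathfrak{B}}$ for every strong solution (in Hilbert spaces this follows from the kernel inequality $\langle\dersup{w}{t}{\Phi}(t),w(t)\rangle\ge\tfrac12\dersup{}{t}{\Phi}\Norm{w}{\mathfrak{B}}^2(t)$, valid for the nonincreasing kernel $\overline{\nu}_\Phi$, together with a comparison principle; in general Banach spaces a duality-map argument is needed). Without such a bound, the step ``hence $\widetilde{w}=\widetilde{u}$ and $w=u$'' does not get off the ground.
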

\begin{rmk}
	The previous theorem actually generalizes \cite[Theorem 3.1]{baeumer2001stochastic}, since if $\Phi(\lambda)=\lambda^\alpha$, then $\dersup{f}{t}{\Phi}(t)$ is the well-known Caputo derivative. Furthermore, if $ \Phi(\lambda)=(\lambda+\theta)^\alpha-\theta^\alpha$, then $\dersup{f}{t}{\Phi}(t)$ is a tempered fractional derivative, as in \cite{meerschaert2019stochastic,sabzikar2015tempered}. For similar regularity results see also \cite[Theorem 2.1]{chen2017time} and \cite[Theorem 3]{ascione2021time} in the case of a complete Bernstein function.
\end{rmk}
Among the abstract Cauchy problems of the form \eqref{eq:ACP}, let us consider the \textit{growth equation}
\begin{equation}\label{eq:growth}
	\begin{cases}
		\displaystyle \dersup{f}{t}{\Phi}(t)=\lambda f(t) & t>0 \\
		f(0)=1,
	\end{cases}
\end{equation}
where $\lambda \in \C$ and $f:\R_0^+ \to \C$. Such an equation has been studied with different techniques in \cite{kochubei2019growth,kolokol2019mixed,meerschaert2019relaxation,ascione2021abstract} for $\lambda \in \R$. However, adopting the same proof as in \cite[Proposition 4.3]{ascione2021abstract}, one can show the following statement.
\begin{prop}\label{prop:exp}
	For any $\lambda \in \C$ the function $\fe_\Phi(\cdot;\lambda)$ is the unique solution of \eqref{eq:growth}.
\end{prop}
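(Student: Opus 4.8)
The plan is to reduce the statement to an algebraic identity between Laplace transforms in the time variable, exactly as in the real-parameter argument of \cite[Proposition 4.3]{ascione2021abstract}, and then to isolate the single point at which $\lambda\in\C$ requires an extra word. Throughout, let $\mathcal{L}$ denote the Laplace transform in $t$ and let $z$ be its dual variable, kept distinct from the parameter $\lambda$. First I would record the transform rule for the operator $\dersup{}{t}{\Phi}$ acting on a function $f$ with $f(0^+)=1$: by Definition \ref{defn:strongder}, $\dersup{f}{t}{\Phi}(t)$ is the $t$-derivative of the convolution $\overline{\nu}_\Phi*(f-1)$, which vanishes at $t=0$, so combining the convolution theorem with \eqref{eq:Laplacenu} yields
\begin{equation*}
	\mathcal{L}\!\left[\dersup{f}{t}{\Phi}\right]\!(z)=\Phi(z)\,\mathcal{L}[f](z)-\frac{\Phi(z)}{z},\qquad \Re(z)>0.
\end{equation*}
Hence, at the level of transforms, \eqref{eq:growth} is equivalent to the scalar relation $(\Phi(z)-\lambda)\,\mathcal{L}[f](z)=\Phi(z)/z$.

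Next I would transform the candidate solution. Starting from \eqref{eq:Phexp}, written as $\fe_\Phi(t;\lambda)=\int_0^{+\infty}e^{\lambda s}f_L(s;t)\,ds$, I would interchange the $s$- and $t$-integrals and insert \eqref{eq:Laplace} to get
\begin{equation*}
	\mathcal{L}[\fe_\Phi(\cdot;\lambda)](z)=\frac{\Phi(z)}{z}\int_0^{+\infty}e^{(\lambda-\Phi(z))s}\,ds=\frac{\Phi(z)}{z(\Phi(z)-\lambda)}.
\end{equation*}
This is exactly the unique solution of the algebraic relation above, so by injectivity of the Laplace transform $\fe_\Phi(\cdot;\lambda)$ satisfies \eqref{eq:growth}, the initial condition $\fe_\Phi(0;\lambda)=1$ being immediate from $L_\Phi(0)=0$. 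Uniqueness follows from the same computation applied to the difference $g=f_1-f_2$ of two admissible solutions: it satisfies $\dersup{g}{t}{\Phi}=\lambda g$ with $g(0)=0$, whence $(\Phi(z)-\lambda)\,\mathcal{L}[g](z)=0$ and therefore $g\equiv 0$.

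The only place where the complex parameter intervenes, and the step I expect to be the crux, is the convergence of the iterated integral defining $\mathcal{L}[\fe_\Phi(\cdot;\lambda)]$. Since $\Phi$ is a Bernstein function with $\nu_\Phi(0,+\infty)=+\infty$, one has $\Phi(z)\to+\infty$ as $z\to+\infty$ along the reals, so for every $\lambda\in\C$ there are arbitrarily large real $z$ with $\Re(\Phi(z))=\Phi(z)>\Re(\lambda)$. For such $z$ the integrand $e^{-zt}e^{\lambda s}f_L(s;t)$ is absolutely integrable, Fubini applies, and the elementary integral converges to $(\Phi(z)-\lambda)^{-1}$; this is what replaces the positivity bookkeeping available when $\lambda\in\R$. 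The prior existence of $\fe_\Phi(\cdot;\lambda)$ as a genuine $\C$-valued function for all $\lambda\in\C$ is already guaranteed by \eqref{eq:Phexp} and the integrability in \cite[Lemma 4.1]{ascione2021abstract}, and the coincidence of the two transforms on a right half-line $\{z>z_\lambda\}$ suffices both for inversion and for the uniqueness argument.
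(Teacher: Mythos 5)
Your proof is correct and takes essentially the same route as the paper: the paper gives no self-contained argument but defers to \cite[Proposition 4.3]{ascione2021abstract}, whose proof is precisely this Laplace-transform computation --- transform the convolution using \eqref{eq:Laplacenu} and \eqref{eq:Laplace}, identify $\mathcal{L}[\fe_\Phi(\cdot;\lambda)](z)=\frac{\Phi(z)}{z(\Phi(z)-\lambda)}$, and conclude by injectivity of the Laplace transform. Your isolation of the single point where $\lambda\in\C$ matters (Tonelli/Fubini requires real $z$ with $\Phi(z)>\Re(\lambda)$, which exists because $\nu_\Phi(0,+\infty)=+\infty$ forces $\Phi(z)\to+\infty$, together with the existence of $\fe_\Phi(\cdot;\lambda)$ from \cite[Lemma 4.1]{ascione2021abstract}) is exactly the adaptation of the real-parameter proof that the authors intend.
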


\section{Main results}\label{sec:MR}

\subsection{The time-nonlocal Kolmogorov equation: strong solutions}\label{sec:strongsol}
We are interested in the time-nonlocal Kolmogorov equation
\begin{equation}\label{eq:nonlocbKq}
	\begin{cases}
		\displaystyle \dersup{u}{t}{\Phi}(t)=\cG_\mu u(t) & t>0, \\
		u(0)=f,
	\end{cases}
\end{equation}
for suitable initial data $f$. It is clear, as we observed before, that $C^2(\bS^2) \subset {\rm Dom}(\cG_\mu)$, hence we are able to provide a strong solution of \eqref{eq:nonlocbKq}, for $f\in C^2(\bS^2)$, by means of Theorem \ref{thm:strongsolsemigroup}.
Actually, we can do more than this. Indeed, for less regular initial data, we can provide existence and uniqueness of the strong solutions of the Kolmogorov equation via a spectral decomposition result. The proof of such a theorem is similar to the one of \cite[Theorem 3.2]{d2014time} for the case $\mu=0$. However, by using the finer estimate provided by Lemma \ref{lem:seriesconv}, we are able to further weaken their regularity assumptions on the initial data.
\begin{thm}\label{thm:specdecKolm}
	Assume that $f \in H^s(\bS^2)$ with $s>1$ and
	\begin{equation*}
		f=\sum_{\ell=0}^{+\infty}\sum_{m=-\ell}^{\ell}f_{\ell,m}Y_{\ell,m}.
	\end{equation*}
Then the abstract Cauchy problem \eqref{eq:nonlocbKq} in $H^s(\bS^2)$ admits a unique strong solution $u$, in the sense that:
\begin{itemize}
	\item[(i)] $u \in C(\R_0^+;H^s(\bS^2))\cap C(\R^+;H^{s+2}(\bS^2))$;
	\item[(ii)] $\displaystyle \dersup{u}{t}{\Phi}(t)$ is well-defined in $H^{s}(\bS^2)$ for any $t>0$;
	\item[(iii)] $\displaystyle \dersup{u}{t}{\Phi} \in C(\R^+;H^{s}(\bS^2))$;
	\item[(iv)] $u$ satisfies the equalities in \eqref{eq:nonlocbKq}.
\end{itemize}
In particular, such a solution is given by
	\begin{equation}\label{eq:seriesunlbKe}
		u(t)=\sum_{\ell=0}^{+\infty}\sum_{m=-\ell}^{\ell}f_{\ell,m}\fe_\Phi(t;i\mu m-\ell(\ell+1))Y_{\ell,m},
	\end{equation}
where the equality holds in $H^s(\bS^2)$.
\end{thm}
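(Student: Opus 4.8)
The plan is to verify that the candidate series \eqref{eq:seriesunlbKe} converges in the required spaces and satisfies all four conditions, by reducing everything to the scalar growth equation \eqref{eq:growth} term-by-term. First I would observe that each summand is built from the eigenfunction relation \eqref{eq:eigenvalues}, namely $\cG_\mu Y_{\ell,m}=\lambda_{\ell,m}Y_{\ell,m}$ with $\lambda_{\ell,m}=i\mu m-\ell(\ell+1)$, together with Proposition \ref{prop:exp}, which tells us that $t\mapsto \fe_\Phi(t;\lambda_{\ell,m})$ solves the scalar growth equation with eigenvalue $\lambda_{\ell,m}$. Thus, formally, $\dersup{}{t}{\Phi}\big(\fe_\Phi(t;\lambda_{\ell,m})Y_{\ell,m}\big)=\lambda_{\ell,m}\fe_\Phi(t;\lambda_{\ell,m})Y_{\ell,m}=\cG_\mu\big(\fe_\Phi(t;\lambda_{\ell,m})Y_{\ell,m}\big)$, so each term solves \eqref{eq:nonlocbKq}; the whole content of the proof is to justify interchanging the nonlocal operator $\dersup{}{t}{\Phi}$ and $\cG_\mu$ with the infinite sum.

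The key estimates come from the growth-bound Lemma, \eqref{eq:ineq1}: for $t\ge t_0>0$ we have $|\fe_\Phi(t;\lambda_{\ell,m})|\le K/(-\Re(\lambda_{\ell,m}))=K/(\ell(\ell+1))$, while at $t=0$ we have $\fe_\Phi(0;\lambda)=1$ and $|\fe_\Phi(t;\lambda)|\le 1$ in general since $|e^{\lambda L_\Phi(t)}|\le 1$ for $\Re(\lambda)\le 0$. First I would establish convergence of \eqref{eq:seriesunlbKe} in $C(\R_0^+;H^s(\bS^2))$: using $\Norm{Y_{\ell,m}}{L^2(\bS^2)}=1$ and the $H^s$ seminorm weight $\ell^{2s}$, the tail is controlled by $\sum_\ell \ell^{2s}A_\ell(f)\sup_m|\fe_\Phi(t;\lambda_{\ell,m})|^2$, which is dominated by $[f]_{H^s(\bS^2)}^2<+\infty$ uniformly in $t\ge 0$. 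To get the $C(\R^+;H^{s+2}(\bS^2))$ regularity I would use the decay $|\fe_\Phi(t;\lambda_{\ell,m})|\le K/(\ell(\ell+1))\lesssim \ell^{-2}$ for $t\ge t_0$, which upgrades the weight from $\ell^{2s}$ to $\ell^{2s+4}$ at the cost of the squared bound, yielding $\sum_\ell \ell^{2(s+2)}A_\ell(f)|\fe_\Phi|^2\lesssim \sum_\ell \ell^{2s}A_\ell(f)$, finite. This simultaneously shows $\cG_\mu u(t)\in H^s(\bS^2)$ so the right-hand side of \eqref{eq:nonlocbKq} is well-defined, and gives (i).

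For conditions (ii), (iii) and (iv) I would differentiate the series termwise. Writing $u(t)=\sum_{\ell,m}f_{\ell,m}\fe_\Phi(t;\lambda_{\ell,m})Y_{\ell,m}$ and applying $\dersup{}{t}{\Phi}$ to each term via Proposition \ref{prop:exp} gives the candidate $\sum_{\ell,m}f_{\ell,m}\lambda_{\ell,m}\fe_\Phi(t;\lambda_{\ell,m})Y_{\ell,m}$; since $|\lambda_{\ell,m}|\lesssim \ell^2$ and $|\fe_\Phi(t;\lambda_{\ell,m})|\lesssim \ell^{-2}$ for $t\ge t_0$, the $H^s$-norm of this series is again bounded by $[f]_{H^s(\bS^2)}$, so it converges in $C(\R^+;H^s(\bS^2))$, giving (ii) and (iii). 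The identity $\cG_\mu u(t)=\dersup{u}{t}{\Phi}(t)$ then follows because both series coincide coefficientwise, using \eqref{eq:eigenvalues} on the $\cG_\mu$ side. I expect the main obstacle to be the rigorous justification of exchanging the \emph{nonlocal} derivative $\dersup{}{t}{\Phi}$ with the infinite sum: unlike an ordinary derivative, $\dersup{}{t}{\Phi}$ is defined through a Bochner convolution against $\overline{\nu}_\Phi$ followed by a strong limit (Definition \ref{defn:strongder}), so I would need to verify that the partial sums converge together with their $\Phi$-derivatives uniformly on compact subsets of $\R^+$, so that the strong limit passes through the sum; the uniform-in-$t$ bounds above, combined with the continuity of $\fe_\Phi(\cdot;\lambda)$ and the dominated convergence for the convolution, are precisely what makes this interchange legitimate. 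The initial condition $u(0)=f$ follows from $\fe_\Phi(0;\lambda)=1$ and the $L^2$-continuity at $t=0$.
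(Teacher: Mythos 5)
Your existence argument is sound and follows essentially the paper's own spectral-decomposition strategy: same candidate series \eqref{eq:seriesunlbKe}, same key estimates (the bound \eqref{eq:ineq1} giving $|\fe_\Phi(t;\lambda_{\ell,m})|\le K/\ell(\ell+1)$ for $t\ge t_0$, Lemma \ref{lem:seriesconv}, and the uniform bound $|\fe_\Phi|\le 1$), and the same regularity bookkeeping for (i). Where you diverge mildly is in justifying the strong $\Phi$-derivative: you propose a Banach-space term-by-term differentiation theorem (partial sums converge, their $\Phi$-derivatives converge locally uniformly on $\R^+$, hence the ordinary derivative of $\overline{\nu}_\Phi \ast (u-f)$ passes through the limit), whereas the paper computes the difference quotient of the Bochner convolution explicitly, applies Lagrange's theorem to $t\mapsto \int_0^t\overline{\nu}_\Phi(t-\tau)(\fe_\Phi(\tau;\lambda)-1)d\tau$, and concludes by dominated convergence. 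Both routes rest on the same estimates and your variant is legitimate, though you leave it as a sketch rather than carrying it out.

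There is, however, a genuine gap: the theorem asserts that \eqref{eq:nonlocbKq} admits a \emph{unique} strong solution, and your proposal never addresses uniqueness. This is not a formality, since the solution concept (i)--(iv) admits a priori many candidates. The paper's argument is short but essential: by linearity it suffices to treat $f=0$; given any strong solution $u$, the regularity in (i)--(iii) lets one expand $u(t)=\sum_{\ell,m}u_{\ell,m}(t)Y_{\ell,m}$ and check that both $\dersup{u}{t}{\Phi}$ and $\cG_\mu u$ act coefficientwise, so each coefficient solves the scalar problem $\dersup{u_{\ell,m}}{t}{\Phi}=(i\mu m-\ell(\ell+1))u_{\ell,m}$ with $u_{\ell,m}(0)=0$; the uniqueness statement in Proposition \ref{prop:exp} then forces $u_{\ell,m}\equiv 0$. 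Without some such argument your proof establishes only existence. A second, lesser weakness is your treatment of (iv): asserting that $\cG_\mu u(t)$ "coincides coefficientwise" with the termwise series presupposes exactly the interchange of the second-order differential operator $\cG_\mu$ with the infinite sum that needs proof. The paper spends considerable effort here, establishing local absolute and uniform convergence of the differentiated series in $\theta$ and $\varphi$ via the identities \eqref{eq:dertheta} and \eqref{eq:secondderthetawLB}; alternatively one could argue via the termwise action \eqref{eq:LB} of $\Delta_g$ on $H^2(\bS^2)$ together with closedness of $\cG_\mu$, but your proposal invokes neither, relying only on the eigenfunction relation \eqref{eq:eigenvalues}, which by itself applies only to finite linear combinations.
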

\begin{rmk}
	Let us stress that if $\mu=0$, $s>\frac{7}{2}$ and $\Phi(\lambda)=\lambda^\alpha$ for some $\alpha \in (0,1)$, we recover \cite[Theorem 3.2]{d2014time}.
\end{rmk}
\begin{proof}
	Let us first prove that the series in \eqref{eq:seriesunlbKe} is convergent in $L^2(d\sigma;\C)$. Indeed, by Parseval's identity, it is only sufficient to show that
	\begin{equation*}
		\sum_{\ell=0}^{+\infty}\sum_{m=-\ell}^{\ell}|f_{\ell,m}\fe_\Phi(t;i\mu m-\ell(\ell+1))|^2<+\infty.
	\end{equation*}
	This is clearly true since $|\fe_\Phi(t;i\mu-\ell(\ell+1))|\le 1$ and $f \in L^2(d\sigma;\C)$. Actually, since
	\begin{equation*}
		A_\ell(u(t))=\sum_{m=-\ell}^{\ell}|f_{\ell,m}\fe_\Phi(t;i\mu m-\ell(\ell+1))|^2\le A_\ell(f),
	\end{equation*}
	we clearly have
	\begin{equation*}
		\sum_{\ell=0}^{+\infty}\ell^{2s}A_\ell(u(t))\le \sum_{\ell=0}^{+\infty}\ell^{2s}A_\ell(f)<+\infty,
	\end{equation*}
	thus $u(t) \in H^s(\bS^2)$ for any $t \ge 0$. This also proves that for any $t \ge 0$ it holds
	\begin{equation}\label{eq:ineqnorms}
		\Norm{u(t)}{H^s(\bS^2)}\le \Norm{f}{H^s(\bS^2)}.
	\end{equation}
	Moreover, observe that it holds
	\begin{align}\label{eq:seriesabsunif}
		\begin{split}
		\sum_{\ell=0}^{+\infty}\sum_{m=-\ell}^{\ell}|f_{\ell,m}\fe_\Phi(t;i\mu m-\ell(\ell+1))Y_{\ell,m}(x)|
		&\le \sum_{\ell=0}^{+\infty}\sqrt{\frac{(2\ell+1)A_\ell(f)}{2\pi}}<+\infty,
	\end{split}
	\end{align}
	where we used the Cauchy-Schwarz inequality and equality \eqref{eq:additionthmspecial}, and the convergence is guaranteed by Lemma \ref{lem:seriesconv}, since $s-\frac{1}{2}>\frac{1}{2}$.
%
%
%
%
By Weiestrass M-test, this guarantees that \eqref{eq:seriesabsunif} is absolutely and uniformly convergent for $t \ge 0$ and $x \in \bS^2$.

Now let us prove that $u \in C(\R_0^+;H^s(\bS^2))$. To do this, let us consider, for any $t \ge 0$ and $h \ge -t$, the power spectrum of $u(t+h)-u(t)$, given by
\begin{equation*}
	A_\ell(u(t+h)-u(t))=\sum_{m=\ell}^{\ell}|f_{\ell,m}|^2|\fe_\Phi(t+h;i\mu m-\ell(\ell+1))-\fe_\Phi(t;i\mu m-\ell(\ell+1))|^2\,.
\end{equation*}
Since $A_\ell(u(t+h)-u(t))) \le 4 A_\ell(f)$, then 
\begin{equation*}
	\sum_{\ell=0}^{+\infty}(1+\ell^{2s})A_\ell(u(t+h)-u(t)) \le 4\sum_{\ell=0}^{+\infty}(1+\ell^{2s})A_\ell(f)<+\infty.
	\end{equation*}
Thus, by Parseval's identity and the dominated convergence theorem, we have
\begin{align*}
	\lim_{h \to 0}\Norm{u(t+h)-u(t)}{H^s(\bS^2)}^2&=\sum_{\ell=0}^{+\infty}(1+\ell^{2s})\lim_{h \to 0}A_\ell(u(t+h)-u(t))=0
\end{align*}
which proves that $u \in C(\R^+_0; H^s(\bS^2))$.

Next, let us prove that $u(t) \in H^{s+2}(\bS^2)$ for any $t>0$. Indeed, by \eqref{eq:ineq1}, we have
\begin{align*}
	\sum_{\ell=1}^{+\infty}&\ell^{2(s+2)}\sum_{m=-\ell}^{\ell}|f_{\ell,m}\fe_\Phi(t;i\mu m-\ell(\ell+1))|^2
	\le C\sum_{\ell=1}^{+\infty}\ell^{2s}A_\ell(f)<+\infty.
\end{align*}
Analogously, for fixed $t>0$ and $h>-\frac{t}{2}$ we have 
\begin{equation*}
	\sum_{\ell=1}^{+\infty}\ell^{2(s+2)}A_\ell(u(t+h)-u(t))\le C \sum_{\ell=1}^{+\infty}\ell^{2s}A_\ell(f)<+\infty,
\end{equation*}
which implies, by a direct application of the dominated convergence theorem, that $u \in C(\R^+,H^{s+2}(\bS^2))$.

Now we  show that $\pdsup{u}{t}{\Phi}$ is well defined in $C(\R_0^+;H^s(\bS^2))$. Indeed, thanks to the uniform convergence of \eqref{eq:seriesunlbKe} for $t \ge 0$ and \cite[Theorem 7.16]{rudin1976principles}, we have
\begin{align*}
	\int_0^{t}&(u(\tau,x)-f(x))\overline{\nu}_\Phi(t-\tau)d\tau\\
	&=\sum_{\ell=0}^{+\infty}\sum_{m=-\ell}^{\ell}f_{\ell,m}\left(\int_0^{t}\overline{\nu}_\Phi(t-\tau)(\fe_\Phi(\tau;i\mu m-\ell(\ell+1))-1)d\tau\right)Y_{\ell,m}(x).
\end{align*}
Now observe that, by Proposition \ref{prop:exp},
\begin{align}\label{eq:phiderpre}
	\sum_{\ell=0}^{+\infty}&\sum_{m=-\ell}^{\ell}f_{\ell,m}\der{}{t}\left(\int_0^{t}\overline{\nu}_\Phi(t-\tau)(\fe_\Phi(\tau;i\mu m-\ell(\ell+1))-1)d\tau\right)Y_{\ell,m}(x)\\
	&=\sum_{\ell=0}^{+\infty}\sum_{m=-\ell}^{\ell}f_{\ell,m}(i\mu m-\ell(\ell+1))\fe_\Phi(t;i\mu m-\ell(\ell+1)) Y_{\ell,m}(x).\notag
\end{align}
Fix $t_0>0$ and notice that for any $t\ge t_0$ by Cauchy-Schwarz inequality and Lemma \ref{lem:seriesconv}
\begin{equation}\label{eq:unif1}
		\sum_{\ell=1}^{+\infty}\sum_{m=-\ell}^{\ell}|f_{\ell,m}im\fe_\Phi(t;i\mu m-\ell(\ell+1))Y_{\ell,m}(\theta,\varphi)|
		\le C\sum_{\ell=1}^{+\infty}\ell^{-\frac{1}{2}}\sqrt{A_\ell(f)}<+\infty
\end{equation}
and 
\begin{equation}\label{eq:unif2}
	\sum_{\ell=0}^{+\infty}\sum_{m=-\ell}^{\ell}|f_{\ell,m}\fe_\Phi(t;i\mu m-\ell(\ell+1))\ell(\ell+1)Y_{\ell,m}(\theta,\varphi)|
	\le C\sum_{\ell=0}^{+\infty}\ell^{\frac{1}{2}}\sqrt{A_\ell(f)}<+\infty.
\end{equation}
Combining \eqref{eq:unif1} and \eqref{eq:unif2} we get the absolute and uniform convergence of \eqref{eq:phiderpre} for $t \ge t_0$ and $x \in \bS^2$. Thus, we get
\begin{align*}
	\dersup{u(t,\theta,\varphi)}{t}{\Phi}
	&=\der{}{t}\sum_{\ell=0}^{+\infty}\sum_{m=-\ell}^{\ell}f_{\ell,m}\left(\int_0^{t}\overline{\nu}_\Phi(t-\tau)(\fe_\Phi(\tau;i\mu m-\ell(\ell+1))-1)d\tau\right)Y_{\ell,m}(\theta,\varphi)\\
	&=\sum_{\ell=0}^{+\infty}\sum_{m=-\ell}^{\ell}f_{\ell,m}(i\mu m-\ell(\ell+1))\fe_\Phi(\tau;i\mu m-\ell(\ell+1))Y_{\ell,m}(\theta,\varphi),
\end{align*}
for any $t>0$, $\theta \in (0,\pi)$ and $\varphi \in (0,2\pi)$. Let us stress that up to now $\dersup{u(t,\theta,\varphi)}{t}{\Phi}$ has been proven to exist only for fixed $t>0$,  $\theta \in (0,\pi)$ and $\varphi \in (0,2\pi)$.

To ensure that $u$ is a strong solution of \eqref{eq:nonlocbKq}, we need to show that $\dersup{u}{t}{\Phi}$ is well-defined as the strong derivative in $H^s(\bS^2)$ of a Bochner integral, as stated in Definition \ref{defn:strongder}. To do this, we first need to prove that
\begin{equation*}
	\int_0^t \overline{\nu}_\Phi(t-\tau)(u(\tau)-f)d\tau \in H^s(\bS^2)
\end{equation*}
is well-defined as a Bochner integral. By Bochner's theorem (see \cite[Theorem 1.1.4]{arendt2011vector}), it suffices to observe that, by inequality \eqref{eq:ineqnorms},
\begin{align*}
	\int_0^t &\overline{\nu}_\Phi(t-\tau)\Norm{u(\tau)-f}{H^s(\bS^2)}d\tau\le 2 \Norm{f}{H^s(\bS^2)}\int_0^t \overline{\nu}_\Phi(\tau)d\tau<+\infty.
\end{align*}
Next, one has to prove that, for any $t>0$,
\begin{align}\label{eq:strongderlimit}
	\lim_{h \to 0}\Norm{\frac{1}{h}\left(\int_0^{t+h}\overline{\nu}_\Phi(t+h-\tau)(u(\tau)-f)d\tau-\int_0^{t}\overline{\nu}_\Phi(t-\tau)(u(\tau)-f)d\tau\right)-\dersup{u(t)}{t}{\Phi}}{H^s(\bS^2)}=0.
\end{align}
To do this, let us first observe that we have
\begin{align*}
	\frac{1}{h}&\left(\int_0^{t+h}\overline{\nu}_\Phi(t+h-\tau)(u(\tau)-f)d\tau-\int_0^{t}\overline{\nu}_\Phi(t-\tau)(u(\tau)-f)d\tau\right)-\dersup{u(t)}{t}{\Phi}\\
	&=\sum_{\ell=0}^{+\infty}\sum_{m=-\ell}^{\ell}f_{\ell,m}\left(\frac{\fE_\Phi(t+h;i\mu m-\ell(\ell+1))-\fE_\Phi(t;i\mu m-\ell(\ell+1))}{h}\right.\\
	&\quad \left.-(i\mu m-\ell(\ell+1))\fe_\Phi(t;i\mu m-\ell(\ell+1))\vphantom{\frac{\fE_\Phi(t+h;i\mu m-\ell(\ell+1))-\fE_\Phi(t;i\mu m-\ell(\ell+1))}{h}}\right)Y_{\ell,m},
\end{align*}
where for any $\lambda \in \C$ we set
\begin{equation*}
	\fE_\Phi(t;\lambda)=\int_0^t \overline{\nu}_\Phi(t-\tau)(\fe_\Phi(\tau;\lambda)-1)d\tau.
\end{equation*}
In particular, it is clear that $\fE_\Phi(t;\lambda) \in C^1(\R_0^+)$ with
\begin{equation*}
	\der{\fE_\Phi(t;\lambda)}{t}=\dersup{\fe_\Phi(t;\lambda)}{t}{\Phi}=\lambda \fe_\Phi(t;\lambda),
\end{equation*}
where we used Proposition \ref{prop:exp}, and thus, by Lagrange's theorem, there exists $\theta(h) \in [(t+h)\wedge t, (t+h)\vee t]$ such that
\begin{align*}
	\frac{1}{h}&\left(\int_0^{t+h}\overline{\nu}_\Phi(t+h-\tau)(u(\tau)-f)d\tau-\int_0^{t}\overline{\nu}_\Phi(t-\tau)(u(\tau)-f)d\tau\right)-\dersup{u(t)}{t}{\Phi}\\
	&=\sum_{\ell=0}^{+\infty}\sum_{m=-\ell}^{\ell}f_{\ell,m}(i\mu m-\ell(\ell+1))(\fe_\Phi(\theta(h);i\mu m-\ell(\ell+1))-\fe_\Phi(t;i\mu m-\ell(\ell+1)))Y_{\ell,m}.
\end{align*}
By definition, we have
\begin{align*}
	&\left\Vert\frac{1}{h}\left(\int_0^{t+h}\overline{\nu}_\Phi(t+h-\tau)(u(\tau)-f)d\tau-\int_0^{t}\overline{\nu}_\Phi(t-\tau)(u(\tau)-f)d\tau\right)-\dersup{u(t)}{t}{\Phi}\right\Vert_{H^s(\bS^2)}^2\\
	&=\sum_{\ell=0}^{+\infty}(1+\ell^{2s})\sum_{m=-\ell}^{\ell}|f_{\ell,m}|^2|i\mu m-\ell(\ell+1)|^2|\fe_\Phi(\theta(h);i\mu m-\ell(\ell+1))-\fe_\Phi(t;i\mu m-\ell(\ell+1))|^2,
\end{align*}
and we can use the dominated convergence theorem to take the limit as $h \to 0$, since by \eqref{eq:ineq1} and the fact that $f \in H^s(\bS^2)$
\begin{equation}\label{eq:ineq}
	|f_{\ell,m}|^2|i\mu m-\ell(\ell+1)|^2|\fe_\Phi(\theta(h);i\mu m-\ell(\ell+1))-\fe_\Phi(t;i\mu m-\ell(\ell+1))|^2 
	\le C|f_{\ell,m}|^2
\end{equation}
with
\begin{equation*}
	\sum_{\ell=0}^{+\infty}(1+\ell^{2s})\sum_{m=-\ell}^{\ell}|f_{\ell,m}|^2=\sum_{\ell=0}^{+\infty}(1+\ell^{2s})A_\ell(f)<+\infty.
\end{equation*}
Thus, by using the dominated convergence theorem, the fact that $\theta(h) \to t$ as $h \to 0$ and the continuity of $\fe_\Phi(t;i\mu m-\ell(\ell+1))$, we have
\begin{equation*}
	\lim_{h \to 0}\left\Vert\frac{1}{h}\left(\int_0^{t+h}\overline{\nu}_\Phi(t+h-\tau)(u(\tau)-f)d\tau-\int_0^{t}\overline{\nu}_\Phi(t-\tau)(u(\tau)-f)d\tau\right)-\dersup{u(t)}{t}{\Phi}\right\Vert_{H^s(\bS^2)}^2=0,
\end{equation*}
which proves that $\dersup{u(t)}{t}{\Phi} \in H^s(\bS^2)$ and it is well-defined according to Definition \ref{defn:strongder}. Substituting $h$ in place of $\theta(h)$ in \eqref{eq:ineq}, tells us that $\dersup{u}{t}{\Phi} \in C(\R^+;H^s(\bS^2))$.

Next, let us prove that $u$ satisfies the equalities in \eqref{eq:nonlocbKq}.
To do this, we work in spherical coordinates. First of all, let us observe that, for fixed $t_0>0$ and $0<\theta_1<\theta_2<\pi$, the quantity
	\begin{equation}\label{eq:firstder1}
		\sum_{\ell=0}^{+\infty}\sum_{m=-\ell}^{\ell}f_{\ell,m}\fe_\Phi(t;i\mu m-\ell(\ell+1))\ell \cot(\theta)Y_{\ell,m}(\theta,\varphi)
	\end{equation}
	is absolutely and uniformly convergent  for $t \ge t_0$, $\theta \in [\theta_1,\theta_2]$ and $\varphi \in [0,2\pi)$. Indeed, recalling that $\cot(\theta)$ is bounded as $\theta \in [\theta_1,\theta_2]$, using \eqref{eq:ineq1}, the Cauchy-Schwarz inequality and the addition formula \eqref{eq:additionthmspecial}, we get
	\begin{align*}
		\sum_{\ell=1}^{+\infty}&\sum_{m=-\ell}^{\ell}|f_{\ell,m}\fe_\Phi(t;i\mu m-\ell(\ell+1))\ell \cot(\theta)Y_{\ell,m}(\theta,\varphi)|
		\le C\sum_{\ell=1}^{+\infty}\ell^{-\frac{1}{2}}\sqrt{A_\ell(f)}<+\infty,
	\end{align*}
	where the convergence of the latter series is guaranteed by Lemma \ref{lem:seriesconv} as $s-\frac{1}{2}>-\frac{1}{2}$. Thus absolute and uniform convergence follows from Weierstrass's M-test.
%
%
	Analogously, for fixed $t_0>0$ the quantity
	\begin{equation}\label{eq:firstder2}
		\sum_{\ell=0}^{+\infty}\sum_{m=-\ell}^{\ell}f_{\ell,m}\fe_\Phi(t;i\mu m-\ell(\ell+1))\sqrt{(\ell-m)(\ell+m+1)}e^{-i\varphi}Y_{\ell,m+1}(\theta,\varphi)
	\end{equation}
	is absolutely and uniformly convergent for $\theta \in [0,\pi]$, $\varphi \in [0,2\pi)$ and $t \ge t_0$, as
	\begin{align*}
		\sum_{\ell=1}^{+\infty}&\sum_{m=-\ell}^{\ell}|f_{\ell,m}\fe_\Phi(t;i\mu m-\ell(\ell+1))\sqrt{(\ell-m)(\ell+m+1)}e^{-i\varphi}Y_{\ell,m+1}(\theta,\varphi)|\\
		&\le C\sum_{\ell=1}^{+\infty}\ell^{-\frac{1}{2}}\sqrt{A_\ell(f)}.
	\end{align*}
	The local absolute and uniform convergence of \eqref{eq:firstder1} and the absolute and uniform convergence of \eqref{eq:firstder2}, combined with 
	\begin{equation}\label{eq:dertheta}
		\pd{Y_{\ell,m}}{\theta}(\theta,\varphi)=\ell \cot(\theta)Y_{\ell,m}(\theta,\varphi)+\sqrt{(\ell-m)(\ell+m+1)}e^{-i\varphi}Y_{\ell,m+1}(\theta,\varphi)
	\end{equation}
	 tells us that, for any fixed $t_0>0$, the series
	\begin{equation}\label{eq:firstdertheta}
		\sum_{\ell=0}^{+\infty}\sum_{m=-\ell}^{\ell}f_{\ell,m}\fe_\Phi(t;i\mu m-\ell(\ell+1))\pd{Y_{\ell,m}}{\theta}(\theta,\varphi)
	\end{equation}
	is locally absolutely and uniformly convergent for $t \ge t_0$, $\theta \in (0,\pi)$ and $\varphi \in (0,2\pi)$.
	
Notice also that for any fixed $t_0>0$ the series
	\begin{equation}\label{eq:firstderphi}
		\sum_{\ell=0}^{+\infty}\sum_{m=-\ell}^{\ell}f_{\ell,m}\fe_\Phi(t;i\mu m-\ell(\ell+1))\pd{Y_{\ell,m}}{\varphi}(\theta,\varphi)=\sum_{\ell=0}^{+\infty}\sum_{m=-\ell}^{\ell}f_{\ell,m}im\fe_\Phi(t;i\mu m-\ell(\ell+1))Y_{\ell,m}(\theta,\varphi)
	\end{equation}
	is absolutely and uniformly convergent for $\theta \in [0,\pi]$, $\varphi \in [0,2\pi)$ and $t \ge t_0$, by \eqref{eq:unif1}. 
	Next, we want to prove that for any fixed $t_0>0$ the series
	\begin{equation}\label{eq:secondderphi}
		\sum_{\ell=0}^{+\infty}\sum_{m=-\ell}^{\ell}f_{\ell,m}\fe_\Phi(t;i\mu m-\ell(\ell+1))\pdsup{Y_{\ell,m}}{\varphi}{2}(\theta,\varphi)=-\sum_{\ell=0}^{+\infty}\sum_{m=-\ell}^{\ell}f_{\ell,m}m^2\fe_\Phi(t;i\mu m-\ell(\ell+1))Y_{\ell,m}(\theta,\varphi)
	\end{equation}
	is absolutely and uniformly convergent for $\theta \in [0,\pi]$, $\varphi \in [0,2\pi)$ and $t \ge t_0$. To do this, we observe that
	\begin{align*}
		\sum_{\ell=0}^{+\infty}&\sum_{m=-\ell}^{\ell}|f_{\ell,m}m^2\fe_\Phi(t;i\mu m-\ell(\ell+1))Y_{\ell,m}(\theta,\varphi)|
		\le K\sum_{\ell=0}^{+\infty}\ell^{\frac{1}{2}}\sqrt{A_\ell(f)}<+\infty.
	\end{align*}
	Analogously, for fixed $t_0>0$, the series
\begin{align}\label{eq:LBY}
	\begin{split}
	\sum_{\ell=0}^{+\infty}&\sum_{m=-\ell}^{\ell}f_{\ell,m}\fe_\Phi(t;i\mu m-\ell(\ell+1))\Delta_gY_{\ell,m}(\theta,\varphi)\\
	&=-\sum_{\ell=0}^{+\infty}\sum_{m=-\ell}^{\ell}f_{\ell,m}\ell(\ell+1)\fe_\Phi(t;i\mu m-\ell(\ell+1))Y_{\ell,m}(\theta,\varphi)
\end{split}
\end{align}
is absolutely and uniformly convergent for $\theta \in [0,\pi]$, $\varphi \in [0,2\pi)$ and $t \ge t_0$, by \eqref{eq:unif2}.
Moreover, by  
	\begin{equation}\label{eq:secondderthetawLB}
		\pd{}{\theta}\left(\sin(\theta)\pd{}{\theta}\right)Y_{\ell,m}=\sin(\theta)\left(\Delta_g Y_{\ell,m}+\frac{1}{\sin^2(\theta)}\pdsup{}{\varphi}{2}\right),
	\end{equation}
and the absolute and uniform convergence of \eqref{eq:secondderphi} and \eqref{eq:LBY}, we obtain that, for any fixed $t_0>0$, the series
\begin{equation}\label{eq:seconddertheta}
	\sum_{\ell=0}^{+\infty}\sum_{m=-\ell}^{\ell}f_{\ell,m}\fe_\Phi(t;i\mu m-\ell(\ell+1))\pd{}{\theta}\left(\sin(\theta)\pd{Y_{\ell,m}}{\theta}(\theta,\varphi)\right)
\end{equation}
is locally absolutely and uniformly convergent for $t \ge t_0$, $\theta \in (0,\pi)$ and $\varphi \in (0,2\pi)$.

Thanks to the local absolute and uniform convergence of \eqref{eq:seriesunlbKe}, \eqref{eq:firstdertheta}, \eqref{eq:firstderphi}, \eqref{eq:secondderphi} and \eqref{eq:seconddertheta}, we know that we can exchange the operator $\cG_\mu$ with the summation sign to achieve
\begin{align}\label{eq:Gentou}
	\begin{split}
	\cG_\mu u(t,\theta,\varphi)&=\sum_{\ell=0}^{+\infty}\sum_{m=-\ell}^{\ell}f_{\ell,m}\fe_\Phi(t,i\mu m-\ell(\ell+1))\cG_\mu Y_{\ell,m}(\theta,\varphi)\\
	&=\sum_{\ell=0}^{+\infty}\sum_{m=-\ell}^{\ell}f_{\ell,m}\fe_\Phi(t,i\mu m-\ell(\ell+1))(i\mu-\ell(\ell+1)) Y_{\ell,m}(\theta,\varphi)\\
	&=\dersup{u(t,\theta,\varphi)}{t}{\Phi},
\end{split}
\end{align}
for all $t>0$, $\theta \in (0,\pi)$ and $\varphi \in (0,2\pi)$.

Finally, let us prove the uniqueness of the solution. Since the equation is linear, we only need to prove uniqueness when $f=0$. To do this, let us assume $u$ is a strong solution of \eqref{eq:nonlocbKq}. Then, since $u \in C(\R^+;H^{s+2}(\bS^2))$ and $\displaystyle \dersup{u}{t}{\Phi} \in C(\R^+;H^s(\bS^2))$, it holds
\begin{equation}\label{eq:seriesuniq}
	u(t)=\sum_{\ell=0}^{+\infty}\sum_{m=-\ell}^{\ell}u_{\ell,m}(t)Y_{\ell,m}, \ \mbox{ with } \
	\sum_{\ell=0}^{+\infty}(1+\ell^{2s+2})\sum_{m=-\ell}^{\ell}|u_{\ell,m}(t)|^2<+\infty.
\end{equation}
It is not difficult to check that
\begin{equation}\label{eq:seriesdfuniq}
	\dersup{u}{t}{\Phi}(t)=\sum_{\ell=0}^{+\infty}\sum_{m=-\ell}^{\ell}\dersup{u_{\ell,m}}{t}{\Phi}(t)Y_{\ell,m} \  \mbox { and } \ \cG_\mu u(t)=\sum_{\ell=0}^{+\infty}\sum_{m=-\ell}^{\ell}(i\mu m-\ell(\ell+1))u_{\ell,m}(t)Y_{\ell,m}.
\end{equation}
Since $u$ satisfies \eqref{eq:nonlocbKq}, then it is clear that the coefficients $u_{\ell,m}$ must be solutions of
\begin{equation*}
	\begin{cases}
		\displaystyle \dersup{u_{\ell,m}(t)}{t}{\Phi}=(i\mu m-\ell(\ell+1))u_{\ell,m}(t) & t>0\\
		u_{\ell,m}(0)=0
	\end{cases}
\end{equation*}
which implies that $u_{\ell,m}\equiv 0$, concluding the proof.
\end{proof}
Let us underline that, differently from the classical case, we need some better regularity of the initial data and we observe only a \textit{partial} parabolic smoothing. Indeed, we are able to prove the theorem only for an initial data in $H^s(\bS^2)$ where $s>1$ and the solution $u(t) \in H^{s+2}(\bS^2)$ for any $t>0$, in place of the classical parabolic smoothing effect in which $u(t) \in C^\infty(\bS^2)$. This is not due to a technical limitation of the proof, since we are able to provide an example in which better regularity cannot be achieved.
\begin{prop}\label{prop:counter}
	Let $\Phi(\lambda)=\lambda^{\alpha}$ for some $\alpha \in (0,1)$. For all $s>1$ there exists a function $f \in H^{s}(\bS^2)$ such that for all $t>0$ and $\varepsilon>0$ the strong solution $u$ of \eqref{eq:nonlocbKq} satisfies $u(t) \not \in H^{s+2+\varepsilon}(\bS^2)$.
\end{prop}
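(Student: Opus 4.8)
The plan is to specialize the spectral representation \eqref{eq:seriesunlbKe} to the case $\Phi(\lambda)=\lambda^\alpha$, where $\fe_\Phi(t;\lambda)=E_\alpha(\lambda t^\alpha)$, and to build the counterexample out of purely zonal initial data. First I would take $f$ whose Fourier--Laplace coefficients vanish for $m\neq 0$, say $f=\sum_{\ell\ge 2} b_\ell Y_{\ell,0}$ with $b_\ell\ge 0$ to be fixed later. For such an $f$ every eigenvalue entering \eqref{eq:seriesunlbKe} is the real negative number $-\ell(\ell+1)$, so the power spectrum of the solution is exactly
\[
A_\ell(u(t))=b_\ell^2\,\bigl|E_\alpha(-\ell(\ell+1)t^\alpha)\bigr|^2 .
\]
This reduction to real negative arguments is what lets me replace the soft bound \eqref{eq:ineq1} by a precise two-sided estimate.

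The second step is to feed in the sharp behaviour of the Mittag--Leffler function recalled after \eqref{eq:ineq1}: for $\alpha\in(0,1)$ the function $E_\alpha(-x)$ is positive on $[0,+\infty)$ and $E_\alpha(-x)\sim (x\Gamma(1-\alpha))^{-1}$ as $x\to+\infty$. In particular there are constants $c_1>0$ and $x_0>0$ with $E_\alpha(-x)\ge c_1/x$ for $x\ge x_0$. Hence, for each fixed $t>0$ and all $\ell\ge \ell_0(t)$ large enough,
\[
A_\ell(u(t))\ge b_\ell^2\,\frac{c_1^2}{t^{2\alpha}\,\ell^2(\ell+1)^2}\ge b_\ell^2\,\frac{c_1^2}{4\,t^{2\alpha}\,\ell^4}.
\]
Thus the two spatial derivatives gained through the smoothing effect contribute exactly a factor $\ell^{-4}$ to the spectrum, and nothing faster.

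Then I would choose the coefficients at the borderline of $H^s(\bS^2)$: set $b_\ell=\ell^{-s-\frac12}(\log\ell)^{-1}$ for $\ell\ge 2$ and $b_\ell=0$ otherwise. On one hand $\sum_\ell \ell^{2s}b_\ell^2=\sum_{\ell\ge 2}\ell^{-1}(\log\ell)^{-2}<+\infty$, so $f\in H^s(\bS^2)$ (and, since $2s+1>1$, also $f\in L^2(\bS^2)$); by Theorem \ref{thm:specdecKolm} the strong solution of \eqref{eq:nonlocbKq} then exists and is given by \eqref{eq:seriesunlbKe}. On the other hand, for every $\varepsilon>0$ and every $t>0$, the lower bound above yields
\[
\sum_{\ell}\ell^{2(s+2+\varepsilon)}A_\ell(u(t))\ge \frac{c_1^2}{4\,t^{2\alpha}}\sum_{\ell\ge \ell_0(t)}\ell^{2\varepsilon-1}(\log\ell)^{-2}=+\infty,
\]
since $\sum_\ell \ell^{p}(\log\ell)^{-2}$ diverges for every $p>-1$, and here $p=2\varepsilon-1>-1$. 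Therefore $u(t)\notin H^{s+2+\varepsilon}(\bS^2)$ for all $t>0$ and all $\varepsilon>0$ simultaneously, which is the assertion.

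The only genuinely delicate ingredient is the lower bound on $E_\alpha(-x)$; everything else is the routine convergence bookkeeping of the power spectrum already used in the proof of Theorem \ref{thm:specdecKolm}. Note that a lower bound alone suffices, so I do not need the full asymptotic expansion: it is classical that, for $\alpha\in(0,1)$, $E_\alpha(-x)$ is completely monotone and positive with $E_\alpha(-x)\ge c/(1+x)$ for some $c>0$, which I would simply quote. The logarithmic weight in $b_\ell$ is the device that pins $f$ exactly inside $H^s(\bS^2)$ while forcing $u(t)$ out of $H^{s+2+\varepsilon}(\bS^2)$ for every $\varepsilon>0$, thereby showing that the regularity gain in Theorem \ref{thm:specdecKolm} is sharp.
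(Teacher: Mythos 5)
Your proposal is correct and follows essentially the same route as the paper: the identical zonal counterexample $f=\sum_{\ell\ge 2}\ell^{-s-\frac{1}{2}}(\log\ell)^{-1}Y_{\ell,0}$, the spectral formula of Theorem \ref{thm:specdecKolm}, and the same divergent-series computation showing $u(t)\notin H^{s+2+\varepsilon}(\bS^2)$. The only cosmetic difference is the source of the Mittag--Leffler lower bound: the paper quotes Simon's global two-sided estimate $E_\alpha(-x)\ge \bigl(1+\Gamma(1-\alpha)x\bigr)^{-1}$, valid for all $x\ge 0$, whereas you use the large-argument asymptotics (hence the threshold $\ell_0(t)$), which suffices equally well.
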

\begin{proof}
	Fix $s>1$ and consider the function $f=\sum_{\ell=2}^{+\infty}f_\ell Y_{\ell,0}$, where 
	\begin{equation*}
		f_\ell=\frac{1}{\ell^{s+\frac{1}{2}}\log(\ell)} \ \mbox{ with } \ \sum_{\ell=2}^{+\infty}\frac{(1+\ell^{2s})}{\ell^{2s+1}\log^2(\ell)}<\infty.		
	\end{equation*}
	By Theorem~\ref{thm:specdecKolm} we have
	\begin{equation*}
		u(t)=\sum_{\ell=2}^{+\infty}\frac{E_\alpha(-\ell(\ell+1)t^{\alpha})}{\ell^{s+\frac{1}{2}}\log(\ell)}Y_{\ell,0}.		
	\end{equation*}
	Now consider any $\varepsilon>0$ and observe that
	\begin{align*}
		\sum_{\ell=2}^{+\infty}\frac{\ell^{2(s+2+\varepsilon)}E^2_\alpha(-\ell(\ell+1)t^{\alpha})}{\ell^{2s+1}\log^2(\ell)} &\ge \sum_{\ell=2}^{+\infty}\frac{\ell^{2(s+2+\varepsilon)}}{\ell^{2s+1}\left(1+\Gamma(1-\alpha)\ell(\ell+1)t^{\alpha}\right)^2\log^2(\ell)}\\
		&\ge C(t)\sum_{\ell=2}^{+\infty}\frac{1}{\ell^{1-2\varepsilon}\log^2(\ell)}=\infty,
	\end{align*}
	where we used the lower bound in \cite[Theorem $4$]{simon2014comparing}.
\end{proof}
In the following section, our aim is to exploit a stochastic representation of the solutions of \eqref{eq:nonlocbKq}.


\subsection{The time-changed spherical Brownian motion with longitudinal drift}\label{sec:sphericalB}
Let $W^\mu(t)$ be a spherical Brownian motion with longitudinal drift, $\Phi$ be as in \eqref{eq:Bernstein} and let $S_\Phi(t)$ be a subordinator independent of it.
\begin{defn}
	We call $W^\mu_\Phi(t):=W^\mu(L_\Phi(t))$ the \textit{time-changed spherical Brownian motion with longitudinal drift}, with \textit{parent process} $W^\mu(t)$ and time-change $L_\Phi(t)$.
\end{defn}
\begin{rmk}
	If we consider the spherical Brownian motion $W(t)=(\theta(t),\varphi(t))$ in spherical coordinates, then we can represent, according to the identification \eqref{eq:sphericalcoord}, $W^\mu_\Phi$ as
	\begin{equation*}
		W^\mu_\Phi(t)=(\theta_\Phi(t),\varphi_\Phi(t)+\mu L_\Phi(t)),
	\end{equation*} 
	where $\theta_\Phi(t):=\theta(L_\Phi(t))$ and $\varphi_\Phi(t):=\varphi(L_\Phi(t))$. Actually, by using the theory developed in \cite{kobayashi2011stochastic}, we can define $\theta_\Phi(t)$ and $\varphi_\Phi(t)$ as the solution of the time-changed stochastic differential equations
	\begin{align*}
		d\theta_\Phi(t)&=dw^{(1)}(L_\Phi(t))+\frac{1}{2}\cot(\theta_\Phi(t))dL_\Phi(t)\\
		d\varphi_\Phi(t)&=\frac{1}{\sin(\theta_\Phi(t))}dw^{(2)}(L_\Phi(t)).
	\end{align*}
\end{rmk}
According to the discussion in Subsection \ref{subs:Sub}, $W_\Phi(t)$ is not a Markov process, but it is still a semi-Markov one. 
Consider the family of operators $T^\Phi_t: f \in L^1(\bS^2) \mapsto E[T_{L_\Phi(t)}f] \in L^1(\bS^2)$, where $\{T_t\}_{t\ge0}$ is the extension of the semigroup induced by $W^\mu$ on $L^1(\bS^2)$ provided by Lemma \ref{lem:extendLp}. Once we restrict $T^\Phi_t$ to $L^2(\bS^2)$, we are able to provide the following result.
\begin{thm}\label{thm:specdecgen}
	Let $f \in L^2(\bS^2)$ with Fourier-Laplace coefficients $(f_{\ell,m})_{\substack{\ell=0,1,\dots \\ m=-\ell,\dots,\ell}}$ and let
	\begin{equation}\label{eq:condexp2.1}
		u(t,x)=\E_x[f(W^\mu_\Phi(t))], \ \forall t \ge 0, \ \forall x \in \bS^2.
	\end{equation}
	Then it holds
	\begin{equation}\label{eq:condexp2}
		u(t)=\sum_{\ell=0}^{+\infty}\sum_{m=-\ell}^{\ell}f_{\ell,m}\fe_\Phi(t;im\mu -\ell(\ell+1))Y_{\ell,m}, \ \forall t \ge 0.
	\end{equation}
	Furthermore, $u \in C(\R_0^+;L^2(d\sigma;\C)) \cap C(\R^+;H^2(\bS^2))$.
\end{thm}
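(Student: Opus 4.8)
The plan is to realize the function $u$ of \eqref{eq:condexp2.1} as a Bochner integral of the $L^2$-extended semigroup $\{T_s\}_{s\ge 0}$ against the law of $L_\Phi(t)$, and then to read off its Fourier--Laplace coefficients. First I would exploit the independence of $L_\Phi$ and $W^\mu$: conditioning on the value of $L_\Phi(t)$, which for $t>0$ admits the density $f_L(\cdot;t)$ of \eqref{eq:density2}, gives, for $\sigma$-almost every $x \in \bS^2$,
\begin{equation*}
	u(t,x)=\E_x[f(W^\mu(L_\Phi(t)))]=\int_0^{+\infty}(T_sf)(x)\,f_L(s;t)\,ds,
\end{equation*}
where $T_sf(x)=\E_x[f(W^\mu(s))]$ is the semigroup of Lemma \ref{lem:extendLp}. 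Since $\{T_s\}_{s\ge0}$ is non-expansive on $L^2(\bS^2)$, the map $s\mapsto T_sf$ is bounded by $\Norm{f}{L^2(\bS^2)}$, so that $\int_0^{+\infty}\Norm{T_sf}{L^2(\bS^2)}f_L(s;t)\,ds\le\Norm{f}{L^2(\bS^2)}<+\infty$ and, by Bochner's theorem \cite[Theorem 1.1.4]{arendt2011vector}, the right-hand side is a well-defined Bochner integral in $L^2(\bS^2)$; a Fubini argument then identifies its value $\sigma$-almost everywhere with the pointwise integral, hence with $u(t,\cdot)$.

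Next I would compute the Fourier--Laplace coefficients of this integral. By Theorem \ref{eq:sKolmogorov} (equivalently, by the spectral density \eqref{eq:specdensity}) the semigroup acts diagonally on the harmonics, whence $\int_{\bS^2}(T_sf)\overline{Y}_{\ell,m}\,d\sigma=f_{\ell,m}e^{(i\mu m-\ell(\ell+1))s}$. Commuting this integral over $\bS^2$ with the Bochner integral over $s$, I obtain
\begin{equation*}
	\int_{\bS^2}u(t)\overline{Y}_{\ell,m}\,d\sigma=f_{\ell,m}\int_0^{+\infty}e^{(i\mu m-\ell(\ell+1))s}f_L(s;t)\,ds=f_{\ell,m}\,\E[e^{(i\mu m-\ell(\ell+1))L_\Phi(t)}]=f_{\ell,m}\,\fe_\Phi(t;i\mu m-\ell(\ell+1)),
\end{equation*}
by the definition \eqref{eq:Phexp} of $\fe_\Phi$. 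This is precisely \eqref{eq:condexp2} for every $t>0$; the case $t=0$ is immediate, since $L_\Phi(0)=0$ almost surely forces $u(0,\cdot)=f$, which matches the series because $\fe_\Phi(0;\lambda)=1$.

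For the two regularity statements I would argue by dominated convergence, exactly as in the proof of Theorem \ref{thm:specdecKolm}, relying on two bounds for $\fe_\Phi$. Since $\Re(i\mu m-\ell(\ell+1))=-\ell(\ell+1)\le0$ and $L_\Phi(t)\ge0$, one has $|\fe_\Phi(t;i\mu m-\ell(\ell+1))|\le1$, so $A_\ell(u(t))\le A_\ell(f)$; continuity of $t\mapsto\fe_\Phi(t;\lambda)$ (Proposition \ref{prop:exp}) together with the summable dominating sequence $4A_\ell(f)$ then yields $u\in C(\R_0^+;L^2(\bS^2))$, the value at $t=0$ being recovered through $\fe_\Phi(0;\lambda)=1$. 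For the $H^2$-regularity on $\R^+$, fix $t_0>0$; inequality \eqref{eq:ineq1} gives $\ell(\ell+1)|\fe_\Phi(t;i\mu m-\ell(\ell+1))|\le K$ for $t\ge t_0$, hence
\begin{equation*}
	\sum_{\ell=1}^{+\infty}\ell^4A_\ell(u(t))\le K^2\sum_{\ell=1}^{+\infty}\frac{\ell^4}{\ell^2(\ell+1)^2}A_\ell(f)\le K^2\Norm{f}{L^2(\bS^2)}^2<+\infty,
\end{equation*}
so $u(t)\in H^2(\bS^2)$ by the characterization \eqref{eq:H2cond2}, and the same domination (now by $4K^2A_\ell(f)$) gives continuity in $H^2(\bS^2)$ on $\R^+$.

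The hard part will be the rigorous passage between the pointwise conditional-expectation definition \eqref{eq:condexp2.1} and the $L^2$/Bochner framework — in particular justifying that the expectation over $L_\Phi(t)$ commutes with the Fourier--Laplace series, which converges only in $L^2$ and not necessarily pointwise. Routing everything through the Bochner integral $\int_0^{+\infty}T_sf\,f_L(s;t)\,ds$ and testing against each $Y_{\ell,m}$ circumvents this, because every coefficient computation collapses to a scalar Laplace-type transform of $f_L(\cdot;t)$; the only genuinely delicate point is $t=0$, where $f_L(\cdot;0)$ degenerates to a Dirac mass and the boundary value must instead be read directly from $W^\mu_\Phi(0)=W^\mu(0)$.
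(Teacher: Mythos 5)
Your proposal is correct, but it takes a genuinely different route from the paper. The paper argues by approximation: it picks $f_n\in C^\infty(\bS^2)$ with $f_n\to f$ in $L^2(\bS^2)$, applies \cite[Theorem 2.1]{chen2017time} and Theorem \ref{thm:specdecKolm} to each $f_n$ (together with a Doob--Dynkin/tower-property argument identifying $T^\Phi_t f_n$ with $\E_\cdot[f_n(W^\mu_\Phi(t))]$), and then passes to the limit using Parseval's identity and the strong continuity and non-expansiveness of the family $\{T^\Phi_t\}_{t\ge0}$ on $L^2(\bS^2)$; in particular the $C(\R_0^+;L^2(\bS^2))$ claim is read off from that strong continuity. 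You avoid approximation altogether: representing $u(t)$ as the Bochner integral $\int_0^{+\infty}T_sf\,f_L(s;t)\,ds$ and pairing with each $Y_{\ell,m}$ reduces everything to the scalar identity $\int_0^{+\infty}e^{(i\mu m-\ell(\ell+1))s}f_L(s;t)\,ds=\fe_\Phi(t;i\mu m-\ell(\ell+1))$, so you only need the diagonal action of the classical semigroup (Theorem \ref{eq:sKolmogorov}, or the kernel \eqref{eq:specdensity}) and the definition \eqref{eq:Phexp}, not the time-nonlocal Theorem \ref{thm:specdecKolm}. Your route is more self-contained and elementary; what the paper's route buys is the identification of $u(t)=T^\Phi_tf$ as a limit of strong solutions and the strong continuity and non-expansiveness of $\{T^\Phi_t\}$ on $L^2(\bS^2)$, facts reused later (e.g.~in Theorem \ref{thm:WspecdecKolm}). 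The regularity arguments essentially coincide (dominated convergence from $|\fe_\Phi|\le1$ and \eqref{eq:ineq1}); your domination $\ell^4A_\ell(u(t+h)-u(t))\le 4K^2A_\ell(f)$ is, if anything, stated more carefully than the corresponding step in the paper. Two details worth tightening: continuity of $t\mapsto\fe_\Phi(t;\lambda)$ follows most directly from $\fe_\Phi(t;\lambda)=\E[e^{\lambda L_\Phi(t)}]$, the path-continuity of $L_\Phi$ and dominated convergence (Proposition \ref{prop:exp} gives it only implicitly through the solution concept); and in the Fubini step identifying $\E_x[f(W^\mu_\Phi(t))]$ with $\int_0^{+\infty}T_sf(x)f_L(s;t)\,ds$ you should note that the $s$-dependent null sets in Lemma \ref{lem:extendLp} are harmless because $(s,x)\mapsto T_sf(x)$ is jointly measurable (by strong continuity in $s$), so the two integrands agree $ds\otimes d\sigma$-almost everywhere and hence, for $\sigma$-a.e.~$x$, for a.e.~$s$.
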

\begin{proof}
	Let us consider $f \in L^2(\bS^2)$. Since $C^\infty(\bS^2)$ is dense in $L^2(\bS^2)$, let us consider a sequence $\{f_n\} \subset C^\infty(\bS^2)$ such that $f_n \to f$ in $L^2(\bS^2)$. By \cite[Theorem 2.1]{chen2017time}, we know that the function
	\begin{equation*}
		u_n(t,x)=T^{\Phi}_tf_n(x)=\E[T_{L_\Phi(t)}f_n(x)]
	\end{equation*}
is the unique strong solution of \eqref{eq:nonlocbKq} with initial data $f_n$. 
By the Doob-Dynkin lemma and the independence of $L_\Phi(t)$ and $W^\mu(t)$, we know that
	\begin{equation*}
		T_{L_\Phi(t)}f_n(x)=\E_x[f_n(W^\mu(L_\Phi(t)))|L_\Phi(t)]=\E_x[f_n(W^\mu_\Phi(t))|L_\Phi(t)]
	\end{equation*}
	thus we can rewrite, thanks to the tower property of the conditional expectation,
	\begin{equation*}
		u_n(t,x)=\E[\E_x[f_n(W^\mu(L_\Phi(t)))|L_\Phi(t)]]=\E_x[f_n(W^\mu_\Phi(t))], \quad t \ge 0, \ x \in \bS^2.
	\end{equation*}

 Let  us denote by $(f^n_{\ell,m})_{\substack{\ell=0,1,\dots \\ m=-\ell,\dots,\ell}}$ the Fourier-Laplace coefficients of $f_n$. By Theorem \ref{thm:specdecKolm}, we can write
\begin{equation*}
	u_n(t)=\sum_{\ell=0}^{+\infty}\sum_{m=-\ell}^{\ell}f^n_{\ell,m}\fe_\Phi(t;i\mu m-\ell(\ell+1))Y_{\ell,m}.
\end{equation*}
Define the function $u(t)$ as in \eqref{eq:condexp2}. Clearly, for any $t\ge 0$ it holds $u(t) \in L^2(\bS^2)$. We want to show that for any $t \ge 0$ it holds $\lim_{n \to +\infty}\Norm{u_n(t)-u(t)}{L^2(\bS^2)}=0$. This is clear for $t=0$. Let $t > 0$ and observe that, by Parseval's identity, we have
\begin{align*}
	\Norm{u_n(t)-u(t)}{L^2(\bS^2)}&=\sum_{\ell=0}^{+\infty}\sum_{m=-\ell}^{\ell}|f_{\ell,m}-f^n_{\ell,m}|^2|\fe_\Phi(t;i\mu m-\ell(\ell+1))|^2\\
	&\le \Norm{f-f_n}{L^2(\bS^2)}.
\end{align*}
Taking the limit as $n \to +\infty$ we get the desired equality.

Next, recall that, by Lemma \ref{lem:extendLp}, $T_t:L^2(\bS^2) \to L^2(\bS^2)$ is a strongly continuous, positivity preserving, sub-Markov non-expansive semigroup. Hence it is not difficult to check that $T_t^\Phi:L^2(\bS^2) \to L^2(\bS^2)$ constitute a strongly continuous family of positivity preserving sub-Markov non-expansive linear operators. By definition, we have $T^\Phi_t f_n=u_n(t)$ and then, by continuity, we have
\begin{equation*}
	u(t)=\lim_{n \to +\infty}u_n(t)=\lim_{n \to +\infty}T^\Phi_t f_n=T^\Phi_t f,
\end{equation*}
which is equality \eqref{eq:condexp2.1}. Since $\{T^\Phi_t\}_{t \ge 0}$ is a strongly continuous family of operators, then $u \in C(\R_0^+;L^2(\bS^2))$. To prove that $u \in C(\R^+;H^2(\bS^2))$, we only need to verify that
\begin{equation*}
	\lim_{h \to 0}[u(t+h)-u(t)]^2_{H^2(\bS^2)}=0.
\end{equation*}
for any $t>0$. This is clear once we notice that, for $h>-\frac{t}{2}$,
\begin{equation*}
	\ell^2|\fe_\Phi(t+h;i\mu m-\ell(\ell+1))-\fe_\Phi(t;i\mu m-\ell(\ell+1))|^2\le C
\end{equation*}
for some $C>0$, implying that $\ell^2\cA_\ell(t,h)\le A_\ell(f)$, since in this case we can use dominated convergence theorem to achieve
\begin{equation*}
	\lim_{h \to 0}[u(t+h)-u(t)]^2_{H^2(\bS^2)}=\sum_{\ell=0}^{+\infty}\ell^2\lim_{h \to 0}\cA_\ell(t,h)=0.
\end{equation*}
\end{proof}
As a straightforward consequence, we have the following result.
\begin{cor}\label{cor:strong}
Let $f\in H^s(\bS^2)$, $s>1$. Then the function $u(t,x)=T^\Phi_t f(x)=\E_x[f(W^\mu_\Phi(t))]$ is the unique strong solution of \eqref{eq:nonlocbKq}.
\end{cor}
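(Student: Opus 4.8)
The plan is to recognize that this corollary is nothing more than the intersection of Theorem \ref{thm:specdecgen} and Theorem \ref{thm:specdecKolm}, obtained by identifying the common spectral series that both produce. First I would observe that $s>1$ in particular gives $s\ge 0$, so $H^s(\bS^2)\subset L^2(\bS^2)$ and Theorem \ref{thm:specdecgen} applies directly to $f$. That theorem certifies that the stochastic object $u(t,x)=\E_x[f(W^\mu_\Phi(t))]=T^\Phi_t f(x)$ is well defined and admits the spectral representation \eqref{eq:condexp2}, namely $u(t)=\sum_{\ell=0}^{+\infty}\sum_{m=-\ell}^{\ell}f_{\ell,m}\fe_\Phi(t;i\mu m-\ell(\ell+1))Y_{\ell,m}$.

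The crucial — and essentially only — step is then to note that this series coincides \emph{verbatim} with the series \eqref{eq:seriesunlbKe} of Theorem \ref{thm:specdecKolm}. Since the hypothesis here is precisely $f\in H^s(\bS^2)$ with $s>1$, that theorem guarantees that \eqref{eq:seriesunlbKe} is exactly the unique strong solution of \eqref{eq:nonlocbKq}, in the sense of its properties (i)--(iv). Because $u(t,x)=\E_x[f(W^\mu_\Phi(t))]$ equals this very series by \eqref{eq:condexp2.1}, it inherits the strong-solution property, and uniqueness is transferred along with it. This completes the argument.

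I expect no genuine obstacle, as both of the heavy theorems are already in hand; the corollary is labelled a \emph{straightforward consequence} for good reason. The single point deserving a moment of care is that the two source theorems phrase their conclusions in different topologies: Theorem \ref{thm:specdecgen} only asserts $u\in C(\R_0^+;L^2(\bS^2))\cap C(\R^+;H^2(\bS^2))$, whereas strong solvability in the sense of Theorem \ref{thm:specdecKolm} requires the sharper regularity $C(\R_0^+;H^s(\bS^2))\cap C(\R^+;H^{s+2}(\bS^2))$ together with the well-posedness of $\dersup{u}{t}{\Phi}$ in $H^s(\bS^2)$. Since the series is one and the same object regardless of which theorem produced it, and Theorem \ref{thm:specdecKolm} already upgrades its regularity under the assumption $s>1$, the identification in \eqref{eq:condexp2.1} suffices to endow $u(t,x)=\E_x[f(W^\mu_\Phi(t))]$ with the full strong-solution status without any further estimate.
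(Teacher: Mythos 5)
Your proposal is correct and is precisely the argument the paper intends: the corollary is stated as a straightforward consequence of Theorem \ref{thm:specdecgen}, whose conclusion \eqref{eq:condexp2} identifies $\E_x[f(W^\mu_\Phi(t))]$ with the series \eqref{eq:seriesunlbKe} that Theorem \ref{thm:specdecKolm} certifies (for $f\in H^s(\bS^2)$, $s>1$) to be the unique strong solution of \eqref{eq:nonlocbKq}. Your added remark that the weaker topology in Theorem \ref{thm:specdecgen} is harmless, since the series itself already carries the full regularity from Theorem \ref{thm:specdecKolm}, correctly dispatches the only subtlety.
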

\begin{rmk}
Theorem \ref{thm:specdecgen} extends, in some sense, the spectral decomposition result and the parabolic smoothing effect of Theorem \ref{thm:specdecKolm} to the case of functions $f \in L^2(\bS^2)$. Indeed, in Subsection \ref{subs:weak}, we will use the spectral decomposition result provided in Theorem \ref{thm:specdecgen} to prove existence and uniqueness of very weak solution to \eqref{eq:nonlocbKq}.
Arguing in the same way as in Proposition \ref{prop:counter}, setting $\Phi(\lambda)=\lambda^\alpha$, $\alpha\in(0,1)$, the function $\displaystyle f=\sum_{\ell=2}^\infty f_\ell Y_{\ell,0}$ with $f_\ell=\frac{1}{\ell^{\frac{1}{2}}\log(\ell)}$ belongs to $L^2(\bS^2)$ and, for all $t,\varepsilon>0$, $T^\Phi_t f \not \in H^{2+\varepsilon}(\bS^2)$.
\end{rmk}

For this reason, despite $W^\mu_\Phi$ is not a Markov process, we still refer to \eqref{eq:nonlocbKq} as the backward Kolmogorov equation of the process. However, we underline that such equation could not fully characterize $W^\mu_\Phi$, differently from what happens with Feller processes (see \cite{polito2018studies} for an example on the fractional Poisson process).
To provide the fundamental solution of the backward Kolmogorov equation \eqref{eq:nonlocbKq}, we show that the law of $W^\mu_\Phi(t)$ with $W^\mu_\Phi(0)=x$ admits density with respect to $\sigma$, which can be expanded in terms of spherical harmonics.
\begin{thm}\label{thm:Propdens} 
	For any $x\in\bS^2$ we denote by $(\theta_x,\varphi_x)$ its spherical coordinates. There exists a function $p_\mu^\Phi:\R_0^+ \times \bS^2 \times \bS^2 \to \R$ with the following properties;
	\begin{itemize}
		\item[(i)] For any $x \in \bS^2$, $t>0$ and any $A \in \cB(\bS^2)$ it holds
		\begin{equation*}
			\bP_x(W_\Phi(t) \in A)=\int_{A}p_\mu^{\Phi}(t,\cdot|x)d\sigma.
		\end{equation*}
	\item[(ii)] For any $x,y \in \bS^2$ such that $\theta_x\ne \theta_y, \pi-\theta_y$ and any $t>0$ it holds 
		\begin{equation}\label{eq:serpmuP}
		p_\mu^\Phi(t,y|x)=\sum_{\ell=0}^{+\infty}\sum_{m=-\ell}^{\ell}\fe_\Phi(t;i\mu m-\ell(\ell+1))Y_{\ell,m}(x)\overline{Y}_{\ell,m}(y).
	\end{equation}
	\item[(iii)] For any $t>0$ and $x,y \in \bS^2$ such that $\theta_x\ne \theta_y, \pi-\theta_y$, the following equality holds:
	\begin{equation*}
		p_\mu^\Phi(t,y|x)=\E[p_\mu(L_\Phi(t),y|x)]=p_{-\mu}^\Phi(t,x|y).
	\end{equation*}
	\item[(iv)] For fixed $x \in \bS^2$, it holds $p_\mu^\Phi(t,\cdot|x), p_\mu^\Phi(t,x|\cdot) \in L^2(\bS^2)$;
\item[(v)] For any function $f \in C(\bS^2)$, it holds
\begin{equation*}
	\lim_{t \to 0} \int_{\bS^2}p_\mu^\Phi(t,y|x)f(y)d\sigma(y)=f(x).
\end{equation*}
	\end{itemize}
\end{thm}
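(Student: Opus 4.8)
\emph{Proposal.} The plan is to define the candidate density directly by conditioning on the inverse subordinator and then to read off each of (i)--(v) from that representation. Since $L_\Phi(t)$ is independent of $W^\mu$, I would set
\[ p_\mu^\Phi(t,y|x):=\E[p_\mu(L_\Phi(t),y|x)], \]
and establish (i) together with the first equality of (iii) at once: exactly as in the proof of Theorem \ref{thm:specdecgen}, conditioning on $L_\Phi(t)$ and using Corollary \ref{cor:transition_density_standard} gives $\bP_x(W^\mu_\Phi(t)\in A)=\E[\bP_x(W^\mu(L_\Phi(t))\in A)]=\E[\int_A p_\mu(L_\Phi(t),y|x)\,d\sigma(y)]$, and since $p_\mu\ge 0$ Tonelli's theorem permits exchanging $\E$ with $\int_A$, yielding both the density assertion and $p_\mu^\Phi=\E[p_\mu(L_\Phi(\cdot),\cdot|\cdot)]$.

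For the spectral expansion (ii) I would begin from $\E[p_\mu(L_\Phi(t),y|x)]$ and insert \eqref{eq:specdensity}. The decisive algebraic step is to resum over $m$ at fixed $\ell$ via the drifted addition formula \eqref{eq:additiondrift}: writing $s=L_\Phi(t)$ one obtains
\[ p_\mu(s,y|x)=\sum_{\ell=0}^{+\infty}\frac{2\ell+1}{4\pi}e^{-\ell(\ell+1)s}P_\ell\!\left(x\cdot y_s^{-\mu}\right). \]
Under the hypothesis $\theta_x\ne\theta_y,\pi-\theta_y$ the cosine
\[ x\cdot y_s^{-\mu}=\cos\theta_x\cos\theta_y+\sin\theta_x\sin\theta_y\cos(\varphi_x-\varphi_y+\mu s) \]
stays, for every $s\ge 0$, inside the compact interval $[\cos(\theta_x+\theta_y),\cos(\theta_x-\theta_y)]\subset(-1,1)$, whose endpoints are strictly interior precisely because $\theta_x\ne\theta_y$ and $\theta_x+\theta_y\ne\pi$. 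Hence the geodesic angle $\gamma_s$ between $x$ and $y_s^{-\mu}$ is bounded away from $0$ and $\pi$ \emph{uniformly in $s$}, so the classical Bernstein bound $|P_\ell(\cos\gamma)|\le\sqrt{2/(\pi\ell\sin\gamma)}$ yields an $\ell^{-1/2}$ decay independent of $s$. Combining this with $\E[e^{-\ell(\ell+1)L_\Phi(t)}]=\fe_\Phi(t;-\ell(\ell+1))\le K/(\ell(\ell+1))$ from \eqref{eq:ineq1} (for $t\ge t_0$) bounds the $\ell$-th block by a constant times $\ell^{-3/2}$; Tonelli then legitimizes interchanging $\E$ with $\sum_\ell$, and undoing the addition formula inside the termwise expectation (pulling the finite $m$-sum out of $\E$) produces \eqref{eq:serpmuP}.

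The remaining items are then short. For the second equality of (iii) I would apply the adjoint relation $p_\mu(s,y|x)=p_{-\mu}(s,x|y)$ of \eqref{eq:adjointdensity} inside the expectation, observing that the hypothesis on $(\theta_x,\theta_y)$ is symmetric in $x,y$. For (iv), \eqref{eq:serpmuP} is the Fourier--Laplace series of $p_\mu^\Phi(t,\cdot|x)$ in $y$ with coefficients $\fe_\Phi(t;i\mu m-\ell(\ell+1))Y_{\ell,m}(x)$, and Parseval together with \eqref{eq:ineq1} and \eqref{eq:additionthmspecial} gives
\[ \Norm{p_\mu^\Phi(t,\cdot|x)}{L^2(\bS^2)}^2=\sum_{\ell=0}^{+\infty}\sum_{m=-\ell}^{\ell}|\fe_\Phi(t;i\mu m-\ell(\ell+1))|^2|Y_{\ell,m}(x)|^2\le\frac{1}{4\pi}+\sum_{\ell=1}^{+\infty}\frac{K^2}{\ell^2(\ell+1)^2}\frac{2\ell+1}{4\pi}<+\infty; \]
so the $L^2$-limit of the partial sums of \eqref{eq:serpmuP} coincides a.e.\ with the pointwise limit from (ii), placing $p_\mu^\Phi(t,\cdot|x)$ in $L^2(\bS^2)$, and the statement for $p_\mu^\Phi(t,x|\cdot)$ follows from (iii). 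Finally, for (v) I would use (i) to write $\int_{\bS^2}p_\mu^\Phi(t,y|x)f(y)\,d\sigma(y)=\E_x[f(W^\mu_\Phi(t))]$; since $W^\mu$ has continuous paths and $L_\Phi$ is continuous with $L_\Phi(0)=0$, we have $W^\mu_\Phi(t)\to x$ almost surely as $t\to 0^+$, and bounded convergence (using $f\in C(\bS^2)$) gives the limit $f(x)$.

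The crux is step (ii): the naive absolute estimate over $m$ only produces a divergent $\sum_\ell\ell^{-1}$, and the whole role of the condition $\theta_x\ne\theta_y,\pi-\theta_y$ is to keep the angular separation $\gamma_s$ uniformly away from the singular angles $0,\pi$ of the Legendre asymptotics \emph{for every realization} $s=L_\Phi(t)$, thereby upgrading the series to the convergent range $\sum_\ell\ell^{-3/2}$. I expect the delicate points to be the uniform-in-$s$ Legendre estimate and the rigorous justification of exchanging $\E$ with $\sum_\ell$ (in particular verifying the interval containment of $x\cdot y_s^{-\mu}$ and the strictness of its endpoints under the hypothesis).
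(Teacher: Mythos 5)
Your proposal is correct, and its analytic core coincides with the paper's: both rest on the drifted addition formula \eqref{eq:additiondrift}, a uniform $\ell^{-1/2}$ Legendre bound valid because the latitude condition $\theta_y\ne\theta_x,\pi-\theta_x$ keeps $x\cdot y_s^{-\mu}$ in a fixed compact subset of $(-1,1)$ for every $s$ (you verify this via the containment in $[\cos(\theta_x+\theta_y),\cos(\theta_x-\theta_y)]$; the paper phrases it as $y^{-\mu}_{L_\Phi(t)}\in\bS^2\setminus S_{x,\varepsilon}$ and invokes Szeg\H{o}'s Theorem 8.21.2 where you invoke Bernstein's inequality), the decay $\fe_\Phi(t;-\ell(\ell+1))\le K/(\ell(\ell+1))$ from \eqref{eq:ineq1}, and a Fubini-type interchange giving the summable bound $C\ell^{-3/2}$. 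The organization, however, is genuinely different. The paper defines $p^\Phi_\mu$ as the spectral series, obtains (i) by applying Theorem \ref{thm:specdecgen} to indicator functions, interchanging sum and integral only over the truncated sets $A_\varepsilon=A\setminus S_{x,\varepsilon}$, and then letting $\varepsilon\to0$ by monotone convergence; you instead define $p_\mu^\Phi(t,y|x):=\E[p_\mu(L_\Phi(t),y|x)]$ and get (i) in one stroke from conditioning plus Tonelli (legitimate since $p_\mu\ge0$, $(s,y)\mapsto p_\mu(s,y|x)$ is continuous for $s>0$, and $L_\Phi(t)>0$ a.s.), which also turns the first equality of (iii) into a definition rather than a consequence of the computation \eqref{eq:propVI}. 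Your route buys a cleaner proof of (i), with no exhaustion argument, at the price of two points you should make explicit. First, your candidate may equal $+\infty$ on the exceptional latitudes --- the paper's remark after the theorem shows this genuinely happens, e.g.\ at $y=x$ for $\Phi(\lambda)=\lambda^\alpha$, $\mu=0$ --- so to obtain a function with values in $\R$ as the statement requires you must redefine it (say, as $0$) on that $\sigma$-null set, which affects none of (i)--(v). Second, the interchange of $\E$ with $\sum_\ell$ in (ii) is a Fubini step rather than Tonelli, since the Legendre polynomials change sign; the absolute bound $C\ell^{-3/2}$ you derive is exactly what Fubini requires, so this is only a naming issue. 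The remaining items (iii)--(v) are handled as in the paper: the adjoint relation \eqref{eq:adjointdensity} inside the expectation, Parseval together with \eqref{eq:additionthmspecial} and \eqref{eq:ineq1} (your identification of the pointwise limit with the $L^2$ limit via an a.e.-convergent subsequence is the small argument the paper leaves implicit), and bounded convergence with path continuity of $W^\mu$ and $L_\Phi$.
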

\begin{proof}
	Fix $x \in \bS^2$ and a Borel set $A \subset \bS^2$. Applying Theorem \ref{thm:specdecgen} on the indicator function of $A$, we know that
	\begin{align*}
		\bP_x(W_\Phi(t)\in A)
				&=\sum_{\ell=0}^{+\infty}\int_{A}\sum_{m=-\ell}^{\ell} \overline{Y}_{\ell,m}(y)\fe_\Phi(t;i\mu m-\ell(\ell+1))Y_{\ell,m}(x)d\sigma(y).
	\end{align*}
	Now we want to exchange the integral sign with the summation sign. To do this, let us fix $x \in \bS^2$ and $\varepsilon>0$ sufficiently small. Let 
	\begin{equation*}
		S_{x,\varepsilon}=\{x \in \bS^2: \ |\theta_x-\theta_y| < \varepsilon \mbox{ or } |\theta_x+\theta_y-\pi| < \varepsilon\}
	\end{equation*}
	and define $A_\varepsilon=A \setminus S_{x,\varepsilon}$. Arguing as before we have
	\begin{equation}\label{eq:beforeFubini}
		\bP_x(W_\Phi(t)\in A_\varepsilon)=\sum_{\ell=0}^{+\infty}\int_{A_\varepsilon}\sum_{m=-\ell}^{\ell} \overline{Y}_{\ell,m}(t)\fe_\Phi(t;i\mu m-\ell(\ell+1))Y_{\ell,m}(x)d\sigma(y).
	\end{equation}
	We write explicitly the expectation in $\fe_\Phi$ to achieve
	\begin{align}\label{eq:propVI}
		\begin{split}
		&\sum_{m=-\ell}^{\ell} \overline{Y}_{\ell,m}(y)\fe_\Phi(t;i\mu m-\ell(\ell+1))Y_{\ell,m}(x)\\
		&=\E\left[\sum_{m=-\ell}^{\ell}e^{(i\mu m-\ell(\ell+1))L_{\Phi}(t)}Y_{\ell,m}(x)\overline{Y}_{\ell,m}(y)\right]\\
		&=\E\left[\frac{2\ell+1}{4\pi}e^{-\ell(\ell+1)L_\Phi(t)}P_\ell(x \cdot y_{L_{\Phi}(t)}^{-\mu})\right],
	\end{split}
	\end{align}
	where we used \eqref{eq:additiondrift}. Furthermore, it is clear that $M_\varepsilon=\max_{y \in \bS^2 \setminus S_{x,\varepsilon}}|x \cdot y|<1$, and then, since $y_{L_{\Phi}(t)}^{-\mu} \in \bS^2 \setminus S_{x,\varepsilon}$ for any $t \ge 0$, we can use \cite[Theorem 8.21.2]{szeg1939orthogonal}, to obtain that 
	\begin{equation*}
		|P_\ell(x \cdot y_{L_{\Phi}(t)}^{-\mu})| \le C \ell^{-\frac{1}{2}},
	\end{equation*}
	where the constant $C>0$ is independent of $x,y,t$ and $\ell$. This, in particular, implies that
	\begin{equation*}
	\left|\sum_{m=-\ell}^{\ell}\overline{Y}_{\ell,m}(y)\fe_\Phi(t,i\mu m-\ell(\ell+1))Y_{\ell,m}(x)\right|\le C\ell^{-\frac{3}{2}},
	\end{equation*}
	hence we can use Fubini's theorem in \eqref{eq:beforeFubini} to get 
	\begin{equation*}
		\bP_x(W_\Phi(t)\in A_\varepsilon)=\int_{A_\varepsilon}p_\mu^\Phi(t,y|x)d\sigma(y),
	\end{equation*}
	where 
	\begin{equation}\label{eq:seriesp}
		p_\mu^\Phi(t,y|x)=\sum_{\ell=0}^{+\infty}\sum_{m=-\ell}^{\ell} \overline{Y}_{\ell,m}(y)\fe_\Phi(t;i\mu m-\ell(\ell+1))Y_{\ell,m}(x).
	\end{equation}	
	The previous argument tells us that \eqref{eq:seriesp} is convergent for any $t>0$, $x \in \bS^2$ and $y \in \bS^2 \setminus S_{x,\varepsilon}$ for any $\varepsilon>0$, which, in particular, implies that \eqref{eq:seriesp} is convergent for any $t>0$ and $x,y \in \bS^2$ with $\theta_y\notin\{\theta_x, \pi-\theta_x\}$. Furthermore, for any fixed $x \in \bS^2$ and $t>0$, $p_\mu^\Phi(t,\cdot|x)$ is the density of the positive measure $B \in \cB(\bS^2 \setminus S_{x,\varepsilon}) \mapsto \bP_x(W_\Phi^\mu(t) \in B)$, hence, being $\varepsilon>0$ arbitrary, $p_\mu^\Phi(t,y|x) \ge 0$ for $t>0$ and $x,y \in \bS^2$ with $\theta_y\notin\{\theta_x, \pi-\theta_x\}$. As a consequence, by the monotone convergence theorem, it holds
	\begin{align*}
		\bP_x(W_\Phi(t) &\in A)=\bP_x(W_\Phi(t) \in A\setminus\{y\in\bS^2: \theta_y\notin\{\theta_x, \pi-\theta_x\}\})\\
		&=\lim_{\varepsilon \to 0}\bP_x(W_\Phi(t) \in A_\varepsilon)=\lim_{\varepsilon \to 0}\int_{A_\varepsilon}p_\mu^\Phi(t,y|x)d\sigma(y)=\int_{A}p_\mu^\Phi(t,y|x)d\sigma(y).
	\end{align*}
	This ends the proof of properties (i), (ii). Property (iii) is a straightforward consequence of \eqref{eq:propVI}. To show property (iv) it is sufficient to observe 
	\begin{align*}
		\Norm{p_\mu^\Phi(t,\cdot|x)}{L^2(\bS^2)}&=\sum_{\ell=0}^{+\infty}\sum_{m=-\ell}^{\ell}|\fe_\Phi(t;i\mu m-\ell(\ell+1))|^2|{Y}_{\ell,m}(x)|^2
		\le C\sum_{\ell=0}^{+\infty}\ell^{-3}<+\infty,
\end{align*}
where we used again the addition formula \eqref{eq:additionthmspecial}.

Finally, to prove item (v), just observe that since $f \in C(\bS^2)$, it is also bounded, and we can use the dominated convergence theorem to achieve
\begin{equation*}
	\lim_{t \to 0}\int_{\bS^2}p_\mu^\Phi(t,y|x)f(y)d\sigma(y)=\lim_{t \to 0}\E_x[f(W^\mu_\Phi(t))]=\E_x[f(W^\mu_\Phi(0))]=f(x).
\end{equation*}
\end{proof}
\begin{rmk}
	If $\mu=0$ and $\Phi(\lambda)=\lambda^\alpha$ for some $\alpha \in (0,1)$, we recover the probability density function obtained in \cite[Remark 3.5]{d2014time}.
\end{rmk}
\begin{rmk}
	Let us stress that we cannot guarantee for the convergence of the series defining $p_\mu^\Phi(t,x|y)$ if $y$ is such that $\theta_y\in\{\theta_x, \theta_x-\pi\}$. If, however, $\mu=0$, then one can prove, with exactly the same strategy, that the series \eqref{eq:serpmuP} converges for any $t>0$, $x \in \bS^2$ and $y \in \bS^2 \setminus \{\pm x\}$. If $y=-x$, the series can be rewritten as
	\begin{equation*}
		p^\Phi(t,-x|x):=p_0^\Phi(t,-x|x)=\sum_{\ell=0}^{+\infty}(-1)^\ell \frac{2\ell+1}{4\pi}\fe_\Phi(t;-\ell(\ell+1)),
	\end{equation*}
	thus convergence holds if, for any fixed $t>0$, $\ell \in \N \mapsto (2\ell+1)\fe_\Phi(t;-\ell(\ell+1)) \in \R$ is definitely decreasing.
	
	In the case $\Phi(\lambda)=\lambda^\alpha$ and $\mu=0$, we are actually able to prove that the series \eqref{eq:serpmuP} diverges whenever $x=y$. Indeed, since $E_\alpha(-\ell(\ell+1)t^\alpha) \sim C_\alpha(\ell(\ell+1))^{-1}$, it holds
	\begin{equation*}
		p_\mu^\Phi(t,x|x)=\sum_{\ell=0}^{+\infty}\frac{2\ell+1}{4\pi}E_\alpha(-\ell(\ell+1)t^{\alpha})\ge C\sum_{\ell=1}^{+\infty}\frac{2\ell+1}{\ell(\ell+1)} \ge C\sum_{\ell=1}^{+\infty}\frac{1}{\ell}=+\infty.
	\end{equation*}
\end{rmk}		
	By (i)-(ii) of Theorem \ref{thm:Propdens} we know that for any function $f \in L^2(\bS^2)$ and $t>0$,
	\begin{equation*}
		\E_x[f(W_\Phi(t))]=\int_{\bS^2}f(y)p_\mu^\Phi(t,y|x)d\sigma(y).
	\end{equation*}	
Combining this equality with (v) of Theorem \ref{thm:Propdens} we get that $p_\mu^\Phi$ is the fundamental solution of \eqref{eq:nonlocbKq}. 	
Similarly, we can introduce a forward Kolmogorov equation for $W_\Phi^\mu$. To do this, we first have to show that,
despite the process is not Markov, we can still use $p_\mu^\Phi$ as a transition density from the initial state, as underlined by the following proposition.
\begin{prop}\label{prop:density}
	For any $t>0$, any $\mathfrak{P} \in \mathcal{P}(\bS^2)$ and any $A \in \cB(\bS^2)$ it holds
	\begin{equation*}
		\bP_{\mathfrak{P}}(W^\mu_\Phi(t) \in A)=\int_{A} \int_{\bS^2} p_\mu^\Phi(t,y|x)d\mathfrak{P}(x)d\sigma(y)
		=:\int_{A}  p_{\mu,\mathfrak{P}}^\Phi(t,y) d\sigma(y)
	\end{equation*}
\end{prop}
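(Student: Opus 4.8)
The plan is to disintegrate $\bP_{\mathfrak{P}}$ with respect to the random initial position and then reduce to the fixed–starting–point density already provided by item (i) of Theorem \ref{thm:Propdens}. Under $\bP_{\mathfrak{P}}$ the initial position $W^\mu(0)$ has law $\mathfrak{P}$, and since the initial datum enters $W^\mu_\Phi(t)=W^\mu(L_\Phi(t))$ only through the parent Markov process $W^\mu$, while the time–change $L_\Phi$ is independent of it, the regular conditional law of $W^\mu_\Phi(t)$ given $\{W^\mu(0)=x\}$ is precisely the law under $\bP_x$ (such regular conditional distributions exist because $\bS^2$ is a compact metric, hence Polish, space). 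The tower property of the conditional expectation then yields
\begin{equation*}
	\bP_{\mathfrak{P}}(W^\mu_\Phi(t)\in A)=\int_{\bS^2}\bP_x(W^\mu_\Phi(t)\in A)\,d\mathfrak{P}(x),\qquad A\in\cB(\bS^2),
\end{equation*}
where measurability of $x\mapsto\bP_x(W^\mu_\Phi(t)\in A)$ is inherited from the Feller property of $W^\mu$ combined with the independence of $L_\Phi(t)$.

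Next I would substitute the representation of the inner probability from item (i) of Theorem \ref{thm:Propdens}, namely $\bP_x(W^\mu_\Phi(t)\in A)=\int_A p_\mu^\Phi(t,y|x)\,d\sigma(y)$, to get
\begin{equation*}
	\bP_{\mathfrak{P}}(W^\mu_\Phi(t)\in A)=\int_{\bS^2}\int_A p_\mu^\Phi(t,y|x)\,d\sigma(y)\,d\mathfrak{P}(x).
\end{equation*}
To conclude, I would exchange the two integrations by Tonelli's theorem. This is legitimate because $p_\mu^\Phi(t,\cdot\,|\,\cdot)\ge 0$ (established in the proof of Theorem \ref{thm:Propdens}) and, by the series \eqref{eq:serpmuP}, it is a pointwise limit of jointly continuous partial sums off the set $\{(x,y):\theta_y\in\{\theta_x,\pi-\theta_x\}\}$, which is $(\mathfrak{P}\otimes\sigma)$–negligible since for each fixed $x$ the corresponding latitude circles have vanishing $\sigma$–measure; hence $p_\mu^\Phi$ is jointly measurable. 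Swapping the order of integration gives the claimed identity with $p_{\mu,\mathfrak{P}}^\Phi(t,y)=\int_{\bS^2}p_\mu^\Phi(t,y|x)\,d\mathfrak{P}(x)$.

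The main obstacle is the disintegration step: one has to verify that conditioning on the \emph{initial} position is permissible even though $W^\mu_\Phi$ is only semi–Markov. The resolution is that the semi–Markov defect originates solely from the time–change $L_\Phi$, which is independent of $W^\mu$; conditioning on $\{W^\mu(0)=x\}$ therefore acts purely on the Markov parent $W^\mu$ and faithfully reproduces the starting–point laws $\bP_x$. Once this is granted, the remaining measurability and integrability checks underpinning Tonelli's theorem are routine.
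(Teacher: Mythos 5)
Your proposal is correct and follows essentially the same route as the paper's proof: condition on the initial position via the tower property, insert the fixed-starting-point representation from item (i) of Theorem \ref{thm:Propdens}, and swap the two integrals by Tonelli's theorem using the non-negativity of $p_\mu^\Phi$. The paper's argument is just a terser version of yours; your extra care about the existence of regular conditional distributions, the measurability of $x\mapsto\bP_x(W^\mu_\Phi(t)\in A)$, and the joint measurability of $p_\mu^\Phi$ off the negligible set of coincident latitudes fills in details the paper leaves implicit.
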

\begin{proof}
	Let $t>0$, $\mathfrak{P} \in \cP(\bS^2)$ and $A \in \cB(\bS^2)$. Then
	\begin{align*}
		&\bP_{\mathfrak{P}}(W^\mu_\Phi(t) \in A)=\E_{\mathfrak{P}}[1_A(W^\mu_\Phi(t))]=\E_{\mathfrak{P}}[\E[1_A(W^\mu_\Phi(t))|W^\mu_\Phi(0)]]\\
		&=\int_{\bS^2} \E_x[1_A(W^\mu_\Phi(t))]d\mathfrak{P}(x)=\int_{\bS^2}\int_{A} p_\mu^\Phi(t,y|x)d\sigma(y)d\mathfrak{P}(x)=\int_{A}  p_{\mu,\mathfrak{P}}^\Phi(t,y) d\sigma(y),
	\end{align*}
	where we used Fubini's theorem since the integrand is non-negative.
\end{proof}
In case $\mathfrak{P}\in\cP(\bS^2)$ admits a density $f\in L^2(\bS^2)$, we have, by item (iii) of Theorem \ref{thm:Propdens}, 
\begin{equation*}
	p_{\mu,\mathfrak{P}}^\Phi(t,y)=\E_y[f(W_\Phi^{-\mu})]
\end{equation*}
hence it belongs to $C(\R_0^+;L^2(\bS^2))\cap C(\R^+;H^2(\bS^2))$ and for any $t\ge0$
\begin{equation}\label{eq:densspec}
		p_{\mu,\mathfrak{P}}^\Phi(t)=\sum_{\ell=0}^{+\infty}\sum_{m=-\ell}^{\ell}f_{\ell,m}\fe_\Phi(t;-i\mu m-\ell(\ell+1))Y_{\ell,m},
\end{equation}
where $f_{\ell,m}$ are the Fourier-Laplace coefficients of $f$.
Furthermore, by Corollary \ref{cor:strong}, if $f\in H^s(\bS^2)$, $s>1$, then $p_{\mu,\mathfrak{P}}^\Phi$ is the unique strong solution of 
\begin{equation}\label{eq:nonlocfor}
		\begin{cases}
			\displaystyle \pdsup{u(t)}{t}{\Phi}=\cG_{-\mu} u(t) & t>0 \\
			\displaystyle u(0)=f.
		\end{cases}
\end{equation}
We refer to this as the forward Kolmogorov equation for $W^\mu_\Phi$, since we observe that $\cG_{-\mu}$ is the adjoint of $\cG_{\mu}$.

Concerning the stationary and limit distributions, the following proposition holds true.
\begin{prop}\label{prop:stat}
Let $\mathfrak{P}_U=\frac{\sigma}{4\pi}$. For any $A \in \cB(\bS^2)$, $t>0$ and any $\mathfrak{P} \in \cP(\bS^2)$ it holds
\begin{equation}\label{eq:propstat}
	\lim_{s \to +\infty}\bP_{\mathfrak{P}}(W^\mu_\Phi(s) \in A)=\frac{\sigma(A)}{4\pi}=\bP_{\mathfrak{P}_U}(W^\mu_\Phi(t) \in A).
\end{equation}
\end{prop}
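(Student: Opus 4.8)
The plan is to reduce both equalities to the fact that the constant Fourier--Laplace mode $(\ell,m)=(0,0)$, on which $\fe_\Phi(\cdot;0)\equiv1$, is the only mode invariant under the time change and the only one that survives as $s\to+\infty$, since every other mode carries an eigenvalue with strictly negative real part $-\ell(\ell+1)$.

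I would first dispose of the stationarity (the second equality). The uniform measure $\mathfrak{P}_U$ has density $f\equiv\frac1{4\pi}\in L^2(\bS^2)$ with respect to $\sigma$, whose only nonzero Fourier--Laplace coefficient is $f_{0,0}=\frac1{\sqrt{4\pi}}$. Hence the spectral series \eqref{eq:densspec} collapses to the single term
\[
p_{\mu,\mathfrak{P}_U}^\Phi(t)=f_{0,0}\,\fe_\Phi(t;0)\,Y_{0,0}=\frac1{\sqrt{4\pi}}\cdot1\cdot\frac1{\sqrt{4\pi}}=\frac1{4\pi},
\]
using $\fe_\Phi(t;0)=\E[e^{0}]=1$ and $Y_{0,0}\equiv\frac1{\sqrt{4\pi}}$. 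By Proposition \ref{prop:density} this gives $\bP_{\mathfrak{P}_U}(W^\mu_\Phi(t)\in A)=\int_A\frac1{4\pi}\,d\sigma=\frac{\sigma(A)}{4\pi}$ for every $t>0$.

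For the limiting statement I would write $\bP_{\mathfrak{P}}(W_\Phi^\mu(s)\in A)=\int_{\bS^2}u(s,x)\,d\mathfrak{P}(x)$ with $u(s,x):=\bP_x(W^\mu_\Phi(s)\in A)$, the identity being the conditioning on the starting point used in the proof of Proposition \ref{prop:density}. For fixed $s>0$ and every $x$ I would represent $u(s,x)=\int_A p_\mu^\Phi(s,y|x)\,d\sigma(y)$ and, using the bound $|\sum_m\overline Y_{\ell,m}(y)\fe_\Phi(s;i\mu m-\ell(\ell+1))Y_{\ell,m}(x)|\le C\ell^{-3/2}$ obtained in the proof of Theorem \ref{thm:Propdens}, integrate \eqref{eq:serpmuP} term by term in $y$ (the excluded poles form a $\sigma$-null set, irrelevant to the integral over $A$) to get the \emph{pointwise} representation
\[
u(s,x)=\sum_{\ell=0}^{+\infty}\sum_{m=-\ell}^{\ell}(1_A)_{\ell,m}\,\fe_\Phi(s;i\mu m-\ell(\ell+1))\,Y_{\ell,m}(x),\qquad (1_A)_{0,0}=\tfrac{\sigma(A)}{\sqrt{4\pi}}.
\]
Two analytic inputs then finish the interior limit. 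First, for each fixed $(\ell,m)$ with $\ell\ge1$ one has $\fe_\Phi(s;i\mu m-\ell(\ell+1))\to0$ as $s\to+\infty$: indeed $L_\Phi(s)\to+\infty$ almost surely, so $|e^{(i\mu m-\ell(\ell+1))L_\Phi(s)}|=e^{-\ell(\ell+1)L_\Phi(s)}\to0$ a.s., and dominated convergence (by $1$) passes this to the expectation. Second, the series is dominated uniformly in $x$ and in $s\ge t_0$: by \eqref{eq:ineq1} one has $|\fe_\Phi(s;i\mu m-\ell(\ell+1))|\le K/(\ell(\ell+1))$, and combining this with \eqref{eq:uniformbound} and the Cauchy--Schwarz inequality the tail is controlled by $C\sum_\ell\frac{2\ell+1}{\ell(\ell+1)}\sqrt{A_\ell(1_A)}\le C'\sum_\ell\ell^{-1}\sqrt{A_\ell(1_A)}<\infty$ (one further Cauchy--Schwarz against $\sum_\ell\ell^{-2}$). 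A dominated-convergence argument over the summation index then leaves only the $(0,0)$ term, so $u(s,x)\to(1_A)_{0,0}Y_{0,0}=\frac{\sigma(A)}{4\pi}$ for every $x$. Since $0\le u(s,x)\le1$ and $\mathfrak{P}$ is a probability measure, a final dominated convergence yields $\int_{\bS^2}u(s,x)\,d\mathfrak{P}(x)\to\frac{\sigma(A)}{4\pi}$.

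The step I expect to be the main obstacle is precisely the \emph{pointwise}-in-$x$ validity of the last display: Theorem \ref{thm:specdecgen} only guarantees the spectral identity in $L^2(\bS^2)$, whereas $\mathfrak{P}$ may be singular (for instance a Dirac mass), so an almost-everywhere identity cannot legitimately be integrated against $d\mathfrak{P}$. Routing the representation through the genuine density $p_\mu^\Phi(s,\cdot|x)$ and the $C\ell^{-3/2}$ tail estimate from Theorem \ref{thm:Propdens} is what upgrades it to an everywhere identity and makes the two subsequent dominated-convergence passages rigorous.
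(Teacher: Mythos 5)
Your proposal is correct and follows essentially the same route as the paper: condition on the starting point, expand $\E_x[1_A(W^\mu_\Phi(s))]$ in the spectral series with the Fourier--Laplace coefficients of $1_A$, dominate the series uniformly in $x$ and $s\ge t_0$ via \eqref{eq:ineq1}, Cauchy--Schwarz and the addition formula, pass to the limit by dominated convergence so that only the $(0,0)$ mode survives, and verify stationarity by collapsing \eqref{eq:densspec} for the uniform density. The two points you treat more carefully than the paper --- securing the \emph{everywhere}-in-$x$ (not merely $L^2$/a.e.) validity of the spectral identity by routing through the density of Theorem \ref{thm:Propdens} before integrating against a possibly singular $\mathfrak{P}$, and justifying $\fe_\Phi(s;i\mu m-\ell(\ell+1))\to 0$ via $L_\Phi(s)\to\infty$ a.s. --- are exactly the details the paper leaves implicit, so they are refinements rather than deviations.
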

\begin{proof}
		Arguing as in Proposition \ref{prop:density}, for any $A \in \cB(\bS^2)$  we have 
		\begin{equation*}
		\bP_{\mathfrak{P}}(W^\mu_\Phi(s) \in A)=\int_{\bS^2}\sum_{\ell=0}^{+\infty}\sum_{m=-\ell}^{\ell}\fe_\Phi(t;i\mu m-\ell(\ell+1))f_{\ell,m}Y_{\ell,m}(x)d\mathfrak{P}(x),
		\end{equation*}
		where $(f_{\ell,m})_{(\ell,m) \in \widetilde{\mathbb{Z}}}$ are the Fourier-Laplace coefficients of $1_A$. It is not difficult to check that, by \eqref{eq:ineq1}, the Cauchy-Schwarz inequality and the addition formula \eqref{eq:additionthmspecial}, for any $\ell \ge 1$, $t \ge 1$ and $y \in \bS^2$
		\begin{equation*}
			\sum_{m=-\ell}^{\ell}|f_{\ell,m}\fe_\Phi(t;-i\mu m-\ell(\ell+1))Y_{\ell,m}(x)|\le C\ell^{-\frac{3}{2}}\sqrt{A_\ell(f)}.
		\end{equation*}
		Hence, by Lemma \ref{lem:seriesconv} and the fact that $\sigma(\bS^2)=4\pi$, we can use the dominated convergence theorem to achieve 
		\begin{align*}
			\lim_{t \to +\infty}\bP_{\mathfrak{P}}(W^\mu_\Phi(s) \in A)&=\int_{\bS^2}\sum_{\ell=0}^{+\infty}\sum_{m=-\ell}^{\ell}\lim_{t \to +\infty}\fe_\Phi(t;i\mu m-\ell(\ell+1))f_{\ell,m}Y_{\ell,m}(x)d\mathfrak{P}(x)\\
			&=\frac{1}{\sqrt{4\pi}}\int_{\bS^2}f_{0,0}d\mathfrak{P}(x)=\frac{\sigma(A)}{4\pi},
		\end{align*}
		where we used the fact that since $Y_{0,0}=\frac{1}{\sqrt{4\pi}}$ then $f_{0,0}=\frac{\sigma(A)}{\sqrt{4\pi}}$ and $\mathfrak{P}$ is a probability measure. To handle the second part of equality \eqref{eq:propstat}, let us observe that the probability measure $\mathfrak{P}_U=\frac{\sigma}{4\pi}$ admits a density $f=\frac{1}{4\pi} \in L^2(\bS^2)$. Once we observe that $f_{\ell,m}=0$ for any $\ell \ge 1$ and $m=-\ell,\dots, \ell$, while $f_{0,0}=\frac{1}{\sqrt{4\pi}}$, we can use \eqref{eq:densspec} and Proposition \ref{prop:density} to get
		\begin{equation*}
			\bP_{\mathfrak{P}_U}(W^\mu_\Phi(t) \in A)=\int_{A}f_{0,0}Y_{0,0}(y)d\sigma(y)=\frac{\sigma(A)}{4\pi},
		\end{equation*}
		that concludes the proof.
\end{proof}

The latter proposition proves the ergodicity in total variation of the process $W^\mu_\Phi$ for an arbitrary ${\rm Law}(W^\mu_{\Phi}(0))$. However, we do not have any information about the speed of convergence. We give now a partial result, with respect to a suitable distance of probability measures, on the convergence rate.
\begin{prop}\label{prop:speed}
		Assume that $\Phi(\lambda)=\lambda^\alpha \cL(\lambda)$, where $\alpha \in [0,1)$ and $\cL(\lambda)$ satisfies $\lim_{\lambda \to 0^+}\frac{\cL(C\lambda)}{\cL(\lambda)}=1$ for all $C \ge 1$. Then
		\begin{itemize}
			\item[(i)] For all $s>1$, $t \ge 1$, and $x \in \bS^2$,
			\begin{equation}\label{eq:uniformuplow}
				\sup_{\substack{f \in H^s(\R^d) \\ \Norm{f}{H^s} \le 1}}\left|\E_x[f(W_\Phi^\mu(t))]-\frac{1}{4\pi}\int_{\bS^2}f(x)d\sigma(x)\right| \le \frac{\overline{C}(\sqrt{\zeta(2s-1)}+\sqrt{\zeta(2s)})}{\sqrt{2\pi}}t^{-\alpha}\cL(1/t), 
			\end{equation}
			where $\zeta$ is the Riemann zeta function and $\overline{C}:=\sup_{t \ge 1}\frac{\fe_\Phi(t;-2)}{\cL(1/t)}t^{\alpha}<\infty$.
			\item[(ii)] For all $s\ge0$ and $\varepsilon \in \left(0,\frac{\pi}{2}\right)$ if $x=(\theta_x,\varphi_x)$ with $\theta_x \not \in \left(\frac{\pi}{2}-\varepsilon,\frac{\pi}{2}+\varepsilon\right)$, then
			\begin{equation}\label{eq:uniformuplow2}
				\sqrt{\frac{3}{4\pi}}\underline{C}\cL(1/t)t^{-\alpha}\cos\left(\frac{\pi}{2}-\varepsilon\right)\le \sup_{\substack{f \in H^s(\R^d) \\ \Norm{f}{H^s} \le 1}}\left|\E_x[f(W_\Phi^\mu(t))]-\frac{1}{4\pi}\int_{\bS^2}f(x)d\sigma(x)\right|, \ \forall t \ge 1,
			\end{equation}
			where $\underline{C}:=\inf_{t \ge 1}\frac{\fe_\Phi(t;-2)}{\cL(1/t)}t^{\alpha}>0$.
			\item[(iii)] If $\mu=0$, then for all $s\ge0$, $t \ge 1$, and $x \in \bS^2$,
			\begin{equation}\label{eq:uniformuplow3}
			\sqrt{\frac{3}{\pi}}\frac{\underline{C} \cL(1/t)}{4}t^{-\alpha}\le \sup_{\substack{f \in H^s(\R^d) \\ \Norm{f}{H^s} \le 1}}\left|\E_x[f(W_\Phi^\mu(t))]-\frac{1}{4\pi}\int_{\bS^2}f(x)d\sigma(x)\right|,
			\end{equation}
			where $\underline{C}:=\inf_{t \ge 1}\frac{\fe_\Phi(t;-2)}{\cL(1/t)}t^{\alpha}>0$.
		\end{itemize}
\end{prop}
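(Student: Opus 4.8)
The plan is to reduce all three estimates to the spectral representation of Theorem~\ref{thm:specdecgen}. Writing $f=\sum_{\ell,m}f_{\ell,m}Y_{\ell,m}$, that theorem gives $\E_x[f(W^\mu_\Phi(t))]=\sum_{\ell=0}^{+\infty}\sum_{m=-\ell}^{\ell}f_{\ell,m}\fe_\Phi(t;i\mu m-\ell(\ell+1))Y_{\ell,m}(x)$; since $\fe_\Phi(t;0)=1$ and $Y_{0,0}\equiv(4\pi)^{-1/2}$, the term $\ell=m=0$ is exactly $f_{0,0}Y_{0,0}=\frac{1}{4\pi}\int_{\bS^2}f\,d\sigma$. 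Hence the quantity inside every supremum equals the tail $\sum_{\ell\ge1}\sum_{m=-\ell}^{\ell}f_{\ell,m}\fe_\Phi(t;i\mu m-\ell(\ell+1))Y_{\ell,m}(x)$, and the whole proposition is an estimate of this tail. Both constants are pinned down once by Lemma~\ref{lem:asbeh}: taking $\lambda=2$ shows that $t\mapsto\fe_\Phi(t;-2)\,t^{\alpha}/\cL(1/t)$ is continuous and strictly positive on $[1,+\infty)$ with a finite, strictly positive limit at $+\infty$, so its supremum $\overline{C}$ is finite and its infimum $\underline{C}$ is positive, and $\underline{C}\,\cL(1/t)t^{-\alpha}\le\fe_\Phi(t;-2)\le\overline{C}\,\cL(1/t)t^{-\alpha}$ for all $t\ge1$.

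For the upper bound (i) I would dominate the tail termwise. First, $|\fe_\Phi(t;i\mu m-\ell(\ell+1))|\le\E[e^{-\ell(\ell+1)L_\Phi(t)}]=\fe_\Phi(t;-\ell(\ell+1))\le\fe_\Phi(t;-2)$, the last inequality because $\lambda\mapsto\E[e^{-\lambda L_\Phi(t)}]$ is non-increasing and $\ell(\ell+1)\ge2$ for $\ell\ge1$. A Cauchy--Schwarz in $m$ combined with the addition formula \eqref{eq:additionthmspecial} bounds $\sum_{m}|f_{\ell,m}|\,|Y_{\ell,m}(x)|$ by $\sqrt{A_\ell(f)}\sqrt{(2\ell+1)/(4\pi)}$. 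Splitting $\sqrt{2\ell+1}\le\sqrt2\,\sqrt{\ell}+1$, so that $\sqrt{(2\ell+1)/(4\pi)}\le\frac{1}{\sqrt{2\pi}}\sqrt{\ell}+\frac{1}{\sqrt{4\pi}}$, and then applying Cauchy--Schwarz in $\ell$ against $\ell^{s}\sqrt{A_\ell(f)}$ produces the two series $\sum_{\ell\ge1}\ell^{1-2s}=\zeta(2s-1)$ and $\sum_{\ell\ge1}\ell^{-2s}=\zeta(2s)$; this is the only place where the hypothesis $s>1$ enters, since $\zeta(2s-1)<+\infty$ forces $s>1$. Bounding the smaller prefactor $1/\sqrt{4\pi}$ by $1/\sqrt{2\pi}$, using $[f]_{H^s(\bS^2)}\le\Norm{f}{H^s(\bS^2)}\le1$, and inserting $\fe_\Phi(t;-2)\le\overline{C}\,\cL(1/t)t^{-\alpha}$ yields \eqref{eq:uniformuplow}.

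The lower bounds (ii) and (iii) only require exhibiting one admissible test function, since a supremum dominates any of its entries. For (ii), I would take $f$ a scalar multiple of the single harmonic $Y_{1,0}=\sqrt{3/(4\pi)}\cos\theta$, normalized so that $\Norm{f}{H^s(\bS^2)}=1$; because $m=0$ the drift is invisible and the tail collapses to a single term, a multiple of $\fe_\Phi(t;-2)\cos\theta_x$. Using $\fe_\Phi(t;-2)\ge\underline{C}\,\cL(1/t)t^{-\alpha}$ together with $|\cos\theta_x|\ge\cos(\tfrac\pi2-\varepsilon)$, valid precisely when $\theta_x\notin(\tfrac\pi2-\varepsilon,\tfrac\pi2+\varepsilon)$, leads to \eqref{eq:uniformuplow2}. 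For (iii), when $\mu=0$ the three first--order harmonics share the eigenvalue $-2$, so I would align the test function with the base point by taking $f$ proportional to $\sum_{m=-1}^{1}\overline{Y_{1,m}(x)}\,Y_{1,m}$; after normalization the tail equals a positive constant times $\fe_\Phi(t;-2)$, the constant being produced uniformly in $x$ by the identity $\sum_{m=-1}^{1}|Y_{1,m}(x)|^2=3/(4\pi)$ from \eqref{eq:additionthmspecial}, which gives \eqref{eq:uniformuplow3}.

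The main obstacle is conceptual rather than computational: the clean $t^{-\alpha}\cL(1/t)$ rate in both directions hinges on transferring the scalar asymptotics of the single quantity $\fe_\Phi(t;-2)$ (Lemma~\ref{lem:asbeh}) to the entire spectral series. For the upper bound this works through the monotone domination $\fe_\Phi(t;-\ell(\ell+1))\le\fe_\Phi(t;-2)$, and for the lower bounds by isolating the slowest non-trivial mode $\ell=1$. The genuinely delicate point is the asymmetry between (ii) and (iii): for $\mu\neq0$ the harmonics $Y_{1,\pm1}$ carry the oscillatory factors $\fe_\Phi(t;\pm i\mu-2)$, whose phases cannot be controlled to yield a lower bound, so only $Y_{1,0}$ is available; since $Y_{1,0}$ vanishes on the equator $\theta_x=\pi/2$, no uniform lower bound can survive there, which is exactly the restriction appearing in (ii).
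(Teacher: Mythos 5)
Your argument for items (i) and (ii) is essentially the paper's own proof: the same reduction of the supremand to the tail $\ell\ge 1$ of the spectral series from Theorem \ref{thm:specdecgen}, the same termwise domination $|\fe_\Phi(t;i\mu m-\ell(\ell+1))|\le \fe_\Phi(t;-\ell(\ell+1))\le \fe_\Phi(t;-2)$, the same Cauchy--Schwarz-plus-addition-formula bound $\sum_m|f_{\ell,m}||Y_{\ell,m}(x)|\le \frac{1}{\sqrt{2\pi}}(\ell^{1/2}+1)\sqrt{A_\ell(f)}$ followed by the Lemma \ref{lem:seriesconv}-style Cauchy--Schwarz in $\ell$ producing $\sqrt{\zeta(2s-1)}+\sqrt{\zeta(2s)}$, the same extraction of $\overline{C}$ and $\underline{C}$ from Lemma \ref{lem:asbeh}, and the same test function $Y_{1,0}$ in (ii). Where you genuinely diverge is item (iii): the paper splits $\bS^2$ into polar caps (reusing item (ii) with $\varepsilon=\pi/4$) and the equatorial band $\theta_x\in[\pi/4,3\pi/4]$, where it takes $f=Y_{1,1}$ and uses $|\sin\theta_x|\ge \tfrac{\sqrt{2}}{2}$; you instead take the $\ell=1$ zonal function $\sum_{m=-1}^{1}\overline{Y_{1,m}(x)}Y_{1,m}$ aligned with the base point, which by \eqref{eq:additionthmspecial} produces $\frac{3}{4\pi}\fe_\Phi(t;-2)$ (times the normalizing constant) uniformly in $x$, with no case analysis. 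This is cleaner, and your closing observation --- that for $\mu\ne 0$ the phases of $\fe_\Phi(t;\pm i\mu-2)$ cannot be controlled from below, leaving only $Y_{1,0}$ available and hence no uniform bound on the equator --- is exactly the right explanation of the asymmetry between (ii) and (iii).

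One wrinkle you should be aware of. With the paper's norm $\Norm{f}{H^s(\bS^2)}=\Norm{f}{L^2(\bS^2)}+[f]_{H^s(\bS^2)}$ one has $\Norm{Y_{1,0}}{H^s(\bS^2)}=1+1=2$, so your admissible test function in (ii) is $\tfrac12 Y_{1,0}$, and your construction actually yields \eqref{eq:uniformuplow2} with an extra factor $\tfrac12$ in front of $\sqrt{3/(4\pi)}$, not the constant as stated. This is not a defect of your idea relative to the paper: the paper's own proof plugs in the unnormalized $Y_{1,0}$ (and $Y_{1,1}$ in (iii)), which do not lie in the unit ball, so its constants carry the same hidden factor $2$. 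You were simply more honest about normalization, and the price is a halved constant in (ii) --- immaterial for the qualitative content (uniform non-exponential lower bound, hence anomalous diffusion), but worth stating explicitly rather than claiming \eqref{eq:uniformuplow2} verbatim. Notably, your item (iii) does not suffer from this: since the zonal function $\sum_{m=-1}^{1}\overline{Y_{1,m}(x)}Y_{1,m}$ has $H^s$-norm $2\sqrt{3/(4\pi)}$, the properly normalized version gives $\left|\E_x[f(W^0_\Phi(t))]\right|=\tfrac14\sqrt{3/\pi}\,\fe_\Phi(t;-2)$ exactly, i.e.\ precisely the constant in \eqref{eq:uniformuplow3}, uniformly in $x$ --- whereas the paper's route, once its test functions are normalized, would only give $\tfrac18\sqrt{3/\pi}$ on the equatorial band. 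So your construction for (iii) is not just an alternative; it is the one that actually attains the stated constant under the stated constraint.
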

\begin{proof}
	Let us first prove item (i). It is not difficult to check that, by Lemma \ref{lem:asbeh} \begin{equation}\label{eq:uplow1}
		\overline{C}:=\sup_{t \ge 1}\frac{\fe_\Phi(t;-2)}{ \cL(1/t)}t^{\alpha}<\infty.
	\end{equation}
	 Consider any $f \in H^s(\R^d)$ with $\Norm{f}{H^s(\R^d)} \le 1$. Let $(f_{\ell,m})_{(\ell,m) \in \widetilde{\mathbb{Z}}}$ be its Fourier-Laplace coefficients and observe that, since $|\fe_\Phi(t;i\mu m-\ell(\ell+1))| \le \fe_{\Phi}(t;-\ell(\ell+1)) \le \fe_{\Phi}(t;-2)$, we have
	\begin{align*}
		&\left|\E_x[f(W_\Phi^\mu(t))]-\frac{1}{4\pi}\int_{\bS^2}f(x)d\sigma(x)\right|=\left|\sum_{\ell=1}^{+\infty}\sum_{m=-\ell}^{\ell}\fe_\Phi(t;im\mu-\ell(\ell+1))f_{\ell,m}Y_{\ell,m}(x)\right|\\
		&\le\sum_{\ell=1}^{+\infty}\fe_\Phi(t;-2)\sum_{m=-\ell}^{\ell}|f_{\ell,m}||Y_{\ell,m}(x)| \le\overline{C}t^{-\alpha}\cL(1/t)\sum_{\ell=1}^{+\infty}\sum_{m=-\ell}^{\ell}|f_{\ell,m}||Y_{\ell,m}(x)|.
	\end{align*}
	By the Cauchy-Schwarz inequality and the addition formula \eqref{eq:additionthmspecial}, for any $\ell \ge 1$, $t \ge 1$ and $x \in \bS^2$
	\begin{equation*}
		\sum_{m=-\ell}^{\ell}|f_{\ell,m}Y_{\ell,m}(x)|\le \frac{1}{\sqrt{2\pi}} (\ell^{\frac{1}{2}}+1)\sqrt{A_\ell(f)}.
	\end{equation*}
	Proceeding as in the proof of Lemma \ref{lem:seriesconv} and recalling that $\Norm{f}{H^s(\bS^2)} \le 1$, we finally achieve
	\begin{align*}
		\left|\E_x[f(W_\Phi^\mu(t))]-\frac{1}{4\pi}\int_{\bS^2}f(x)d\sigma(x)\right|&\le\frac{\overline{C}(\sqrt{\zeta(2s-1)}+\sqrt{\zeta(2s)})}{\sqrt{2\pi}}t^{-\alpha}\cL(1/t),
	\end{align*}
	where $\zeta$ is the Riemann Zeta function.
	Now we prove Item (ii).	First, observe that
	\begin{equation}\label{eq:uplow}
		\underline{C}:=\inf_{t \ge 1}\frac{\fe_\Phi(t;-2)}{\cL(1/t)}t^{\alpha}>0.
	\end{equation}
	 Fix $\varepsilon \in \left(0,\frac{\pi}{2}\right)$ and consider $x=(\theta_x,\varphi_x)$ such that $\theta_x \not \in \left(\frac{\pi}{2}-\varepsilon,\frac{\pi}{2}+\varepsilon\right)$. It is sufficient to prove the lower bound in \eqref{eq:uniformuplow2} for a test function $f \in H^s(\R^d)$ with $\Norm{f}{H^s(\R^d)} \le 1$. We consider $f=Y_{1,0}$, so that $\frac{1}{4\pi}\int_{\bS^2}f(x)d\sigma(x)=0$, and then, by Theorem \ref{thm:specdecgen} and \eqref{eq:Ylm},
	\begin{equation*}
		\left|\E_x[f(W_\Phi^\mu(t))]\right|=\sqrt{\frac{3}{4\pi}}|\fe_\Phi(t;-2)||\cos(\theta_x)| \ge \sqrt{\frac{3}{4\pi}}\underline{C} \cL(1/t)t^{-\alpha}\cos\left(\frac{\pi}{2}-\varepsilon\right).
	\end{equation*}
	Finally, we prove item (iii). Observe that item (ii) still holds with $\varepsilon=\frac{\pi}{4}$. Now consider $x=(\theta_x,\varphi_x)$ with $\theta_x \in \left[\frac{\pi}{4},\frac{3\pi}{4}\right]$. As test function we consider $f=Y_{1,1} \in H^s(\R^d)$, so that $\frac{1}{4\pi}\int_{\bS^2}f(x)d\sigma(s)=0$ and, by Theorem \ref{thm:specdecgen} and \eqref{eq:Ylm}, since $\mu=0$,
	\begin{equation*}
		\left|\E_x[f(W_\Phi^\mu(t))]\right|=\sqrt{\frac{3}{8\pi}}|\fe_\Phi(t;-2)||e^{i\varphi_x}||\sin(\theta_x)| \ge \sqrt{\frac{3}{\pi}}\frac{\underline{C} \cL(1/t)}{4}t^{-\alpha}.
	\end{equation*}
\end{proof}
\begin{rmk}
Let us stress that the set $\{f \in H^s(\R^d):\, \Norm{f}{H^s(\R^d)} \le 1\}$ is separating for $\mathcal{P}(\bS^2)$. Indeed, consider $\mathfrak{P}_1,\mathfrak{P}_2 \in \mathcal{P}(\bS^2)$ and observe that if $\int_{\bS^2} f d\mathfrak{P}_1=\int_{\bS^2} f d\mathfrak{P}_2$ for all $f \in H^s(\R^d)$ with $\Norm{f}{H^s(\R^d)} \le 1$, then the same relation holds for any $f\in C^\infty(\bS^2)$ and then for any indicator function by dominated convergence, which implies $\mathfrak{P}_1=\mathfrak{P}_2$. Hence the left hand side of \eqref{eq:uniformuplow} is a distance that metricizes the weak convergence in $\mathcal{P}(\bS^2)$.
As a consequence, the previous proposition tells us that, in the case $\Phi(\lambda)=\lambda^\alpha \cL(\lambda)$ and $\mu=0$, we have a uniform non-exponential upper and lower bound on the speed of convergence to the stationary state, that implies the anomalous diffusive behaviour of $W_\Phi^0$. For $\mu \not =0$, we still have an anomalous diffusive behaviour but we are not able to provide a uniform lower bound if the initial position is exactly on the equator.
\end{rmk}

\subsection{The time-nonlocal Kolmogorov equation: very weak solutions}\label{subs:weak}
Recall that, in Theorem \ref{thm:specdecgen}, we have shown that the family of operators $\{T_t^{\Phi}\}_{t \ge 0}$ acts on $L^2(\bS^2)$, while, for $f \in H^s(\bS^2)$, $s>1$, $u(t)=T_t^\Phi f$ is the strong solution of \eqref{eq:nonlocbKq}. It remains to investigate the connection between $u(t)$ and \eqref{eq:nonlocbKq} when $f\in L^2(\bS^2)$. In order to do this, we use  the notion of very weak solution as introduced in \cite{Chen2018}.
\begin{thm}\label{thm:WspecdecKolm}
	Assume that $f \in L^2(\bS^2)$ and
	\begin{equation*}
		f=\sum_{\ell=0}^{+\infty}\sum_{m=-\ell}^{\ell}f_{\ell,m}Y_{\ell,m}.
	\end{equation*}
	Then the time-nonlocal partial differential equation \eqref{eq:nonlocbKq} admits a unique Laplace transformable very weak solution $u$, in the sense that:
	\begin{itemize}
		\item[(i)] $u \in C(\R_0^+;L^2(\bS^2))$;
		\item[(ii)]  for almost all $x\in\bS^2$ there exists $\lambda_0\ge0$ such that for all $\lambda>\lambda_0$ it holds
		$$
		\int_0^\infty e^{-\lambda t}|u(t,x)|dt <\infty;
		$$
		\item[(iii)] for all $h \in H^2(\bS^2)$ it holds
		\begin{equation}\label{eq:weaksol}
			\der{}{t}\int_{\bS^2} h(x)(\overline{\nu}_\Phi \ast (u(\cdot,x)-f(x)))(t)d\sigma(x)=\int_{\bS^2} u(t,x) \mathcal{G}_{-\mu}h(x) d\sigma(x).
		\end{equation}
	\end{itemize}
	In particular, such a solution belongs to $C(\R^+;H^2(\bS^2))$ and is given by
	\begin{equation}\label{eq:WseriesunlbKe}
		u(t)=\sum_{\ell=0}^{+\infty}\sum_{m=-\ell}^{\ell}f_{\ell,m}\fe_\Phi(t;i\mu m-\ell(\ell+1))Y_{\ell,m},
	\end{equation}
	where the equality holds in $H^2(\bS^2)$.
\end{thm}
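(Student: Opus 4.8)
The plan is to take $u$ to be the series in \eqref{eq:WseriesunlbKe}, which is precisely the function $T^\Phi_t f$ of Theorem \ref{thm:specdecgen}. That theorem already yields $u\in C(\R_0^+;L^2(\bS^2))\cap C(\R^+;H^2(\bS^2))$, so item (i) and the claimed $H^2$-regularity come for free, together with convergence of \eqref{eq:WseriesunlbKe} in $H^2(\bS^2)$ for $t>0$; only (ii), (iii) and uniqueness remain. For the Laplace transformability (ii) I would argue globally and then localize: since $\Norm{u(t)}{L^2(\bS^2)}\le \Norm{f}{L^2(\bS^2)}$ for every $t\ge0$ and $\sigma(\bS^2)=4\pi$, Cauchy--Schwarz gives $\Norm{u(t)}{L^1(\bS^2)}\le \sqrt{4\pi}\,\Norm{f}{L^2(\bS^2)}$, whence Tonelli's theorem yields, for any $\lambda>0$,
\begin{equation*}
\int_{\bS^2}\left(\int_0^{\infty}e^{-\lambda t}|u(t,x)|\,dt\right)d\sigma(x)=\int_0^\infty e^{-\lambda t}\Norm{u(t)}{L^1(\bS^2)}\,dt\le \frac{\sqrt{4\pi}\,\Norm{f}{L^2(\bS^2)}}{\lambda}<+\infty,
\end{equation*}
so the inner integral is finite for $\sigma$-a.e.\ $x$ and every $\lambda>0$, which is (ii) with $\lambda_0=0$.

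It then remains to verify the weak identity \eqref{eq:weaksol}. Fix $h\in H^2(\bS^2)$ and set $\psi(\tau):=\int_{\bS^2}h(x)(u(\tau,x)-f(x))\,d\sigma(x)$, a scalar function with $\psi(0)=0$. By Fubini (the integrand being dominated by $2\Norm{h}{L^2(\bS^2)}\Norm{f}{L^2(\bS^2)}\,\overline{\nu}_\Phi(t-\cdot)$) the left-hand side of \eqref{eq:weaksol} equals $\der{}{t}(\overline{\nu}_\Phi\ast\psi)(t)=\dersup{\psi}{t}{\Phi}(t)$. Expanding $u$ by \eqref{eq:WseriesunlbKe} and using $\int_{\bS^2}hY_{\ell,m}\,d\sigma=(-1)^m h_{\ell,-m}$, I would write $\psi$ as $\sum_{\ell,m}(-1)^m f_{\ell,m}h_{\ell,-m}(\fe_\Phi(\cdot;\lambda_{\ell,m})-1)$ with $\lambda_{\ell,m}=i\mu m-\ell(\ell+1)$. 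The decisive computation, already carried out in the proof of Theorem \ref{thm:specdecKolm}, is $\der{}{t}\fE_\Phi(t;\lambda)=\dersup{\fe_\Phi(\cdot;\lambda)}{t}{\Phi}(t)=\lambda\fe_\Phi(t;\lambda)$; to differentiate $\overline{\nu}_\Phi\ast\psi$ term by term I would invoke \cite[Theorem 7.17]{rudin1976principles}, whose hypothesis is the uniform-in-$t$ convergence of the differentiated series. Using $|\lambda_{\ell,m}|\le C\ell^2$, $|\fe_\Phi|\le1$ and Cauchy--Schwarz,
\begin{equation*}
\sum_{\ell,m}\ell^2|f_{\ell,m}||h_{\ell,-m}|\le C\,\Norm{f}{L^2(\bS^2)}\left(\sum_{\ell=0}^{+\infty}\ell^4A_\ell(h)\right)^{1/2}<+\infty,
\end{equation*}
the last factor being finite exactly because $h\in H^2(\bS^2)$, cf.\ \eqref{eq:H2cond2}; this gives $\dersup{\psi}{t}{\Phi}(t)=\sum_{\ell,m}(-1)^m f_{\ell,m}h_{\ell,-m}\lambda_{\ell,m}\fe_\Phi(t;\lambda_{\ell,m})$. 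For the right-hand side of \eqref{eq:weaksol} I would use that $u(t)\in H^2(\bS^2)$ for $t>0$, together with the bilinear Green identity \eqref{eq:selfadjoint} and the skew-adjointness of $\partial_\varphi$ with respect to $d\sigma=\sin(\theta)\,d\theta\,d\varphi$, to obtain the adjoint relation $\int_{\bS^2}u(t)\,\cG_{-\mu}h\,d\sigma=\int_{\bS^2}(\cG_\mu u(t))\,h\,d\sigma$; expanding $\cG_\mu u(t)=\sum_{\ell,m}f_{\ell,m}\lambda_{\ell,m}\fe_\Phi(t;\lambda_{\ell,m})Y_{\ell,m}$ and pairing against $h$ reproduces the very same series, establishing \eqref{eq:weaksol}.

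Finally, for uniqueness I would reduce by linearity to $f=0$ and test \eqref{eq:weaksol} against $h=\overline{Y}_{\ell,m}\in H^2(\bS^2)$. Since $\overline{Y}_{\ell,m}=(-1)^mY_{\ell,-m}$ and, by Theorem \ref{thm:specG}, $\cG_{-\mu}Y_{\ell,-m}=(i(-\mu)(-m)-\ell(\ell+1))Y_{\ell,-m}=\lambda_{\ell,m}Y_{\ell,-m}$, we have $\cG_{-\mu}\overline{Y}_{\ell,m}=\lambda_{\ell,m}\overline{Y}_{\ell,m}$, so the coefficient $u_{\ell,m}(t):=\int_{\bS^2}u(t,x)\overline{Y}_{\ell,m}(x)\,d\sigma(x)$ satisfies, because $u_{\ell,m}(0)=f_{\ell,m}=0$, the scalar homogeneous growth equation $\dersup{u_{\ell,m}}{t}{\Phi}(t)=\lambda_{\ell,m}u_{\ell,m}(t)$ with $u_{\ell,m}(0)=0$. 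Rewriting it as the renewal identity $(\overline{\nu}_\Phi\ast u_{\ell,m})(t)=\lambda_{\ell,m}\int_0^t u_{\ell,m}(s)\,ds$ and taking Laplace transforms gives $(\Phi(s)-\lambda_{\ell,m})\widetilde{u}_{\ell,m}(s)=0$, which is nonzero in the first factor for $s>0$ (as $\Re(\Phi(s)-\lambda_{\ell,m})=\Phi(s)+\ell(\ell+1)>0$), whence $u_{\ell,m}\equiv0$ by injectivity of the Laplace transform and continuity; since $u\in C(\R_0^+;L^2(\bS^2))$, this forces $u\equiv0$.

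I expect the main obstacle to lie precisely in this last step: one must check that the scalar coefficients $u_{\ell,m}$ of an \emph{arbitrary} Laplace transformable very weak solution are themselves Laplace transformable and solve the scalar growth equation in the sense of \cite{Chen2018}, so that its uniqueness (the homogeneous counterpart of Proposition \ref{prop:exp}) may be applied; passing from the $x$-pointwise transformability in (ii) to transformability of the pairing $t\mapsto\int_{\bS^2}u(t)\overline{Y}_{\ell,m}\,d\sigma$ is the delicate point, and is where the precise definition of \cite{Chen2018} is needed. By comparison, the verification of (iii) for the explicit series is a lengthy but routine interchange argument, underwritten throughout by the $H^2$-regularity of the test function $h$.
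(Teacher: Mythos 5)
Your construction and the proofs of item (i), item (ii) (with $\lambda_0=0$, via the same Tonelli/non-expansiveness bound) and the $C(\R^+;H^2(\bS^2))$ regularity coincide with the paper's, both resting on Theorem \ref{thm:specdecgen}. For item (iii), however, you take a genuinely different route. The paper follows Chen's method: it writes $u(t,x)=\E[T_{L_\Phi(t)}f(x)]$, inserts the density $f_L(\tau,s)$ of the inverse subordinator, uses that $T_\tau f$ is a strong solution of the classical Kolmogorov equation to replace $T_\tau f-f$ by $\int_0^\tau \cG_\mu T_v f\,dv$, moves $\cG_\mu$ onto $h$ by adjointness, and then identifies the two sides of \eqref{eq:weaksol} through their Laplace transforms in $t$ and injectivity of the Laplace transform. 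You instead expand the pairing $\psi(t)=\int_{\bS^2}h\,(u(t,\cdot)-f)\,d\sigma$ in spherical harmonics and differentiate term by term via Proposition \ref{prop:exp} and the termwise-differentiation theorem, the hypothesis $h\in H^2(\bS^2)$ entering exactly through $\sum_\ell \ell^4 A_\ell(h)<+\infty$ to give uniform convergence of the differentiated series; the right-hand side is matched by the bilinear adjointness $\int u(t)\,\cG_{-\mu}h\,d\sigma=\int (\cG_\mu u(t))\,h\,d\sigma$ (legitimate since $u(t)\in H^2(\bS^2)$ for $t>0$). This verification is correct, more elementary, and fully self-contained; the paper's argument is the one that survives in settings where no explicit spectral decomposition is available.

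The genuine gap is in your uniqueness argument, and you flagged it yourself: to take Laplace transforms of the renewal identity you need $t\mapsto u_{\ell,m}(t)$ to be Laplace transformable, and this does \emph{not} follow from item (ii), which is pointwise in $x$ with an $x$-dependent abscissa $\lambda_0(x)$ and carries no integrability in $x$ of the transform, so Tonelli cannot produce a finite bound for $\int_0^\infty e^{-\lambda t}|u_{\ell,m}(t)|\,dt$ from it. As written, the argument does not close; the paper sidesteps the issue by deferring uniqueness wholesale to \cite[Theorem 2.4]{Chen2018}. Note, though, that your own reduction already permits a finish without Laplace transforms, within the paper's toolbox: testing \eqref{eq:weaksol} against $h=\overline{Y}_{\ell,m}$ (Fubini being justified because $\Norm{u(t)}{L^2(\bS^2)}$ is locally bounded and $\overline{\nu}_\Phi$ is locally integrable) shows that the continuous function $u_{\ell,m}$, with $u_{\ell,m}(0)=0$, satisfies $\dersup{u_{\ell,m}}{t}{\Phi}=\lambda_{\ell,m}u_{\ell,m}$ exactly in the sense of Definition \ref{defn:strongder}. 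Hence $v:=\fe_\Phi(\cdot;\lambda_{\ell,m})+u_{\ell,m}$ solves the growth equation \eqref{eq:growth}, and Proposition \ref{prop:exp} --- the same unconditional uniqueness statement the paper invokes for the coefficients in the uniqueness part of Theorem \ref{thm:specdecKolm} --- forces $v=\fe_\Phi(\cdot;\lambda_{\ell,m})$, i.e.\ $u_{\ell,m}\equiv 0$, and then $u\equiv 0$ by continuity in $L^2(\bS^2)$. Replacing your Laplace-transform step by this linearity argument repairs the proof.
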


\begin{proof}
	We proceed as in the proof of \cite[Theorem 2.4]{Chen2018}. First of all, observe that, by Theorem \ref{thm:specdecgen}, the function $u$ defined in \eqref{eq:WseriesunlbKe} can be rewritten as $u(t,x)=\E_x[f(W_\Phi^\mu(t))]=\E[T_{L_\Phi(t)}f(x)]$, where $T_t$ is the semigroup generated by $W^\mu(t)$. The fact that $u\in C(\R_0^+;L^2(\bS^2))\cap C(\R^+;H^2(\bS^2))$ follows by Theorem \ref{thm:specdecgen}. 
We notice that, by Minkowski generalized inequality,  
\begin{align*}
\sqrt{\int_{\bS^2} \left(\int_0^t \overline{\nu}_\Phi(t-s)|u(s,x)-f(x)|\,ds\right)^2 d\sigma(x)}
\le 2 \max_{0\le s\le t} \Norm{u(s)}{L^2(\bS^2)} \int_0^t \overline{\nu}_\Phi(s)\,ds<\infty,
\end{align*}
hence $(\overline{\nu}_\Phi \ast (u(\cdot,x)-f(x)))(t)$ belongs to $L^2(\bS^2)$ and is absolutely convergent for almost all $x\in\bS^2$.
Next, we have
	\begin{align}
		&\int_{\bS^2} h(x)(\overline{\nu}_\Phi \ast (u(\cdot,x)-f(x)))(t)d\sigma(x)=  \int_{\bS^2} \int_0^t h(x) \overline{\nu}_\Phi (t-s) (u(s,x)-f(x)) ds d\sigma(x)\notag\\
		&=\int_{\bS^2} \int_0^t \int_0^\infty h(x) \overline{\nu}_\Phi (t-s) (T_\tau f(x)-f(x)) f_L(\tau, s) d\tau\, ds \, d\sigma(x).\label{eq:Wint1}
	\end{align}
	Notice that
	\begin{align*}
		& \int_0^t \int_0^\infty \int_{\bS^2} |h(x)| \overline{\nu}_\Phi (t-s) |T_\tau f(x)-f(x)| f_L(\tau, s) \,d\sigma(x) \, d\tau\, ds \\
		&\le \Norm{h}{L^2(\bS^2)}  \int_0^t \int_0^\infty \overline{\nu}_\Phi (t-s) \Norm{T_\tau f-f}{L^2(\bS^2)} f_L(\tau, s) \, d\tau\, ds\\
		&\le 2\Norm{h}{L^2(\bS^2)}  \Norm{f}{L^2(\bS^2)} \int_0^t \overline{\nu}_\Phi (s) \, ds<\infty,
	\end{align*}
	where we used also Lemma \ref{lem:extendLp}. Hence we can use Fubini's Theorem in \eqref{eq:Wint1} to get 
	\begin{align}
		&\int_{\bS^2} h(x)(\overline{\nu}_\Phi \ast (u(\cdot,x)-f(x)))(t)d\sigma(x)\notag \\
		&=\int_0^t \int_0^\infty \overline{\nu}_\Phi (t-s) f_L(\tau, s) \int_{\bS^2} h(x)  (T_\tau f(x)-f(x))  \, d\sigma(x) \,d\tau\, ds \notag \\
		&=\int_0^t \int_0^\infty \overline{\nu}_\Phi (t-s) f_L(\tau, s) \int_{\bS^2} \int_0^\tau  h(x)  \cG_\mu T_{v}f(x) dv  \, d\sigma(x) \,d\tau\, ds, \label{eq:Wint2}
	\end{align}
	where we also use the fact that, by Theorem \ref{eq:sKolmogorov}, the function $T_t f$ is a strong solution of \eqref{eq:sBK}.
	Let us work separately with the inner integrals. Observe that
	\begin{align*}
		&\int_0^\tau \int_{\bS^2}   |h(x)|  |\cG_\mu T_{v}f(x)|  \, d\sigma(x)\, dv\le \Norm{h}{L^2(\bS^2)}\int_0^\tau \Norm{T_{v}f}{H^2(\bS^2)} dv\\
		&\le\sqrt{\tau}\Norm{h}{L^2(\bS^2)}\sqrt{\int_0^\tau \sum_{\ell=0}^{\infty} \ell^2 e^{-2\ell(\ell+1)v}A_\ell(f) dv}
		\le \sqrt{\frac{\tau}{2}}\, \Norm{h}{L^2(\bS^2)}\Norm{f}{L^2(\bS^2)} <\infty\,,
	\end{align*}
	hence we can use again Fubini's Theorem to get 
	\begin{align*}
		\int_{\bS^2} \int_0^\tau  h(x)  \cG_\mu T_{v}f(x) dv  \, d\sigma(x)
		= \int_0^\tau \int_{\bS^2}  \cG_{-\mu} h(x)   T_{v}f(x)   \, d\sigma(x)\, dv\,.
	\end{align*}
	Combining this with \eqref{eq:Wint2}, we have
	\begin{align}
		&\int_{\bS^2} h(x)(\overline{\nu}_\Phi \ast (u(\cdot,x)-f(x)))(t)d\sigma(x)\notag \\
		&=\int_0^t \int_0^\infty \overline{\nu}_\Phi (t-s) f_L(\tau, s) \int_0^\tau \int_{\bS^2}  \cG_{-\mu} h(x)   T_{v}f(x) dv  \, d\tau \,d\sigma(x)\, ds. \label{eq:Wint3}
	\end{align}
	Before proceeding let us show that item (ii) holds with $\lambda_0=0$. Indeed, for any $\lambda >0$
	\begin{align*}
		\int_{\bS^2}\int_0^\infty e^{-\lambda t} |u(t,x)| dt d\sigma(x)\le \int_0^\infty  e^{-\lambda t} \Norm{u(t)}{L^2(\bS^2)} dt\le \frac1\lambda  \Norm{f}{L^2(\bS^2)}
	\end{align*}
	Furthermore, notice that 
	\begin{align}
		\int_0^\infty e^{-\lambda t} \int_{\bS^2}  \cG_{-\mu} h(x) u(t,x) \,d\sigma(x)\, dt= \frac{\Phi(\lambda )}\lambda  \int_{\bS^2} \cG_{-\mu} h(x) \int_0^\infty e^{-s\Phi(\lambda )} T_s f(x) \,ds\,d\sigma(x)\,,\label{eq:Wint4}
	\end{align}
	where we used \eqref{eq:Laplace} and the latter integral is absolutely convergent, since $\Phi(\lambda )>0$. On the other hand, by \cite[Proposition 1.6.4]{arendt2011vector}, \eqref{eq:Laplacenu} and \eqref{eq:Laplace}, 
	\begin{align}
		&\int_0^\infty e^{-\lambda t}\int_0^t  \overline{\nu}_\Phi (t-s) \int_0^\infty f_L(\tau, s) \int_0^\tau \int_{\bS^2}  \cG_{-\mu} h(x)   T_{v}f(x) dv  \, d\tau \,d\sigma(x)\, ds\,dt\notag\\
		&=\frac{\Phi(\lambda )}\lambda  \int_0^\infty \int_0^\infty e^{-\lambda t} f_L(\tau,t) \int_0^\tau \int_{\bS^2}  \cG_{-\mu} h(x)   T_{v}f(x) dv  \, d\tau \,d\sigma(x)\,dt \notag\\
		&=\frac{\Phi(\lambda )^2}{\lambda ^2}  \int_0^\infty e^{-\tau \Phi(\lambda )}  \int_0^\tau \int_{\bS^2}  \cG_{-\mu} h(x)   T_{v}f(x) dv  \,d\sigma(x)\, d\tau \notag \\
		&=\frac{\Phi(\lambda )^2}{\lambda ^2}  \int_0^\infty \int_{\bS^2} \cG_{-\mu} h(x)   T_{v}f(x) \int_v^\infty e^{-\tau \Phi(\lambda )} d\tau \,d\sigma(x)\,  dv \notag \\
		&=\frac{\Phi(\lambda )}{\lambda ^2}  \int_0^\infty \int_{\bS^2} \cG_{-\mu} h(x)   T_{v}f(x) e^{-v \Phi(\lambda )} \,d\sigma(x)\,  dv. \label{eq:Wint5}
	\end{align}
	Comparing \eqref{eq:Wint4} and \eqref{eq:Wint5}, by \cite[Corollary 1.6.5]{arendt2011vector} and the injectivity of the Laplace transform we achieve in \eqref{eq:Wint3}
	\begin{align*}
		&\int_{\bS^2} h(x)(\overline{\nu}_\Phi \ast (u(\cdot,x)-f(x)))(t)d\sigma(x)=\int_0^t\int_{\bS^2}  \cG_{-\mu} h(x) u(s,x) \,d\sigma(x)\,ds\,.
	\end{align*}
	Differentiating both sides of the equality, we get item (iii). Finally, uniqueness follows as in \cite[Theorem 2.4]{Chen2018}.
\end{proof}

\begin{rmk}
We underline that uniqueness holds even if we do not require the solution to be regular. Indeed, in item (i) we only require $u(t)\in L^2(\bS^2)$, that is enough to guarantee the well-posedness of the right hand side of item (iv). This is different from the classical notion of weak solution (see \cite[Chapter 7]{evans2022}) in which it is required that $u(t)\in H^1(\bS^2)$. For this reason, we use the name \emph{very weak solution} (see \cite{kinnunen2002} and references therein) in place of just weak solution.
\end{rmk}

\appendix

\section{Proof of Corollary \ref{cor:transition_density_standard}} \label{appCOR}

	Fix any $A \in \cB(\bS^2)$ and consider the indicator function $1_{A} \in L^2(\bS^2)$. By Theorem \ref{eq:sKolmogorov}, the solution of \eqref{eq:sBK} in $C^1(\R^+; L^2(\bS^2))\cap C(\R_0^+;L^2(\bS^2))$ is given by
	\begin{equation*}
		u(t)=\sum_{\ell=0}^{+\infty}\sum_{m=-\ell}^{\ell}f_{\ell,m}e^{(i\mu m-\ell(\ell+1))t}Y_{\ell,m}, \quad \text{where} \quad 
		f_{\ell,m}=\int_{A}\overline{Y}_{\ell,m}d\sigma.
	\end{equation*}
	Now let us observe that the series
	\begin{equation*}
		\sum_{\ell=0}^{+\infty}\sum_{m=-\ell}^{\ell}e^{(i\mu m-\ell(\ell+1))t}Y_{\ell,m}(x)\overline{Y}_{\ell,m}(y)
	\end{equation*}
	is uniformly convergent for $x,y \in \bS^2$ and $t \ge t_0$ for any $t_0>0$, by using the upper bound \eqref{eq:uniformbound}. In particular, this implies that
	\begin{align}\label{eq:transdens1}
		\begin{split}
		u(t,x)&=\int_{A}\left(\sum_{\ell=0}^{+\infty}\sum_{m=-\ell}^{\ell}e^{(i\mu m-\ell(\ell+1))t}Y_{\ell,m}(x)\overline{Y}_{\ell,m}(y)\right)d\sigma(y)\\
		&:=\int_{A}p_\mu(t,y|x)d\sigma(x).
	\end{split}
	\end{align}
	On the other hand, since $\cG_\mu$ is the generator of $W^\mu(t)$, it holds
	\begin{equation}\label{eq:transdens2}
		u(t,x)=\E_x[1_A(W^\mu(t))]=\bP_x(W^\mu(t) \in A).
	\end{equation}
	Combining \eqref{eq:transdens1} and \eqref{eq:transdens2} we end the proof of identities \eqref{eq:density} and \eqref{eq:specdensity}.
	The equality \eqref{eq:adjointdensity} follows easily from \eqref{eq:specdensity} and \eqref{eq:additiondrift}.
\qed

\section{Proof of Lemma \ref{lem:extendLp}}\label{app:lemext}
Define the linear operator $T^*_t:C(\bS^2) \to \mathcal{M}(\bS^2)$ (where $\mathcal{M}(\bS^2)$ is the space of Radon measures on $\bS^2$) such that
	\begin{equation*}
		\int_{\bS^2} (T_tu) \, v \,  d\sigma=\int_{\bS^2} u \, d(T_t^*v), \ \forall u,v \in C(\bS^2).
	\end{equation*}
Let us identify $T^*_t$. To do this, recall that, by Corollary \ref{cor:transition_density_standard}, it holds
	\begin{align*}
		\int_{\bS^2} T_tu(x) \, v(x) \,  d\sigma(x)&=\int_{\bS^2} \left(\int_{\bS^2}p_\mu(t,y|x)u(y)d\sigma(y)\right) \, v(x) \,  d\sigma(x)\\
		&=\int_{\bS^2} \left(\int_{\bS^2}p_{-\mu}(t,x|y)v(x)d\sigma(x)\right) \, u(y) \,  d\sigma(y),
	\end{align*}
	where we used Fubini's theorem since
	\begin{equation*}
		\int_{\bS^2} \left(\int_{\bS^2}p_\mu(t,y|x)|u(y)|d\sigma(y)\right) \, |v(x)| \,  d\sigma(x) \le 4\pi \Norm{u}{L^\infty(\bS^2)}\Norm{v}{L^\infty(\bS^2)}.
	\end{equation*}
	This implies
	\begin{equation*}
		T^*_t v(x)=\int_{\bS^2}p_{-\mu}(t,x|y)v(y)d\sigma(y)=\E_x[v(W^{-\mu}(t))].
	\end{equation*}
	Let us show that $T^*_t$ can be extended to a sub-Markov operator $T^{\circledast}_t:L^1(\bS^2) \to L^1(\bS^2)$. Indeed, if we let $v \in C(\bS^2)$, it holds
	\begin{align*}
		\Norm{T^*_t v}{L^1(\bS^2)}&=\sup_{\substack{u \in C(\bS^2) \\ |u| \le 1}}\left|\int_{\bS^2}T^*_t v(x)u(x)d\sigma(x)\right|\\
		&=\sup_{\substack{u \in C(\bS^2) \\ |u| \le 1}}\int_{\bS^2} |v(x)||T_tu(x)|d\sigma(x)\le \Norm{v}{L^1(\bS^2)},
	\end{align*}
	hence we can extend $T^*_t$ to a bounded linear operator $T^{\circledast}_t:L^1(\bS^2) \to L^1(\bS^2)$. Furthermore, for any $v \in L^1(\bS^2)$ with $0 \le v \le 1$ there exists a sequence $v_n \in C^\infty(\bS^2)$ of functions such that $0 \le v_n \le 1$ and $v_n \to v$ in $L^1(\bS^2)$. In particular, by the continuity of $T_t^{\circledast}$, we have $T^{\circledast}_t v_n \to T^{\circledast}_t v$ in $L^1(\bS^2)$. Thus there exists a non relabelled subsequence $v_n$ such that
	\begin{align*}
		\lim_n T_t^{\circledast}v_n(x)= T_t^{\circledast}v(x), \qquad \lim_n v_n(x)=v(x)
	\end{align*}
	for almost any $x \in \bS^2$. Since $0 \le v_n \le 1$, we can use the dominated convergence theorem to achieve
	\begin{equation*}
		T^{\circledast}_tv(x)=\lim_{n \to +\infty}T^{\circledast}_tv_n(x)=\lim_{n \to +\infty}\E_x[v_n(W^{-\mu}(t))]=\E_x[v(W^{-\mu}(t))],
	\end{equation*}
	for almost any $x \in \bS^2$. This clearly implies that $0 \le T^{\circledast}_tv\le 1$. Finally, \cite[Lemma 1.45]{bottcher2013levy} tells us that $T_t$ admits the desired extension.
	
	Next, with the same arguments we adopted for $T^{\circledast}_t$, one can prove that for $f \in L^1(\bS^2) \cap L^\infty(\bS^2)$ it holds
	\begin{equation*}
		T_tf(x)=\E_x[f(W(t))], \ \forall t \ge 0, \ \mbox{ for almost any } x \in \bS^2.
	\end{equation*}
	Moreover, for $f \in L^1(\bS^2)$ with $f \ge 0$ let us consider the sequence $f_n=f \wedge n$, $n \in \N$. Then $f_n \uparrow f$ in $L^1(\bS^2)$ and thus $\lim_{n \to +\infty}T_tf_n=T_tf$.  Let us consider a subsequence, that we still denote by $f_n$, such that $\lim_{n \to +\infty}T_tf_n(x)=T_tf(x)$ for almost any $x \in \bS^2$. Observing that $f_n \in L^\infty(\bS^2)$ with $f_n \ge 0$, we have, by the monotone convergence theorem,
	\begin{equation*}
		T_t f(x)=\lim_{n \to +\infty}T_t f_n(x)=\lim_{n \to +\infty}\E_x[f_n(W(t))]=\E_x[f(W(t))]
	\end{equation*}
	for almost any $x \in \bS^2$. Next, if $f \in L^1(\bS^2)$ is real-valued, one can observe that
	\begin{equation*}
		\E_x[|f(W(t))|]=T_t|f|(x)<+\infty, \ \forall t \ge 0, \ \mbox{ for almost any }x \in \bS^2,
	\end{equation*}
	since $|f| \in L^1(\bS^2)$ and $T_t:L^1(\bS^2) \to L^1(\bS^2)$. Hence, once we write $f=f_+-f_-$, where $f_+=f \wedge 0$ and $f_-=-(f \vee 0)$, it holds
	\begin{equation*}
		\E_x[f(W(t))]=\E_x[f_+(W(t))]-\E_x[f_-(W(t))]=T_tf_+(x)-T_tf_-(x)=T_tf(x)
	\end{equation*}
	for almost any $x \in \bS^2$ and any fixed $t \ge 0$. Finally, if $f \in L^1(\bS^2)$ is complex-valued, the same argument holds separately for the real and imaginary parts. Taking the sum and using the linearity of $T_t$ we conclude the proof.
\qed
 
\bibliographystyle{plain}
\bibliography{biblio2}
\end{document}